\let\OLDthebibliography\thebibliography
\renewcommand\thebibliography[1]{
  \OLDthebibliography{#1}
  \setlength{\parskip}{0pt}
  \setlength{\itemsep}{0pt plus 0.3ex}
}
\numberwithin{equation}{section}
\newtheorem{theorem}{Theorem}[section]
\newtheorem{corollary}[theorem]{Corollary}
\newtheorem{lemma}[theorem]{Lemma}
\newtheorem{remark}[theorem]{Remark}
\newtheorem{problem}[theorem]{Problem}
\newtheorem{conjecture}[theorem]{Conjecture}
\newtheorem{proposition}[theorem]{Proposition}
\def\R{\mathbb{R}}
\def\T{\mathrm{T}}
\def\Q{\mathbb{Q}}
\def\Z{\mathbb{Z}}
\def\N{\mathbb{N}}
\def\I{\mathbb{I}}
\def\cH{\mathcal{H}}
\def\cB{\mathcal{B}}
\def\cM{\mathcal{M}}
\def\ccM{\tilde{\mathcal{M}}}
\def\cC{\mathcal{C}}
\def\cN{\mathcal{N}}
\def\cL{\mathcal{L}}
\def\cS{{\mathcal{S}}}
\def\cR{\mathcal{R}}
\newcommand{\gt}{g_{\ve,t}}
\newcommand{\ve}{\varepsilon}
\newcommand{\vv}[1]{{\mathbf{#1}}}
\newcommand{\mv}[1]{{\bm#1}}
\newcommand{\tvv}[1]{{{\tilde{\vv#1}}}}
\newcommand{\rank}{\operatorname{rank}}
\newcommand{\diag}{\operatorname{diag}}
\renewcommand{\tilde}{\widetilde}
\newcommand{\codim}{\operatorname{codim}}
\newcommand{\spec}{\operatorname{spec}}
\newcommand{\Jarnik}{Jarn\' \i k}
\newcommand{\cU}{{\vv U}}
\newcommand{\J}{\mathbf{J}}
\newcommand{\SL}{\operatorname{SL}}
\newcommand{\fM}{\mathfrak{M}}
\newcommand{\zu}{u_1}
\begin{document}


\title{Khintchine's theorem and Diophantine approximation on manifolds}

\author[Victor Beresnevich]{Victor Beresnevich}
\address{Victor Beresnevich, Department of Mathematics, University of York, Heslington, York, YO10 5DD, United Kingdom}
\email{victor.beresnevich@york.ac.uk}

\author[Lei Yang]{Lei Yang}
\address{Lei Yang, College of Mathematics, Sichuan University, Chengdu, Sichuan, 610000, China}
\email{lyang861028@gmail.com}

\subjclass[2000]{11J83, 11J13, 11K60, 11K55}

\keywords{Diophantine approximation on manifolds, Khintchine's theorem, Hausdorff dimension, rational points near manifolds, quantitative non-divergence, spectrum of exponents}

\dedicatory{Dedicated to G.\,A.\,Margulis on the occasion of his 75th birthday}

\begin{abstract}
In this paper we initiate a new approach to studying approximations by rational points to points on smooth submanifolds of $\R^n$.
Our main result is a convergence Khintchine type theorem for arbitrary nondegenerate submanifolds of $\R^n$, which resolves a
longstanding problem in the theory of Diophantine approximation. Furthermore, we refine this result using Hausdorff $s$-measures
and consequently obtain the exact value of the Hausdorff dimension of $\tau$-well approximable points lying on any nondegenerate
submanifold for a range of Diophantine exponents $\tau$ close to $1/n$. Our approach uses geometric and dynamical ideas together
with a new technique of `generic and special parts'. In particular, we establish sharp upper bounds for the number of rational
points of bounded height lying near the generic part of a non-degenerate manifold. In turn, we give an explicit exponentially small bound for the measure of the special part of the manifold.
The latter uses a result of Bernik, Kleinbock and Margulis.
\end{abstract}

\maketitle



\section{Introduction}\label{sec1}

\subsection{Khintchine's theorem and manifolds}\label{KhM}

To begin with, let us recall the notion of $\psi$-approximable points which is convenient for introducing the problems investigated in this paper. Here and elsewhere, $\psi:(0,+\infty)\to(0,1)$ is a function that will be referred to as an {\em approximation function}. We will say that the point $\vv y=(y_1,\dots,y_n)\in\R^n$ is {\em $\psi$-approximable} if the system
\begin{equation}\label{psi-app}
\qquad  \left|y_i-\frac{p_i}{q}\right|<\frac{\psi(q)}{q}\qquad(1\le i\le n)
\end{equation}
holds for infinitely many $(p_1,\dots,p_n,q)\in\Z^n\times\N$. The set of $\psi$-approximable points in $\R^n$ will be denoted by $\cS_n(\psi)$. In the special case of $\psi_{\tau}(q):=q^{-\tau}$ for some $\tau>0$ we will also write $\cS_n(\tau)$ instead of $\cS_n(\psi_{\tau})$. Recall that, by Dirichlet's theorem \cite{Schmidt-1980}, $\cS_n(1/n)=\R^n$. For functions $\psi$ that decay faster that $q^{-1/n}$
Khintchine \cite{Khintchine-1924, Khintchine-1926} discovered the following simple yet powerful criterion for the proximity of rational points to almost all points $\vv y$ in $\R^n$. We state it below in a modern (slightly less restrictive) form, see \cite{MR2508636,MR2576284,MR4125453} for further details and generalisations. In what follows $\cL_n$ denotes Lebesgue measure on $\R^n$ and $\cL_n(X)=\textsc{Full}$ means that the complement to $X\subset\R^n$ has Lebesgue measure zero.

\medskip

\noindent\textbf{Khintchine's theorem}{\bf:} {\em Given any decreasing approximation function $\psi$\footnote{In the original version of Khintchine's theorem, $q \psi(q)$ is assumed to be decreasing, see \cite{MR2508636} for an overview.},
\begin{equation}\label{kchin1}
  \cL_n\big(\cS_n(\psi)\big)=\left\{\begin{array}{cc}
                                  0 & \text{if }\sum_{q=1}^\infty\psi(q)^n<\infty\,, \\[2ex]
                                  \textsc{Full} & \text{if }\sum_{q=1}^\infty\psi(q)^n=\infty\,.
                                \end{array}
  \right.
\end{equation}
}

The convergence case of Khintchine's theorem is a simple application of the Borel-Cantelli lemma based on the trivial count of rational points of bounded height. However, studying the proximity of rational points to points $\vv y$ lying on a submanifold $\cM\subset\R^n$ gives rise to major challenges. Indeed, extending Khintchine's proof to manifolds requires solving the notoriously difficult problem of counting rational points lying close to $\cM$ \cite[\S1.6.1.2]{MR3618787}. This was first observed by Sprind\v zuk in \cite[\S2.6]{Sprindzuk-1979-Metrical-theory}. The main purpose of this paper is to address the following central problem that was initially communicated by Kleinbock and Margulis in their seminal paper on the Baker-Sprin\v zuk conjecture \cite[\S6.3]{Kleinbock-Margulis-98:MR1652916} and later stated in a more general form\footnote{The general form incorporates the supports of so-called friendly measures, which essentially generalise the notion of nondegeneracy from manifolds to fractals. See \cite{Khalil-Luethi} for recent advances on the version of Problem~\ref{p1} for fractals.} by Kleinbock, Lindestrauss and Weiss \cite[Question~10.1]{MR2134453}.

\begin{problem}\label{p1}
Let $\cM\subset\R^n$ be a nondegenerate submanifold. Verify if for any monotonic function $\psi$ almost no/every point of $\cM$ is $\psi$-approximable whenever the series
\begin{equation}\label{sum}
  \sum_{q=1}^\infty\psi(q)^n
\end{equation}
converges/diverges.
\end{problem}

In this paper we use the notion of nondegeneracy introduced in \cite{Kleinbock-Margulis-98:MR1652916}. A map $\vv f  : \cU  \to \R^n$, defined on an open subset $\cU\subset\R^d$, is said to be \emph{$l$--nondegenerate at} $\vv x_0\in \cU$ if $\vv f$  is $l$--times continuously differentiable on a neighborhood of $\vv x_0$ and the partial derivatives
of $\vv f$ at $\vv x_0$ of orders up to $l$ span $\R^n$. The map $\vv f$
is said to be \emph{nondegenerate} at $\vv x_0$ if it is $l$--nondegenerate at $\vv x_0$ for some $l\in\N$. The map $\vv f$ is said to be \emph{nondegenerate} if it is nondegenerate at $\cL_d$\,--almost every point in $\cU$. In turn, the immersed manifold $\cM:=\vv f(\cU)$ is nondegenerate (at $\vv y_0=\vv f(x_0)$) if the immersion $\vv f:\cU\to\R^n$ is nondegenerate (at $\vv x_0$). This readily extends to manifolds $\cM$ that do not posses a global parameterisation via local parameterisations. As is well known, any real connected analytic manifold not contained in a hyperplane of $\R^n$  is nondegenerate \cite{Kleinbock-Margulis-98:MR1652916}. In fact, it is nondegenerate at every point.

The special case $\psi(q)=q^{-\tau}$, $\tau>0$ of Problem~\ref{p1} was posed by Sprind\v zuk for analytic manifolds \cite{Sprindzuk-1980-Achievements} and famously resolved in full by Kleinbock and Margulis \cite{Kleinbock-Margulis-98:MR1652916}. Note
that for these approximation functions the divergence case is trivial thanks to Dirichlet's theorem. In \cite{Kleinbock-03:MR1982150} Kleinbock extended \cite{Kleinbock-Margulis-98:MR1652916} to  affine subspaces of $\R^n$ satisfying certain Diophantine conditions and to submanifolds of such subspaces that are nondegenerate with respect to them. In another direction Kleinbock, Lindestrauss and Weiss \cite{MR2134453} established the analogue of \cite{Kleinbock-Margulis-98:MR1652916} for the supports of friendly measures. We also refer the reader to \cite{Sprindzuk-1979-Metrical-theory} and \cite{BernikDodson-1999} for various preceding results.

For arbitrary monotonic $\psi$, Problem~\ref{p1} turned out to be far more delicate. Its \textbf{divergence case} was settled for $C^3$ planar curves \cite{MR2373145} and then fully resolved for analytic manifolds in arbitrary dimensions \cite{MR2874641}. More recently, the latter was also extended to arbitrary nondegenerate curves \cite{MR4287738}, while for planar curves the nondegeneracy assumption was replaced by weak nondegeneracy in \cite{BZ}\footnote{A curve $\cC\subset\R^2$ is weakly non-degenerate at a point $\vv p\in\cC$ if there is a neighborhood of $\vv p$ that can be written as the uniform limit of a sequence of non-degenerate curves whose curvatures are uniformly bounded away from zero and infinity, see \cite{BZ} for further details.}.

The \textbf{convergence case} of Problem~\ref{p1} for arbitrary $\psi$ is a different story. It was resolved for $n=2$ for all $C^3$ nondegenerate curves in the breakthrough of Vaughan and Velani \cite{Vaughan-Velani-2007}. Later Huang \cite{HuangRPAdv} extended this to weakly nondegenerate planar curves. However, known results in higher dimensions require various additional constrains on the geometry and dimension of manifolds, predominantly as a result of the use of tools based on Fourier analysis. A brief account of known results is as follows.
Bernik \cite{Bernik-77:MR0480402} proved it for the manifolds in $\R^{dk}$ defined as the Cartesian products of $d\ge k\ge2\;$ $C^{k+1}$ nondegenerate curves in $\R^k$.
Dodson, Rynne and Vickers \cite{DodsonRynneVickers-1991a} proved it for the manifolds $\cM$ in $\R^n$ having at least two non-zero principle curvatures of the same sign with respect to every normal direction at $\vv y$ for almost all $\vv y\in\cM$. Note that this geometric condition requires that the dimension $d=\dim\cM$ satisfies the inequality $(d+1)d\ge 2n$. Vaughan, Velani, Zorin and the first named author of this paper \cite{MR3658127} proved the convergence case of Problem~\ref{p1} for $2$--nondegenerate manifolds in $\R^n$ of dimension $d\ge n/2+1$ with $n\ge4$. They also proved it for hypersurfaces in $\R^3$ with Gaussian curvature non-vanishing almost everywhere \cite[Corollary~5]{MR3658127}.
Simmons \cite{MR3809714} further relaxed the conditions of \cite{DodsonRynneVickers-1991a} and \cite{MR3658127} imposed on manifolds, albeit the restrictions on their dimension remain broadly the same and, for instance, rule out curves. In a related development Huang and Liu \cite{JJHJL} proved a Khintchine type theorem for affine subspaces satisfying certain Diophantine conditions.

Thus, the convergence case of Problem~\ref{p1} remains fully open for curves in dimensions $n\ge3$. Indeed, it is open for subclasses of nondegenerate manifolds in $\R^n$ of every dimension $d<n$. Even in the case of hypersurfaces, which are most susceptible to the methods used in preceding papers, the problem is not fully resolved, e.g. it is open for hypersurfaces in $\R^3$ of zero Gaussian curvature. In this paper we contrive no additional hypotheses on nondegenerate manifolds and resolve the convergence case of Problem~\ref{p1} in full. Our main result reads as follows.

\begin{theorem}\label{T1}
Let $n\ge2$, a submanifold $\cM\subset\R^n$ be nondegenerate, $\psi$ be monotonic, and assume \eqref{sum} converges. Then almost all points on $\cM$ are not $\psi$-approximable.
\end{theorem}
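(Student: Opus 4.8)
The plan is to fix a local nondegenerate parameterisation, reduce Theorem~\ref{T1} to a convergent Borel--Cantelli sum, and thereby concentrate the whole difficulty in a single sharp count of rational points near $\cM$ that holds off a small exceptional set. First I would cover $\cM$ by countably many coordinate patches and, on each, pick a parameterisation $\vv f\colon\cU\to\R^n$ which is $l$--nondegenerate at every point of a fixed compact $\cU_0\subset\cU$; this is legitimate after a further countable subdivision, since nondegeneracy holds $\cL_d$--almost everywhere. After a linear change of coordinates in $\R^n$ and shrinking $\cU_0$ we may assume $\vv f$ is in graph form, $\vv f(\vv x)=(\vv x,\vv g(\vv x))$ with $\vv x\in\R^d$ and $\vv g$ of class $C^l$. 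For $Q=2^k$ write $\delta_Q:=\psi(Q)/Q$ and let $A_k\subset\cU_0$ be the set of $\vv x$ for which \eqref{psi-app} has a solution with $q\in[Q,2Q)$; monotonicity gives $\psi(q)/q\le\delta_Q$ throughout that range. The pull-back of $\cS_n(\psi)\cap\cM$ to $\cU_0$ is contained in $\limsup_k A_k$, so by the convergence Borel--Cantelli lemma it suffices to prove $\sum_k\cL_d(A_k)<\infty$; and since $\sum_k 2^k\psi(2^k)^n\asymp\sum_q\psi(q)^n<\infty$ by monotonicity (Cauchy condensation), it is enough to establish
\begin{equation*}
\cL_d(A_k)\ \ll\ Q\,\psi(Q)^n+Q^{-\kappa}\qquad\text{for some fixed }\kappa>0 .
\end{equation*}

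Each admissible rational point $\vv p/q$ traps the parameter set $\{\vv x\in\cU_0:\norminfty{\vv f(\vv x)-\vv p/q}<\delta_Q\}$, which lies in a $\delta_Q$--cube in the $\vv x$--coordinates and so has $\cL_d$--measure $\ll\delta_Q^{\,d}$. Hence $\cL_d(A_k)$ is at most $\delta_Q^{\,d}$ times the number of $(\vv p,q)\in\Z^n\times\N$ with $Q\le q<2Q$ and $\dist(\vv p/q,\vv f(\cU_0))<\delta_Q$, and the target bound $\cL_d(A_k)\ll Q\psi(Q)^n$ would follow from the heuristically sharp estimate
\begin{equation*}
\#\bigl\{(\vv p,q):\ Q\le q<2Q,\ \dist(\tfrac{\vv p}{q},\vv f(\cU_0))<\delta_Q\bigr\}\ \ll\ Q^{d+1}\psi(Q)^{n-d},
\end{equation*}
since $Q^{d+1}\psi(Q)^{n-d}\cdot\delta_Q^{\,d}=Q\,\psi(Q)^n$. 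This count is, however, \emph{false} in general: rational points near $\cM$ cluster precisely where $\cM$ locally hugs a proper rational affine subspace, so there is no hope of the estimate holding on all of $\cU_0$.

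The resolution is, at each scale $Q$, to quarantine this clustering into \emph{minor arcs} and to run the count over the complementary \emph{major arcs}. A minor arc is, in essence, a subregion of $\cU_0$ on which the unimodular lattice associated with $\vv f(\vv x)$ via the Dani correspondence, run to a time of order $\log Q$, has an abnormally short nonzero vector inside some proper rational subspace --- equivalently, a subregion on which a rational affine flat of unexpectedly small height lies within $\delta_Q$ of $\cM$. Quantitative non-divergence of such polynomial-like trajectories --- the theorem of Bernik, Kleinbock and Margulis, whose proof combines the quantitative non-divergence machinery of Kleinbock--Margulis with the $l$--nondegeneracy of $\vv f$ --- bounds the total $\cL_d$--measure of the minor arcs at scale $Q$ by $\ll Q^{-\kappa}$, in fact by an exponentially small quantity, which is summable. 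On the major arcs the clustering obstruction has been removed and the displayed count holds; because a major arc may still harbour sub-clusters near rational flats of smaller codimension, this is carried out as a finite recursion on the codimension, of depth bounded in terms of $n$ and $l$, with the errors introduced at each level kept within the admissible size.

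The heart of the matter, and the main obstacle, is the sharp count on the major arcs --- the rational-points-near-manifolds problem flagged in the introduction. Establishing it amounts to showing that when denominator--$\asymp Q$ rational points accumulate within $\delta_Q$ of a piece of $\cM$, their pairwise differences --- low-height rational vectors confined to a thin tube around the tangent $d$--plane --- populate that tube densely enough to force, via the geometry of numbers, a genuine rational affine subspace of controlled height close to $\cM$ there; that configuration is then exactly what the minor arcs are designed to absorb, with nondegeneracy ensuring $\cM$ cannot behave this way on a positive-measure set at all scales and that the descent terminates. Making the measure bookkeeping of this recursion uniform in $Q$ and matching it precisely against the convergence of $\sum_q\psi(q)^n$ is where the genuine difficulty lies; the remaining ingredients --- the local reduction, the Borel--Cantelli step, and the per-point $\delta_Q^{\,d}$ bound --- are routine.
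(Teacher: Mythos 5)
Your reduction of Theorem~\ref{T1} to a dyadic Borel--Cantelli sum, the per-point measure bound $\ll\delta_Q^{\,d}$, the target count $\ll Q^{d+1}\psi(Q)^{n-d}$, and the decision to quarantine the clustering phenomenon into minor arcs controlled by the Bernik--Kleinbock--Margulis non-divergence estimate all match the paper's proof of Theorem~\ref{T1} modulo its Theorem~\ref{t1.3}. But two things do not go through as written. First, the claim that the minor arcs at scale $Q$ have measure $\ll Q^{-\kappa}$ is not justified without a preliminary truncation of $\psi$: the non-divergence estimate yields a bound of the shape $\bigl(\psi(Q)^{n}Q^{3/2}\bigr)^{-\alpha}$ (this is \eqref{eq1.4} with $\ve\asymp\psi(Q)$ and $e^{t}\asymp Q$), which is useless once $\psi(q)$ decays faster than $q^{-3/(2n)}$. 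The paper handles this by replacing $\psi(q)$ with $\max\{\psi(q),q^{-5/(4n)}\}$ at the outset --- enlarging the set while preserving convergence of \eqref{sum} --- so that the minor-arc measures become $\ll e^{-\alpha t/4}$ and hence summable. You need this step; without it your exceptional-set sum need not converge.

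Second, and more seriously, the major-arc count --- which you yourself call ``the heart of the matter'' and ``the main obstacle'' --- is not proved, and the mechanism you sketch for it (pairwise differences of nearby rational points forcing a low-height rational flat, followed by a recursion on codimension with ``measure bookkeeping'') is not how the argument closes and would be very hard to make rigorous. The paper's minor arcs are defined dynamically: $\fM_0(\ve,t)$ is the set of $\vv x$ where the \emph{top} successive minimum $\lambda_{n+1}$ of the lattice $b_t\gt\zu(\vv x)\Z^{n+1}$ exceeds $\phi e^{dt/2(n+1)}$; by Mahler duality this is exactly where the dual lattice has a short vector, which is the condition \eqref{eq3.11} fed into the BKM theorem. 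On the complement, $\lambda_{n+1}$ is small, so a bounded dilate of the box that must contain $(-\vv p\sigma_n,q)$ contains a full fundamental domain, and the number of lattice points in it is simply $\ll$ its volume --- a one-step geometry-of-numbers count over hypercubes of side $(\ve e^{-t})^{1/2}$, with no descent on codimension. Your outline has the right architecture but is missing the one idea (defining the minor arcs via $\lambda_{n+1}$ so that the major-arc count becomes a fundamental-domain volume estimate) that turns the heuristic into a proof.
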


Hausdorff measure and Hausdorff dimension are often used to distinguish between sets of Lebesgue measure zero and thus refine the convergence case of Khintchine's theorem. In this paper we make an extra step beyond Theorem~\ref{T1} and establish such refinements. The precise statements are provided in \S\ref{sec1.3}, while \S\ref{sec1.2} contains an overview of preceding results and problems.

\subsection{Rational points near manifolds}\label{RPnM}

The proof of Theorem~\ref{T1} and indeed its generalisation to Hausdorff measures stated in \S\ref{sec1.3} are underpinned by a new result on rational points near nondegenerate manifolds stated in this section, which is of independent interest. For simplicity and without loss of generality we will assume that the manifolds $\cM$ of interest are immersed by maps $\vv f:\cU\to\R^n$, where $\cU\subset\R^d$ denotes an open subset. Furthermore, in view of the Implicit Function Theorem, it is non-restrictive to assume that
\begin{equation}\label{eq3.2}
\vv f (\vv x) = (\vv x,\mv f(\vv x))=(x_1, \dots, x_d, f_{1}(\vv x) , \dots, f_m(\vv x))\,,
\end{equation}
where $d=\dim\cM$ and $m=\codim\cM$, $\vv x=(x_1,\dots,x_d)\in\cU$ and
\begin{equation}\label{e3.2}
\mv f=(f_1,\dots,f_{m}):\cU\to\R^{m}\,.
\end{equation}
Since we are interested in nondegenerate manifolds, the maps $\vv f$ must necessarily be $C^2$. Furthermore,
when proving Theorem \ref{T1}, we can deal with the manifold locally. Therefore,
without loss of generality we can assume that there is a constant $M\ge1$ such that
\begin{equation} \label{lemma_translation_M}
\max_{1\le k\le m}\;\max_{1\le i,j\le d}\;\sup_{\vv x\in \cU}\;\max\left\{\left|\frac{\partial f_k(\vv x)}{\partial x_i}\right|,\;
\left|\frac{\partial^2 f_k(\vv x)}{\partial x_i\partial x_j}\right|\right\}\leq M\,.
\end{equation}

Given $t>0$, $0<\ve<1$ and $\Delta\subset\R^d$, let
$$
\cR(\Delta;\ve,t)=\left\{(\vv p,q) \in \Z^{n+1}:0< q < e^t\text{ and }
  \inf_{\vv x \in \Delta\cap\cU}\left\| \vv f(\vv x) -\frac{\vv p}{q} \right\|_{\infty} <  \frac{\ve}{e^t}
\right\}
$$
and let
$$
N(\Delta;\ve,t)=\#\cR(\Delta;\ve,t)\,.
$$
Thus, $N(\Delta;\ve,t)$ counts the rational points $\vv p/q$ (not necessarily written in the lowest terms) of denominator $0<q< e^t$ lying $\ve e^{-t}$--close to $\vv f(\Delta\cap\cU)\subset\cM$.

Counting rational points on manifolds is usually geared towards establishing estimates of the form
\begin{equation}\label{e}
  N(\cU;\ve,e^t)\ll \ve^{m}e^{(d+1)t}+E(\cU;\ve,t)\,,
\end{equation}
where $E(\cU;\ve,t)$ is an error term. In general $E(\cU;\ve,t)$ cannot be made smaller than $e^{dt}$ for all
nondegenerate manifolds since a $d$--dimensional nondegenerate manifold may contain a $(d-1)$--dimensional
rational subspace. Furthermore, a nondegenerate manifold may accumulate abnormally high number of rational
points around points where it has a very high `contact' with its tangent $d$--dimensional plane, for example,
when this tangent plane is rational. In this case even the estimate $E(\cU; \ve,t)\ll e^{(d+1-\delta)t}$ may not be achievable for any $\delta>0$. Hence, establishing \eqref{e} with a useful error term requires imposing conditions beyond nondegeneracy. Worse still, major limitations on the dimension $d$ and/or $\ve$ arise from the tools that are currently in use. In fact, the theory is only reasonably complete for curves is $\R^2$
thanks to the breakthrough of Vaughan and Velani \cite{Vaughan-Velani-2007} who proved \eqref{e} with $E(\cU;\ve,t)= O( e^{t(1+\delta)})$ and arbitrary $\delta>0$ for any compact $C^3$ curve in $\R^2$ of nonzero curvature, and thus obtained the best possible strengthening of Huxley's earlier result \cite{MR1310631}.
The complementary lower bound was found in \cite{MR2373145} and various improvements for planar curves can be found in \cite{BZ, MR3263942, MR3630723, HuangRPAdv, MR4093379}. Results in higher dimensions are limited to manifolds with various additional hypothesis that we already discussed in \S\ref{KhM} and can be found in \cite{MR3658127, MR3809714, JJHJL}. More recently Huang \cite{MR4132580} obtained essentially the best possible bound on the error term in \eqref{e} for a class of hypersurfaces in $\R^n$ with non-zero Gaussian curvature. A further generalisation of Huang was found by Schindler and Yamagishi \cite{MR4404031}. It is commonly believed that the biggest challenge is establishing \eqref{e} is posed by the case of curves in $\R^n$ for which we have virtually no results. In this context it is worth mentioning the recent work of Huang \cite{MR3985129} on rational points near nondegenerate curves in $\R^3$ of fixed denominator and whose main result implies \eqref{e} with $E(\cU;\ve,t)=O(t^{4/5}e^{8t/5})$. However, with reference to this bound, the main term in \eqref{e} becomes dominant only when $\ve\ge e^{-t/5}$ and so it cannot be used to resolve Problem~\ref{p1} for curves in $\R^3$, which requires understanding rational points lying much closer to the manifolds in question, namely $\ve=o(e^{-t/3})$.

In this paper we deal with all nondegenerate manifolds including nondegenerate curves by introducing a new approach,
which involves splitting a manifold into a `generic part' and a `special part' using diagonal actions on the space of lattices. In short, we establish a sharp upper bound for the number of rational points lying near the generic part, which agrees with the main term in \eqref{e}, see \eqref{eq1.5} below. Regarding the special part, we establish explicit bounds on the size of the special part which decay exponentially and uniformly for $e^{-\frac{3t}{2n}+\delta}\le\ve<1$ as $t\to\infty$, where $\delta>0$ is arbitrary, see \eqref{eq1.4} below.
The key new idea is to apply the so-called quantitative
non-divergence estimate on the space of
lattices \cite[Theorem~6.2]{Bernik-Kleinbock-Margulis-01:MR1829381} in order to demonstrate that the size of the special part is small, and to use certain tools from homogeneous dynamics and
the geometry of numbers to count rational points near the generic part. The previous results that we discussed above have been using tools from
analytic number theory (a version of the circle method) relying on Fourier analysis, and Huxley \cite{MR1310631} used a version of the determinate method of Swinnerton-Dyer. Our main result on rational points is as follows.

\begin{theorem}\label{t1.3}
Suppose $\cU\subset\R^d$ is open, $\vv f:\cU\to\R^n$ be a $C^2$ map satisfying \eqref{eq3.2} and \eqref{lemma_translation_M}. Then for any $0<\ve<1$ and every $t>0$ there is a subset $\fM(\ve,t)\subset \cU$, which can be written as a union of balls in $\cU$ of radius $\ve e^{-t/2}$ of intersection multiplicity $\le N_d$, where $N_d$ is the Besicovitch constant, satisfying the following properties. For every $\vv x_0\in\cU$ such that $\vv f$ is $l$--nondegenerate at $\vv x_0$ there is a ball $B_0\subset\cU$ centred at $\vv x_0$ and constants $K_0,t_0>0$, depending on $B_0$ and $\vv f$ only, such that
\begin{equation}\label{eq1.4}
\cL_d\big(\fM(\ve, t)\cap B_0\big) \le K_0 \left(\ve^{n}e^{\frac{3t}{2}}\right)^{-\frac{1}{d(2l-1)(n+1)}}\quad\text{for }t\ge t_0
\end{equation}
and for every ball $B\subset\cU$, for all sufficiently large $t$ we have that
\begin{equation}\label{eq1.5}
N\Big(B\setminus\fM(\ve,t);\ve,t\Big)\le K_1\ve^{m}e^{(d+1)t} \cL_d(B)\,,
\end{equation}
where $K_1$ depends on $n$ and $\vv f$ only.
\end{theorem}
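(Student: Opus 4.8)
\emph{Proof strategy.}
The whole argument revolves around the scale $\rho:=\ve e^{-t/2}$, which is precisely the threshold below which $\cM$ is indistinguishable from its tangent plane at the resolution $\ve e^{-t}$ of the problem. So the first thing I would record is the elementary consequence of \eqref{eq3.2} and \eqref{lemma_translation_M} that over any ball $B=B(\vv z,\rho)\subset\cU$ the manifold deviates from the affine $d$--plane $T_{\vv z}:=\{(\vv x,\mv f(\vv z)+D\mv f(\vv z)(\vv x-\vv z)):\vv x\in\R^{d}\}$ by at most a constant times $M\rho^{2}=M\ve^{2}e^{-t}\le M\ve e^{-t}$. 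Hence every $(\vv p,q)\in\cR(B;\ve,t)$ yields a rational $\vv p/q$ lying within $O_{\vv f}(\ve e^{-t})$ of $T_{\vv z}$ with first $d$ coordinates within $2\rho$ of $\vv z$, so $N(B;\ve,t)$ is at most the number of rationals of denominator $<e^{t}$ in an $O_{\vv f}(\ve e^{-t})$--neighbourhood of a bounded piece of the affine plane $T_{\vv z}$. I would also record the arithmetic form of the count: writing $\vv p=(\vv a,\vv b)\in\Z^{d}\times\Z^{m}$, the defining inequalities pin $\vv x$ to within $\ve e^{-t}$ of $\vv a/q$, so up to absolute constants $N(\Delta;\ve,t)$ is comparable to $\#\{(\vv a,q)\in\Z^{d}\times\N:\ q<e^{t},\ \vv a/q\in\Delta',\ \|qf_{k}(\vv a/q)\|<C\ve\ (1\le k\le m)\}$, where $\|\cdot\|$ is distance to $\Z$ and $\Delta'$ a fixed dilate of $\Delta$; this is the count handled by the dynamical (flow on the space of unimodular lattices) approach to Diophantine approximation on manifolds, and in particular is the one to which the Bernik--Kleinbock--Margulis estimate will be applied.

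\emph{Definition of the minor set and the bound \eqref{eq1.5}.}
Fix a large constant $C_{1}$, to be chosen in terms of $n$ and $\vv f$, and a constant $c\ge3$. I would call a ball $B(\vv z,\rho)$ (with $\vv z\in\cU$) \emph{exceptional} if $N(B(\vv z,c\rho);\ve,t)>C_{1}\rho^{d}\ve^{m}e^{(d+1)t}$, and set $\fM(\ve,t)$ equal to the union of all exceptional balls; this is visibly a union of balls of radius $\rho=\ve e^{-t/2}$, and a routine covering argument lets one replace it by a subfamily with intersection multiplicity at most $N_{d}$. The bound \eqref{eq1.5} is then immediate: given $B\subset\cU$, cover $B\setminus\fM$ by balls $B_{j}=B(\vv z_{j},\rho)$ with centres $\vv z_{j}\in B\setminus\fM$ and overlap $\le N_{d}$ (Besicovitch); since $\vv z_{j}\notin\fM$, the ball $B(\vv z_{j},\rho)$ is not exceptional, so $N(B_{j};\ve,t)\le N(B(\vv z_{j},c\rho);\ve,t)\le C_{1}\rho^{d}\ve^{m}e^{(d+1)t}$, while every pair counted by $N(B\setminus\fM;\ve,t)$ is counted by some $B_{j}$. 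As there are $O_{d}(\cL_{d}(B)/\rho^{d})$ balls $B_{j}$ once $t$ is large enough that $\rho$ is small, summing gives $N(B\setminus\fM;\ve,t)\ll_{d}C_{1}\,\cL_{d}(B)\,\ve^{m}e^{(d+1)t}$, which is \eqref{eq1.5}.

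\emph{The bound \eqref{eq1.4}.}
It remains to bound $\fM$ near a nondegenerate point, and this is where all the work lies. Fix $\vv x_{0}$ at which $\vv f$ is $l$--nondegenerate and a ball $B_{0}\ni\vv x_{0}$ on which the derivatives of $\vv f$ of order $\le l$ are under control. The plan is to establish the implication: if $\vv z\in B_{0}$ and $B(\vv z,\rho)$ is exceptional, then the tangent data of $\cM$ at $\vv z$ is abnormally close to rational. Concretely, combining exceptionality with the tangent-plane reduction of the first paragraph, an anomalously large count forces, for some dyadic $Q\le e^{t}$, far more rationals of denominator $\asymp Q$ to lie within $O_{\vv f}(\ve e^{-t})$ of the piece of $T_{\vv z}$ over $B(\vv z,c\rho)$ than the volume of the corresponding $O_{\vv f}(\ve e^{-t})$--tube permits; a geometry-of-numbers dichotomy (Minkowski's theorem, in the shape that rationals of bounded denominator in a convex body either fill it to the expected density or are trapped near a proper rational affine subspace of controlled height) then produces a proper rational affine $d$--plane $\mathcal{L}$, of controlled height, with $\dist(T_{\vv z},\mathcal{L})\ll_{\vv f}\ve e^{-t}$ on $B(\vv z,c\rho)$; equivalently, the diagonal orbit emanating from $\vv f(\vv z)$, together with the derivative data of $\vv f$ at $\vv z$, enters a neighbourhood of fixed depth of the cusp of $\SL_{n+1}(\R)/\SL_{n+1}(\Z)$ at some time $\le t$. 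Finally, because $\vv f$ is $l$--nondegenerate at $\vv x_{0}$, the relevant jet map is $(C,\alpha)$--good on $B_{0}$ with $\alpha$ governed by $l$, so the Bernik--Kleinbock--Margulis quantitative non-divergence estimate bounds the measure of the set of such $\vv z\in B_{0}$ by $K_{0}(\ve^{n}e^{3t/2})^{-1/(d(2l-1)(n+1))}$ for $t\ge t_{0}$, which is \eqref{eq1.4}.

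\emph{The main difficulty.}
The tangent-plane reduction, the Besicovitch bookkeeping behind \eqref{eq1.5}, and the final invocation of Bernik--Kleinbock--Margulis are routine. The hard part is the middle step of \eqref{eq1.4}: upgrading the purely numerical hypothesis that $B(\vv z,c\rho)$ carries too many rational points to the rigid geometric conclusion that $T_{\vv z}$ is $O_{\vv f}(\ve e^{-t})$--close to a rational affine subspace of explicitly bounded height, with the height small enough that the non-divergence estimate delivers the stated exponent $\tfrac1{d(2l-1)(n+1)}$. This forces one to control all dyadic denominator ranges $Q\le e^{t}$ at once, and to handle carefully the transition between the regimes $Q\rho<1$ and $Q\rho\ge1$, where the volume heuristic governing the expected count changes character.
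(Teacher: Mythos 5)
Your overall architecture (split into major/minor arcs, bound the measure of the minor set via Bernik--Kleinbock--Margulis, count rational points on the major set by a volume heuristic) matches the paper's, but the logical direction is inverted in a way that leaves the central step unproven. You define $\fM(\ve,t)$ as the union of balls $B(\vv z,\rho)$, $\rho=\ve e^{-t/2}$, on which the count is anomalously large; this makes \eqref{eq1.5} essentially tautological, but it shifts the entire difficulty onto \eqref{eq1.4}, where you must prove the implication ``too many rational points near $\vv f(B(\vv z,c\rho))$ $\Rightarrow$ there is an integer vector $(a_0,\vv a)$ with $|a_0+\vv f(\vv z)\vv a^T|\ll e^{-t}$, $\|\nabla\vv f(\vv z)\vv a^T\|_\infty\ll\ve^{-1}e^{-t/2}$, $0<\|\vv a\|_\infty\ll\ve^{-1}$'' --- these being exactly the inequalities of the set $\mathfrak{S}_{\vv f}(\delta,K,T)$ to which Theorem~\ref{thm:KM} applies with $\delta K T^{n-1}\asymp\ve^{-n}e^{-3t/2}$. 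You gesture at this via a ``geometry-of-numbers dichotomy (Minkowski's theorem \dots)'' producing a nearby rational affine $d$-plane of controlled height, and you yourself flag it as ``the hard part''; but no such dichotomy in the required quantitative form is standard, and deriving these three specific inequalities (with exactly these scales, uniformly over all denominator ranges $Q\le e^t$) from an excess count is precisely the open difficulty that has blocked this problem for curves in $\R^n$. As written, this step is a genuine gap, not a routine deduction.

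The paper avoids proving that implication altogether by running the logic the other way. The minor set is \emph{defined} by a lattice condition: $\vv x\in\fM_0(\ve,t)$ iff $\lambda_{n+1}(b_t\gt\zu(\vv x)\Z^{n+1})>\phi e^{dt/2(n+1)}$, where $\zu(\vv x)$ encodes $\vv f(\vv x)$ and its Jacobian and $b_t,\gt$ are diagonal flows. Mahler's theorem on polar lattices converts ``$\lambda_{n+1}$ large'' into ``$\lambda_1$ of the dual lattice small'', which is \emph{literally} the existence of $(a_0,\vv a)$ satisfying the three inequalities above; so \eqref{eq1.4} is a direct application of Theorem~\ref{thm:KM}. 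On the complement, one only needs the \emph{easy} direction: if $\lambda_{n+1}(b_t\gt\zu(\vv x_0)\Z^{n+1})\le\phi e^{dt/2(n+1)}$, then a fixed dilate of the box capturing all the relevant lattice vectors contains a fundamental domain, so the number of lattice points --- hence of rational points $(\vv p,q)$ near $\vv f(\Delta_t(\vv x_0))$ --- is bounded by the volume, giving \eqref{eq1.5} after summing over a Besicovitch cover. In short: the paper never shows ``many points $\Rightarrow$ near a rational subspace''; it shows ``not near a rational subspace (in the dual, successive-minima sense) $\Rightarrow$ not many points''. To repair your argument you would either have to supply the missing dichotomy in full quantitative strength, or change your definition of $\fM(\ve,t)$ to the dynamical one so that the BKM input is available by construction.
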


We now demonstrate how Theorem~\ref{t1.3} is used to resolve Problem~\ref{p1}.

\subsection{Proof of Theorem~\ref{T1} modulo Theorem~\ref{t1.3}}
To begin with we give the following two auxiliary statements.

\begin{lemma}\label{l1.4}
If $\vv f(\vv x)\in\cS_n(\psi)$ then there are infinitely many $t\in\N$ such that
\begin{equation}\label{eq1.9}
\left\| \vv f(\vv x) -\frac{\vv p}{q} \right\|_{\infty}<\frac{\psi(e^{t-1})}{e^{t-1}}
\end{equation}
for some $(\vv p,q)\in\Z^{n+1}$ with $e^{t-1}\le q<e^t$.
\end{lemma}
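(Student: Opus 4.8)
The plan is to deduce the lemma directly from the definition of $\cS_n(\psi)$ by a dyadic-type partition of the range of denominators, exploiting the monotonicity of $\psi$.

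First I would unpack the hypothesis. Writing $\vv y=\vv f(\vv x)=(y_1,\dots,y_n)$, the assumption $\vv f(\vv x)\in\cS_n(\psi)$ means that the system \eqref{psi-app} holds for infinitely many $(\vv p,q)\in\Z^n\times\N$; equivalently, there are infinitely many $(\vv p,q)\in\Z^{n+1}$ with $q\ge1$ and
\[
\left\|\vv f(\vv x)-\frac{\vv p}{q}\right\|_{\infty}<\frac{\psi(q)}{q}\,.
\]
To each such solution I attach the unique integer $t=t(q)\in\N$ determined by $e^{t-1}\le q<e^t$, i.e. $t=\lfloor\log q\rfloor+1$. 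Since $q\ge e^{t-1}$ and $\psi$ is monotonic, hence non-increasing, we have $\psi(q)\le\psi(e^{t-1})$; combining this with $1/q\le e^{-(t-1)}$ yields
\[
\left\|\vv f(\vv x)-\frac{\vv p}{q}\right\|_{\infty}<\frac{\psi(q)}{q}\le\frac{\psi(e^{t-1})}{e^{t-1}}\,,
\]
which is exactly \eqref{eq1.9} for this $t$, with $e^{t-1}\le q<e^t$ as required. Note that the strict inequality is preserved throughout.

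It remains to check that infinitely many distinct $t\in\N$ occur. For a fixed denominator $q$, each coordinate $p_i$ of an admissible $\vv p$ lies in the open interval $\big(qy_i-\psi(q),\,qy_i+\psi(q)\big)$, whose length is $2\psi(q)<2$; hence only finitely many $\vv p\in\Z^n$ complete a solution with that particular $q$. Consequently the infinitely many solutions supplied by the hypothesis involve infinitely many distinct values of $q$, so $q\to\infty$ along a subsequence of them, and therefore $t(q)\to\infty$, producing infinitely many values of $t$ for which \eqref{eq1.9} is solvable.

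The proof is elementary bookkeeping and presents no genuine obstacle; the only points deserving care are the direction of monotonicity (so that replacing $q$ by the smaller quantity $e^{t-1}$ can only increase $\psi$), the retention of the strict inequality in \eqref{eq1.9}, and the finiteness of solutions with a fixed $q$, which is what upgrades "infinitely many solutions $(\vv p,q)$" to "infinitely many $t$".
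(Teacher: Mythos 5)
Your proof is correct and follows essentially the same route as the paper's: assign to each solution $(\vv p,q)$ the integer $t$ with $e^{t-1}\le q<e^t$, use the monotonicity of $\psi$ together with $1/q\le e^{-(t-1)}$, and observe that unboundedness of $q$ yields infinitely many $t$. Your extra justification that only finitely many $\vv p$ accompany each fixed $q$ (hence $q\to\infty$) is a detail the paper leaves implicit, but it does not change the argument.
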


\begin{proof}
If $\vv f(\vv x)\in\cS_n(\psi)$ then \eqref{psi-app} holds for infinitely many
$(\vv p,q)\in\Z^{n+1}$ with arbitrarily large $q>0$. For each $q$ we can define the corresponding $t\in\N$ from the inequalities $e^{t-1}\le q<e^t$. There are infinitely many $t\in\N$ arising this way since $q$ is unbounded. Finally \eqref{eq1.9} follows from \eqref{psi-app} since $\psi$ is monotonically decreasing.
\end{proof}

\begin{lemma}\label{l1.5}
Let $\psi:\R\to\R$ be monotonic. Then
$$
\sum_{q=1}^\infty\psi(q)^n<\infty\qquad\iff\qquad \sum_{t=1}^\infty\psi(e^t)^ne^t<\infty
$$
\end{lemma}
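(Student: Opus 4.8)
This is a variant of Cauchy's condensation test, and the plan is to compare the two series block by block over the dyadic-type ranges $e^{t-1}\le q<e^t$, $t\in\N$.

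First I would dispose of the degenerate case in which $\psi$ is monotonically non-decreasing: then $\psi(q)^n\ge\psi(1)^n>0$ for all $q\ge1$, so $\sum_q\psi(q)^n$ diverges, while $\psi(e^t)^ne^t\ge\psi(e)^ne^t\to\infty$, so $\sum_t\psi(e^t)^ne^t$ diverges as well, and the claimed equivalence holds vacuously. From then on I assume $\psi$ is monotonically non-increasing. For $t\in\N$ set $I_t=\{q\in\N:e^{t-1}\le q<e^t\}$; these sets are pairwise disjoint and their union contains all sufficiently large integers, so discarding finitely many small terms (which affects neither series' convergence) one may write $\sum_q\psi(q)^n=\sum_t\sum_{q\in I_t}\psi(q)^n$. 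An elementary count shows $\#I_t\asymp e^t$ for $t$ large, with implied constants close to $1-e^{-1}$ (indeed $\#I_t=\lfloor e^t-1\rfloor-\lceil e^{t-1}\rceil+1$, which lies between, say, $\tfrac12 e^t$ and $e^t$ for all large $t$).

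Next, since $\psi$ is non-increasing, for every $q\in I_t$ one has $\psi(e^t)\le\psi(q)\le\psi(e^{t-1})$, whence $\#I_t\,\psi(e^t)^n\le\sum_{q\in I_t}\psi(q)^n\le\#I_t\,\psi(e^{t-1})^n$, and therefore $e^t\psi(e^t)^n\ll\sum_{q\in I_t}\psi(q)^n\ll e^t\psi(e^{t-1})^n$. Summing over $t$, the left inequality yields $\sum_t e^t\psi(e^t)^n\ll\sum_q\psi(q)^n$, so convergence of the series in $q$ forces convergence of the series in $t$. For the converse, the right inequality yields $\sum_q\psi(q)^n\ll\sum_t e^t\psi(e^{t-1})^n=e\sum_s e^s\psi(e^s)^n$ after the substitution $s=t-1$, so convergence of $\sum_t e^t\psi(e^t)^n$ forces convergence of $\sum_q\psi(q)^n$. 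This establishes both directions.

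There is no real obstacle here; the only points requiring a modicum of care are the two-sided estimate $\#I_t\asymp e^t$ and the bookkeeping of the finitely many small values of $q$ and $t$, neither of which influences convergence.
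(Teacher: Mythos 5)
Your proof is correct and is exactly the standard Cauchy condensation argument (blocking over the ranges $e^{t-1}\le q<e^t$ and using monotonicity together with $\#I_t\asymp e^t$) that the paper invokes without writing out, remarking only that the lemma ``is a version of the Cauchy condensation test.'' The only cosmetic caveat is that your dismissal of the non-decreasing case tacitly uses $\psi(1)>0$, which holds for the approximation functions $\psi:(0,\infty)\to(0,1)$ actually used in the paper.
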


Lemma~\ref{l1.5} is a version of the Cauchy condensation test.

\begin{proof}[Proof of Theorem~\ref{T1} modulo Theorem~\ref{t1.3}]
Without loss of generality we consider $\cM$ of the form $\vv f(\cU)$, where $\vv f:\cU\to\R^n$ is a nondegenerate immersion on an open subset $\cU\subset\R^d$. Since $\vv f$ is nondegenerate, for almost every $\vv x_0\in\cU$ the map $\vv f$ is nondegenerate at $\vv x_0$. Hence, without loss of generality, it suffices to prove that
$$
\cL_d\big(\big\{\vv x\in B_0:\vv f(\vv x)\in\cS_n(\psi)\big\}\big)=0\qquad\text{if \eqref{sum} converges and $\psi$ is monotonic}
$$
for a sufficiently small ball $B_0$ centred at $\vv x_0\in\cU$ where $\vv f$ is $l$--nondegenerate at $\vv x_0$ for some $l\in\N$. Fix $\vv x_0$ and take $B_0$ as in Theorem~\ref{T1}.

Without loss of generality we will assume that $\psi(q)\ge q^{-5/4n}$ for all $q>0$, as otherwise we can replace $\psi$ with $\max\{\psi(q),q^{-5/4n}\}$.
By Lemma~\ref{l1.4}, for any $T\ge1$
\begin{align}
\nonumber&\big\{\vv x\in B_0:\vv f(\vv x)\in\cS_n(\psi)\big\}\subset\;\bigcup_{t\ge T}\underbrace{\left(\fM(e\psi(e^{t-1}), t)\cap B_0\right)}_{A_t}\;\;\bigcup\\[0ex]
&\bigcup_{t\ge T}\underbrace{\bigcup_{(\vv p,q)\in\cR(B_0\setminus \fM(e\psi(e^{t-1}), t);e\psi(e^{t-1}),t)}\left\{\vv x\in B_0: \left\| \vv x -\frac{\vv p'}{q} \right\|_{\infty}<\frac{\psi(e^{t-1})}{e^{t-1}}
\right\}}_{B_t}\,.\label{eq1.10}
\end{align}
By Theorem~\ref{t1.3} and the assumption $\psi(q)\ge q^{-5/4n}$, we get that
$$
\cL_d\left(A_t\right)\ll \left(e^{(t-1)/4}\right)^{-\frac{1}{d(2l-1)(n+1)}}
$$
and
$$
\cL_d\left(B_t\right)\ll \psi(e^{t-1})^{m}e^{(d+1)(t-1)} \;\cdot\;\left(\frac{\psi(e^{t-1})}{e^{t-1}}\right)^d\;=\;\psi(e^{t-1})^ne^{t-1}\,.
$$
Hence, by Lemma~\ref{l1.5}, we get that
\begin{align*}
\cL_d\left(\big\{\vv x\in B_0:\vv f(\vv x)\in\cS_n(\psi)\big\}\right)\ll\;\sum_{t\ge T}
\left(e^{(t-1)/4}\right)^{-\frac{1}{d(2l-1)(n+1)}}\;\;+\;\;\sum_{t\ge T}\psi(e^{t-1})^ne^{t-1}
\end{align*}
which tends to $0$ as $T\to\infty$ since the series above are convergent. Therefore, $\cL_d\left(\big\{\vv x\in B_0:\vv f(\vv x)\in\cS_n(\psi)\big\}\right)=0$ and the proof is complete.
\end{proof}

\medskip

\section{Generalisations to Hausdorff measure and dimension}\label{sec2}

\subsection{Problems and known results}\label{sec1.2}
To begin with, let us recall two classical results in this area due to Jarn\'ik and Besicovitch, which represent the Hausdorff dimension and Hausdorff measure refinements of Khintchine's theorem.

\smallskip

\noindent\textbf{The Jarn\'ik-Besicovitch theorem} \cite{Besicovich-1934, Jarnik-1929}{\bf:} {\em Let $\tau\ge 1/n$. Then
\begin{equation}\label{JB}
  \dim \cS_n(\tau)=\frac{n+1}{\tau+1}\,.
\end{equation}
}

\noindent\textbf{Jarn\'ik's theorem\footnote{The original statement of Jarn\'ik's theorem had additional constrains, see \cite{MR2508636} for details.}} \cite{Jarnik-1931}{\bf:} {\em Given any monotonic function $\psi$ and $0<s<n$,
\begin{equation}\label{Jar}
  \cH^s\big(\cS_n(\psi)\big)=\left\{\begin{array}{cc}
                                  0 & \text{if }\sum_{q=1}^\infty \
 q^n\Big(\frac{\psi(q)}q\Big)^{s}<\infty\,, \\[2ex]
                                  \infty & \text{if }\sum_{q=1}^\infty \
 q^n\Big(\frac{\psi(q)}q\Big)^{s}=\infty\,.
                                \end{array}
  \right.
\end{equation}
}

In the above and elsewhere `$\dim$' denotes the Hausdorff dimension and $\cH^s$ denotes the $s$--dimensional Hausdorff measure.
The following general problem aims at refining the Lebesgue measure theory of $\psi$-approximable points on manifolds.
It is geared towards establishing the analogues of the theorems of Jarn\'ik and Besicovitch,
and it incorporates Khintchine type theorems for manifolds as the special case $s=\dim\cM$.

\begin{problem}\label{p2}
Given a smooth submanifold $\cM\subset\R^n$, determine the Hausdorff dimension $s$ of the set $\cS_n(\psi)\cap\cM$ and furthermore determine the $s$-dimensional Hausdorff measure of the set $\cS_n(\psi)\cap\cM$.
\end{problem}

As before, our interest in Problem~\ref{p2} will be focused on nondegenerate manifolds. It is well known that for approximation functions $\psi$ that decay relatively fast the problem cannot have the same answer for all such manifolds, even if the manifolds are nondegenerate at every point. This is easily illustrated by the following example, whose details can be found in \cite{MR2874641}. Let $\cC_r$ be the circle in $\R^2$ defined by the equation $x^2+y^2=r$. Then
\begin{equation}\label{circle}
\dim\cS_2(\tau)\cap\cC_1=\frac{1}{\tau+1}\qquad\text{while}\qquad
\dim\cS_2(\tau)\cap\cC_3=0\qquad\text{for all $\tau>1$}.
\end{equation}
This naturally leads to the following

\begin{problem}[Dimension Problem]\label{p1.4}
Let $1\le d<n$ be integers and $\ccM$ be a class of submanifolds $\cM\subset\R^n$ of dimension $d$. Find the maximal value $\tau(\ccM)$ such that
\begin{equation}\label{dimconj}
 \dim\cS_n(\tau)\cap\cM=\frac{n+1}{\tau+1}-\codim\cM\quad\text{ whenever }
1/n\le\tau<\tau(\ccM)
\end{equation}
for every manifold $\cM\in \ccM$. In particular, find $\tau_{n,d}:=\tau(\ccM_{n,d})$ for the class  $\ccM_{n,d}$ of
manifolds in $\R^n$ of dimension $d$ which are nondegenerate at every
point\footnote{The hypothesis of nondegeneracy can be asked everywhere except on a set of dimension $\le\dim\cS_n(\tau)\cap\cM$. However, this relaxation will not make the problem more general.}.
\end{problem}

Formula \eqref{dimconj} for the dimension is informed by the volume-based expectation for the number of rational points lying close to $\cM$, see \cite{MR2874641} and \cite[\S1.6.2]{MR3618787}. The Dimension Problem was resolved for nondegenerate planar curves in \cite{MR2373145} and \cite{BZ} on establishing that $\tau_{2,1}=1$. Furthermore, a \Jarnik{} type theorem was established in \cite{MR2373145} and \cite{Vaughan-Velani-2007} regarding the
$s$-dimensional Hausdorff measure of $\cS_2(\psi)\cap\cC$ for $C^3$ nondegenerate planar curves $\cC$.

\begin{theorem}[See \cite{MR2373145,Vaughan-Velani-2007}]\label{t1.5}
Given any monotonic approximation function $\psi$, any $s\in(\tfrac12,1)$ and any $C^3$ planar curve $\cC$ nondegenerate at every point, we have that
\begin{equation}\label{jar1}
  \cH^s\big(\cS_2(\psi)\cap\cC\big)=\left\{\begin{array}{cc}
                                  0 & \text{if }\sum_{q=1}^\infty q^{1-s}\psi(q)^{1+s}<\infty\,, \\[2ex]
                                  \infty & \text{if }\sum_{q=1}^\infty q^{1-s}\psi(q)^{1+s}=\infty\,.
                                \end{array}
  \right.
\end{equation}
\end{theorem}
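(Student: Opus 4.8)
The plan is to prove the two halves by separate techniques. First I would split $\cC$ into finitely many pieces (legitimate since $\cH^s$ is countably subadditive) and so reduce to the case $\cC=\vv f(\cU)$ with $\vv f(x)=(x,f(x))$, $f\in C^3$ and $f''\neq0$ on a compact interval $\cU\subset\R$; as this parametrisation is bi-Lipschitz it is enough to compute $\cH^s$ of the corresponding sets of parameters $x$. For the convergence half I would run a covering argument powered by the sharp count of rational points near planar curves; for the divergence half I would combine the Mass Transference Principle of Beresnevich and Velani with the ubiquity of rationals near nondegenerate curves established in \cite{MR2373145}.

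\emph{Convergence half.} Replacing $\psi$ by $\max\{\psi(q),q^{-1}\}$ (which only enlarges $\cS_2(\psi)$ and, as $s>\tfrac12$, increases $\sum_q q^{1-s}\psi(q)^{1+s}$ by at most $\sum_q q^{-2s}<\infty$) I may assume $\psi(q)\ge q^{-1}$. If $\vv f(x)\in\cS_2(\psi)$ then, by Lemma~\ref{l1.4}, for infinitely many $t\in\N$ there is $(\vv p,q)\in\cR(\cU;e\psi(e^{t-1}),t)$ with $|x-p_1/q|<\psi(e^{t-1})/e^{t-1}$; writing $E_t$ for the union over such $(\vv p,q)$ of the intervals of radius $\psi(e^{t-1})/e^{t-1}$ about $p_1/q$, the set in question lies in $\limsup_t E_t$. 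The Vaughan--Velani counting theorem for $C^3$ nondegenerate planar curves gives $N(\cU;\ve,t)\ll_\delta\ve\,e^{2t}+e^{(1+\delta)t}$ for every fixed $\delta>0$, with no exceptional subset of $\cC$, so $E_t$ is a union of $\ll_\delta\psi(e^{t-1})e^{2t}+e^{(1+\delta)t}$ intervals and, for every $T$,
\[
\cH^s_\infty\big(\limsup\nolimits_t E_t\big)\ \ll_\delta\ \sum_{t\ge T}\big(\psi(e^{t-1})e^{2t}+e^{(1+\delta)t}\big)\Big(\tfrac{\psi(e^{t-1})}{e^{t-1}}\Big)^{s}\ \ll\ \sum_{t\ge T}e^{(2-s)t}\psi(e^{t})^{1+s}+\sum_{t\ge T}e^{(1+\delta-s)t}\psi(e^{t})^{s}.
\]
By Cauchy condensation (cf.\ Lemma~\ref{l1.5}) the first sum is comparable to $\sum_q q^{1-s}\psi(q)^{1+s}<\infty$. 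For the second sum, put $\kappa:=(2-s)/(1+s)\in(\tfrac12,1)$, fix $\delta<(2s-1)/(1+s)$, and split the range of $t$ at $\psi(e^{t})=e^{-\kappa t}$: where $\psi(e^{t})<e^{-\kappa t}$ the summand is $<e^{(\frac{1-2s}{1+s}+\delta)t}$, summable since $s>\tfrac12$; where $\psi(e^{t})\ge e^{-\kappa t}$ one has $\psi(e^{t})e^{t}\ge e^{(1-\kappa)t}\ge e^{\delta t}$, hence the summand is $\le e^{(1-s)t}\psi(e^{t})^{1+s}$ and is absorbed into the first sum. Letting $T\to\infty$ gives $\cH^s_\infty(\limsup_t E_t)=0$, hence $\cH^s(\cS_2(\psi)\cap\cC)=0$.

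\emph{Divergence half.} For $q\in\N$ let $\cA_q:=\{p_1\in\Z:0\le p_1\le q,\ \|qf(p_1/q)\|<\psi(q)\}$; taking $p_2$ the integer nearest $qf(p_1/q)$, the point $\vv p/q$ then lies within $(1+M)\psi(q)/q$ of $\cC$, so $\{x\in\cU:\vv f(x)\in\cS_2(\psi)\}\supseteq\limsup_q\bigcup_{p_1\in\cA_q}B(p_1/q,\psi(q)/q)$. The ubiquity theory for $C^3$ nondegenerate planar curves in \cite{MR2373145}, whose analytic core is a van der Corput bound for $\sum_{p_1}e(\alpha\,qf(p_1/q))$ using $f''\neq0$, yields $\#\cA_q\asymp\psi(q)\,q$ together with enough regularity of the distribution of the $p_1/q$ that
\[
\cH^1\Big(\limsup_q\bigcup_{p_1\in\cA_q}B\big(p_1/q,\,(\psi(q)/q)^{s}\big)\Big)=\cH^1(\cU)\qquad\text{whenever}\qquad\sum_q(\#\cA_q)\,(\psi(q)/q)^{s}=\infty.
\]
Since $\sum_q(\#\cA_q)(\psi(q)/q)^{s}\asymp\sum_q q^{1-s}\psi(q)^{1+s}=\infty$ by hypothesis, the inflated $\limsup$ set has full Lebesgue measure in $\cU$. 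Applying the Mass Transference Principle of Beresnevich and Velani to the ball family $\{B(p_1/q,\psi(q)/q)\}$ in ambient dimension $1$ with dimension function $r\mapsto r^{s}$ — for which the inflated balls are precisely $B(p_1/q,(\psi(q)/q)^{s})$ — upgrades this to $\cH^{s}\big(\limsup_q\bigcup_{p_1\in\cA_q}B(p_1/q,\psi(q)/q)\big)=\cH^{s}(\cU)=\infty$ (the last equality since $s<1$). As this $\limsup$ set is contained in $\{x\in\cU:\vv f(x)\in\cS_2(\psi)\}$ and the parametrisation is bi-Lipschitz, $\cH^s(\cS_2(\psi)\cap\cC)=\infty$.

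The main obstacle is the ubiquity statement underlying the divergence half: showing that the rationals $p_1/q\in\cA_q$ are simultaneously numerous ($\asymp\psi(q)q$) and well enough distributed to force the inflated $\limsup$ set to have full Lebesgue measure — this is exactly where the $C^3$ regularity and non-vanishing curvature are consumed, through exponential-sum estimates. By comparison the Mass Transference Principle is used as a black box, and the convergence half is a routine covering estimate once the Vaughan--Velani count (valid on all of $\cC$, with no exceptional set) is available. The restriction $s\in(\tfrac12,1)$ enters only through the error-term book-keeping of the convergence half and as the range in which the ubiquity estimates are known.
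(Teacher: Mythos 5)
This theorem is quoted in the paper from \cite{MR2373145} and \cite{Vaughan-Velani-2007} with no proof supplied, so your attempt can only be measured against those sources --- and it reconstructs their arguments faithfully. The convergence half is exactly the Vaughan--Velani route: the counting bound $N(\cU;\ve,t)\ll_\delta \ve e^{2t}+e^{(1+\delta)t}$ fed into a Hausdorff--Cantelli covering, and your bookkeeping (in particular the split at $\psi(e^t)=e^{-\kappa t}$ with $\kappa=(2-s)/(1+s)$, which is precisely where $s>\tfrac12$ is consumed) checks out. The divergence half via a lower-bound count, ubiquity, and the Mass Transference Principle is likewise the mechanism of \cite{MR2373145}. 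Two imprecisions deserve repair. First, a $C^3$ planar curve that is nondegenerate at every point may have isolated zeros of $f''$ (where some higher derivative is nonzero), so the reduction is to \emph{countably} many compact pieces with $f''\neq0$ plus a countable, hence $\cH^s$-null, exceptional set; countable subadditivity still disposes of the convergence half, and a single piece suffices for divergence. Second, the assertion $\#\cA_q\asymp\psi(q)q$ is not available for individual $q$: arithmetic obstructions can make $\cA_q$ far smaller than expected (or empty) for particular denominators, and the lower bound of \cite{MR2373145} is proved only on average over dyadic ranges $Q/2<q\le Q$. Consequently both the ubiquity input and the comparison $\sum_q(\#\cA_q)(\psi(q)/q)^s\asymp\sum_q q^{1-s}\psi(q)^{1+s}$ must be phrased blockwise; the monotonicity of $\psi$ makes the dyadic version sufficient for everything you subsequently do, so this is a matter of formulation rather than a gap. (You also silently correct the obvious misprint $2^{1-s}$ for $q^{1-s}$ in the divergence condition of the stated theorem, which is right.) With these adjustments your sketch is an accurate account of the known proof.
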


The $C^3$ hypothesis was removed from Theorem~\ref{t1.5} in the case of divergence \cite{BZ} and for a subrange of $s$ in the case of convergence \cite{HuangRPAdv}, where Theorem~\ref{t1.5} was extended to weakly nondegenerate curves.

In higher dimensions there are various speculations as to what $\tau_{n,d}$ might be. Let us first discuss the manifolds of dimension $d>1$.  Consider the nondegenerate manifold $\cM$ in $\R^n$ immersed by the map
\begin{equation}\label{eq2.6}
(x_1,\dots,x_d)\mapsto(x_1,\dots,x_d,x_d^2,\dots,x_d^{n+1-d}).
\end{equation}
Then $\cM$ contains $\R^{d-1}\times\{\vv 0\}$ and so $\dim\cS_n(\tau)\cap\cM\ge \dim\cS_{d-1}(\tau)$. Therefore, by the Jarn\'ik-Besicovitch theorem, we have that
$$
 \dim\cS_n(\tau)\cap\cM\ge \frac{d}{\tau+1}>\frac{n+1}{\tau+1}-\codim\cM\quad\text{ whenever }\tau> 1/(n-d)\,.
$$
This means that $\tau_{n,d}\le 1/(n-d)$ for $d>1$. Any improvement to this hard bound on $\tau_{n,d}$ would require restricting $\cM$ to a smaller subclass of manifolds. Nevertheless,  in all likelihood within the class $\ccM_{n,d}$ of nondegenerate manifolds defined within Problem~\ref{p1.4} this upper bound is exact. We state this formally now as a conjecture.

\begin{conjecture}\label{conj1}
Let $1<d<n$. Then $\tau_{n,d}=\frac{1}{n-d}$.
\end{conjecture}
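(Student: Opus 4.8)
The bound $\tau_{n,d}\le\tfrac1{n-d}$ is already established in the discussion preceding the conjecture (the manifold \eqref{eq2.6} contains a rational $(d-1)$-plane), so what remains is to show, for every $\cM\in\ccM_{n,d}$ and every $\tau\in[1/n,\tfrac1{n-d})$, that $\dim\cS_n(\tau)\cap\cM=\frac{n+1}{\tau+1}-\codim\cM$; this splits into a lower and an upper bound on the dimension. The lower bound is the more accessible half and follows from the divergence theory: for $\psi$ at the critical rate of decay a ubiquity argument together with the Mass Transference Principle (cf.\ \cite{MR2874641,BVVZ2}) yields $\cH^s(\cS_n(\psi)\cap\cM)=\cH^s(\cM)$ whenever $s\le d$, so in particular $\dim\cS_n(\tau)\cap\cM\ge\frac{n+1}{\tau+1}-\codim\cM$ as soon as the right-hand side is at most $d$, i.e.\ for all $\tau\ge1/n$. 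Thus the entire difficulty is the upper bound on the dimension, and this is where Theorem~\ref{t1.3} is brought to bear.

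For the upper bound I would run the Hausdorff-measure analogue of the Borel--Cantelli argument used above to deduce Theorem~\ref{T1}. Fix $\vv x_0$ where $\vv f$ is $l$-nondegenerate, take $B_0$ and $t_0$ as in Theorem~\ref{t1.3}, put $\psi=\psi_\tau$ and $\ve=e\psi(e^{t-1})\asymp e^{-\tau t}$, and cover $\{\vv x\in B_0:\vv f(\vv x)\in\cS_n(\tau)\}$, exactly as in \eqref{eq1.10}, by $\fM(\ve,t)\cap B_0$ together with the $\asymp\ve e^{-t}$-balls about the rational points counted in $\cR(B_0\setminus\fM(\ve,t);\ve,t)$, for every large $t$. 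The minor-arcs contribution behaves perfectly: by \eqref{eq1.5} its $\cH^s$-mass at level $t$ is $\ll\ve^m e^{(d+1)t}(\ve e^{-t})^s=e^{t\,((d+1-s)-\tau(m+s))}$, and the exponent is negative precisely when $s>\frac{n+1}{\tau+1}-m$, so this part sums to $0$ in $\cH^s$ for such $s$. The major-arcs contribution is the delicate one: $\fM(\ve,t)\cap B_0$ is a bounded-multiplicity union of balls of radius $\ve e^{-t/2}$ of total measure $\le K_0(\ve^n e^{3t/2})^{-1/(d(2l-1)(n+1))}$, so its $\cH^s$-mass at level $t$ is $\ll(\ve^n e^{3t/2})^{-1/(d(2l-1)(n+1))}(\ve e^{-t/2})^{s-d}\asymp\exp\!\Big(t\big[(\tfrac12+\tau)(d-s)-\tfrac{3/2-n\tau}{d(2l-1)(n+1)}\big]\Big)$, which tends to $0$ if and only if
\[
\Big(\tfrac12+\tau\Big)(d-s)<\frac{\tfrac32-n\tau}{d(2l-1)(n+1)}.
\]
Whenever this inequality can be maintained as $s$ decreases to $\frac{n+1}{\tau+1}-m$, both contributions vanish, $\cH^s\big(\{\vv x\in B_0:\vv f(\vv x)\in\cS_n(\tau)\}\big)=0$ for all $s>\frac{n+1}{\tau+1}-m$, and covering $\cM$ by countably many such $B_0$ gives the claimed upper bound on the dimension.

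The whole conjecture therefore reduces to the displayed inequality at $s=\frac{n+1}{\tau+1}-m$, i.e.\ to
\[
\Big(\tfrac12+\tau\Big)\Big(n-\frac{n+1}{\tau+1}\Big)<\frac{\tfrac32-n\tau}{d(2l-1)(n+1)}.
\]
As $\tau\downarrow1/n$ the left-hand side tends to $0$ while the right-hand side stays bounded below by $\tfrac12/(d(2l-1)(n+1))>0$, so the inequality holds for $\tau$ in a neighbourhood of $1/n$ (depending on $l$); this is exactly the mechanism behind the Hausdorff-dimension result of this paper for $\tau$ near $1/n$. But to reach $\tau=\tfrac1{n-d}$ one needs it for $\tau$ bounded away from $1/n$, and there the bound \eqref{eq1.4} on the measure of the major arcs is simply too weak: already the requirement $\tfrac32-n\tau>0$ forces $\tau<\tfrac3{2n}$, which is strictly below $\tfrac1{n-d}$ once $d$ is not too small, and the factor $(2l-1)$ makes the admissible window collapse for manifolds whose nondegeneracy first appears at high order $l$. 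So the genuine obstruction is to improve Theorem~\ref{t1.3}: one needs a bound on $\cL_d(\fM(\ve,t)\cap B_0)$ that decays like a fixed positive power of $e^{-t}$, uniformly down to $\ve\asymp e^{-\tau t}$ for every $\tau<\tfrac1{n-d}$, with an exponent not degrading in $l$. In the present approach that bound ultimately rests on the Bernik--Kleinbock--Margulis quantitative non-divergence estimate, so a real advance would require either a sharper quantitative non-divergence input sensitive to higher-order contact, or a self-improving (multiscale) re-application of Theorem~\ref{t1.3} on the major arcs themselves, or an entirely new treatment of the points of $\cM$ with anomalously high contact with their tangent $d$-planes --- the same phenomenon that produces the extremal example \eqref{eq2.6}. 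I expect this to be the main difficulty, which is presumably why only the range near $1/n$ is obtained here.
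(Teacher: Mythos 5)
The statement you were asked about is a \emph{conjecture}: the paper offers no proof of it, and neither do you --- which you say explicitly, so there is no dishonesty here, but there is also no proof to certify. What your writeup actually does is (i) correctly observe that the lower bound $\dim\cS_n(\tau)\cap\cM\ge\frac{n+1}{\tau+1}-\codim\cM$ is known throughout the range $1/n\le\tau<1/(n-d)$ (the paper cites \eqref{dimconjL} from \cite{MR3731303}, valid for arbitrary $C^2$ manifolds, rather than the ubiquity/mass-transference route you sketch, but the substance is the same), and (ii) rederive, essentially verbatim, the paper's own Corollary~\ref{cor2.10}: your displayed condition $(\tfrac12+\tau)(d-s)<\alpha(\tfrac32-n\tau)$ evaluated at $s=\frac{n+1}{\tau+1}-m$ is exactly \eqref{eq2.14}, and your diagnosis of why it cannot reach $\tau=1/(n-d)$ --- the hard ceiling $\tau<3/(2n)$ forced by needing $\tfrac32-n\tau>0$ in \eqref{eq1.4}, plus the degradation in $l$ --- is accurate and matches the paper's own framing of what remains open. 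One cosmetic error: you have swapped the paper's labels. In the paper $\fM(\ve,t)$ is the set of \emph{Minor} arcs (the exceptional set of small measure bounded via quantitative non-divergence), while \eqref{eq1.5} is the sharp count of rational points near the \emph{Major} arcs $B\setminus\fM(\ve,t)$; your "minor-arcs contribution" and "major-arcs contribution" refer to the opposite objects.

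So the verdict is: no gap in honesty, but no proof either, because none exists in the paper or in your proposal. If this were to be graded as an attempted proof of Conjecture~\ref{conj1} it fails at the point you yourself flag --- the measure bound \eqref{eq1.4} for the minor arcs is vacuous once $\ve\asymp e^{-\tau t}$ with $\tau\ge 3/(2n)$, and nothing in the paper replaces it. Your suggested remedies (a sharper non-divergence input, a multiscale reapplication on the minor arcs, or a separate treatment of points of high tangential contact) are reasonable speculation but are not carried out. As a critical assessment of the state of the problem your text is correct and aligns with the paper; as a proof of the conjecture it is, by its own admission, incomplete.
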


The following lower bound towards Conjecture~\ref{conj1}
was established in \cite{MR3731303}:
\begin{equation}\label{dimconjL}
 \dim\cS_n(\tau)\cap\cM\ge \frac{n+1}{\tau+1}-\codim\cM\quad\text{ whenever }
\frac1n\le\tau<\frac{1}{n-d}\,,
\end{equation}
which is valid literally for every $C^2$ submanifold $\cM\subset\R^n$ of every dimension $1\le d<n$. In particular, it does not require nondegeneracy or any other constrain on $\cM$. Furthermore, in the case of analytic nondegenerate submanifolds of $\R^n$ the following more subtle Hausdorff measure version of \eqref{dimconjL} for generic $\psi$ was obtained in \cite{MR2874641} generalising the divergence part of Theorem~\ref{t1.5}.

\begin{theorem}[Theorem~2.5 in \cite{MR2874641}]\label{t2.5}
For every analytic nondegenerate submanifold $\cM$ of $\R^n$ of dimension $d$ and codimension $m=n-d$, any monotonic $\psi$ such that $q\psi(q)^m\to\infty$ as $q\to\infty$ and any $s\in(\frac{md}{m+1},d)$ we have that
\begin{equation}\label{e1.10}
 \cH^s(\cS_n(\psi)\cap\cM)=\infty
\end{equation}
whenever the series
\begin{equation}\label{conv2}
  \sum_{q=1}^\infty \
 q^n\Big(\frac{\psi(q)}q\Big)^{s+m}
\end{equation}
diverges.
\end{theorem}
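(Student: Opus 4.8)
The plan is to establish the theorem through the ``ubiquity'' (equivalently, regular-systems) machinery for $\limsup$ sets, the decisive input being a \emph{lower} bound, with spacing, for the number of rational points lying near $\cM$; no upper bound is needed for a divergence statement of this kind.

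First I would localise and recast the problem as one about a $\limsup$ of balls. Since Hausdorff measure is countably stable it suffices to work inside one analytic coordinate patch $\vv f:\cU\to\R^n$ of the form \eqref{eq3.2}, and --- as on an analytic nondegenerate manifold every point is nondegenerate --- to fix a small ball $B_0\subset\cU$ and prove that $\cH^s\big(\{\vv x\in B_0:\vv f(\vv x)\in\cS_n(\psi)\}\big)=\infty$. Because the first $d$ coordinates of $\vv f$ are the identity, $\vv f$ is bi-Lipschitz, so $\cH^s$ is comparable on $\cU$ and on $\vv f(\cU)$; moreover, for fixed $q$ and a fixed rational $\vv p/q$, the set of $\vv x\in B_0$ satisfying \eqref{psi-app}, when nonempty, contains a ball of radius $\asymp\psi(q)/q$ centred at a parameter point $\vv x_{\vv p,q}$ (using that the partial derivatives of $\vv f$ are bounded), and such $\vv p$ exist only for the $\asymp q^d\psi(q)^m$ rationals $\vv p/q$ that lie within $\asymp\psi(q)/q$ of $\cM$. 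Hence the set in question contains a $\limsup$, over these rationals, of the balls $B(\vv x_{\vv p,q},c\,\psi(q)/q)$, and one wants its $\cH^s$-measure to be infinite. A reassuring sanity check: the natural Hausdorff sum $\sum_q(\#\ \text{of such rationals})\cdot(\text{radius})^s=\sum_q q^d\psi(q)^m(\psi(q)/q)^s=\sum_q q^{d-s}\psi(q)^{s+m}$ is precisely the series in \eqref{conv2}, and at $s=d$ it reduces to the Khintchine series $\sum_q\psi(q)^n$ of Theorem~\ref{T1}.

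The heart of the proof --- and the main obstacle --- is a local ubiquity / regular-systems statement for rational points near $\cM$: there is $\rho(Q)\to0$ such that for every sub-ball $B\subseteq B_0$ and all large $Q$ the $\rho(Q)$-neighbourhoods of the parameter points of all rationals $\vv p/q$ with $q\le Q$ lying within $\rho(Q)$ of $\cM$ cover a fixed positive proportion of $B$; equivalently, a lower bound $\gg Q^{d+1}\eta^m\cL_d(B)$ for the number of reduced rationals of denominator $\asymp Q$ lying within $\eta$ of $\vv f(B)$, together with mutual separation $\gg1/Q$ of their parameter points. For analytic nondegenerate $\cM$ I would prove this by induction on $d=\dim\cM$: the base case $d=1$ is the lower-bound/ubiquity theorem for nondegenerate analytic curves (the counting underlying the divergence theory of $\cS_n(\psi)$ on curves); for $d\ge2$ one foliates a generic portion of $\vv f(B_0)$ by analytic nondegenerate submanifolds of dimension $d-1$, applies the inductive estimate on the leaves, and multiplies by the number of leaves via a Fubini-type argument in $\cU$ --- the ``bad'' locus, where the order of nondegeneracy jumps or the transversality needed for the foliation fails, being a proper analytic subset, hence of dimension $<d$ and so of zero $\cH^s$-measure and negligible throughout. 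This is exactly where analyticity is genuinely used. The hypothesis $q\psi(q)^m\to\infty$ is what keeps the main term $\asymp q^d\psi(q)^m$ above the ``floor'' $\asymp q^{d-1}$ contributed by rational subvarieties inside $\cM$, and the restriction $\tfrac{md}{m+1}<s<d$ is what renders the contribution of such degenerate loci negligible after weighting by $\cH^s$.

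With ubiquity in hand, I would invoke the $\cH^s$-divergence part of the ubiquity / regular-systems framework (or, equivalently, a suitable Mass Transference Principle fed by the Khintchine-type divergence theorem on $\cM$): the parameter points $\vv x_{\vv p,q}$ form a ubiquitous system with respect to $\rho$, the approximating radii are $\psi(q)/q$, and divergence of $\sum_q q^n(\psi(q)/q)^{s+m}$ --- the same as divergence of $\sum_q q^{d-s}\psi(q)^{s+m}$ --- yields $\cH^s\big(\{\vv x\in B:\vv f(\vv x)\in\cS_n(\psi)\}\big)=\cH^s(B)=\infty$ for every ball $B\subseteq B_0$, admissibility of all $s\in(\tfrac{md}{m+1},d)$ being guaranteed by the two hypotheses just noted. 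Finally, ``$\cH^s$ infinite in every ball of $\cM$'' upgrades at once to $\cH^s(\cS_n(\psi)\cap\cM)=\infty$, since $s<d=\dim\cM$ forces $\cH^s(\cM\cap B)=\infty$ for every ball $B$.
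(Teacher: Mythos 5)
This statement is quoted from \cite{MR2874641} and is not proved in the present paper, so there is no internal argument to compare against; the relevant benchmark is the original proof in that reference. Your outline --- localising to a coordinate patch, building a ubiquitous system from the parameter points of rationals near $\cM$ via a lower-bound count with $\gg 1/Q$ separation (obtained for analytic nondegenerate manifolds by fibering into nondegenerate curves and handling the exceptional analytic locus), and then feeding the divergence of $\sum_q q^{d-s}\psi(q)^{s+m}$ into the Hausdorff-measure part of the ubiquity/mass-transference framework --- is essentially the same strategy as the proof in \cite{MR2874641}, with the exponent bookkeeping done correctly.
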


The remaining problem in establishing Conjecture~\ref{conj1} is to get the upper bound for the dimension. Partial progress was made in \cite{MR3658127, MR3985129, MR4132580, JJHJL, MR4404031, MR3809714} as a consequence of results on counting rational points, see \S\ref{RPnM}. However, as with Problem~\ref{p1}, Problem~\ref{p1.4} remains open for curves in dimensions $n\ge3$ and subclasses of nondegenerate manifolds in $\R^n$ of every dimension $d<n$.

Non-degenerate curves are of special interest for various reasons. First of all curves cannot contain rational subspaces and so example \eqref{eq2.6} is not applicable to them. Curves can be used to analyse manifolds of higher dimensions using fibering techniques. In fact, Theorem~\ref{t2.5} and consequently the lower bound \eqref{dimconjL} hold in the follwoing stronger form for nondegenerate curves.

\begin{theorem}[See {\cite[Theorem~7.2]{MR2874641}} and {\cite{MR4287738}}]\label{t2.6}
For every curve $\cC$ in $\R^n$ nondegenerate at every point, any monotonic $\psi$ such that $q\psi(q)^{(2n-1)/3}\to\infty$ as $q\to\infty$ and any $s\in(\frac{1}{2},1)$ we have that
\begin{equation}\label{e1.11}
 \cH^s(\cS_n(\psi)\cap\cC)=\infty
\end{equation}
whenever the series \eqref{conv2} with $m=n-1$ diverges. Consequently,
\begin{equation}\label{dimconjL2}
 \dim\cS_n(\tau)\cap\cC\ge \frac{n+1}{\tau+1}-(n-1)\quad\text{ whenever }
\frac1n\le\tau<\frac{3}{2n-1}\,.
\end{equation}
\end{theorem}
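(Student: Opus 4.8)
The plan is to realise \eqref{e1.11} as a statement about a $\limsup$ set in a parameter interval and to run the ubiquity / mass transference machinery. Since $\cC$ is nondegenerate at every point, a compactness argument lets us cover $\cC$ by finitely many pieces of the form $\vv f(I)$, where $I\subset\R$ is a bounded interval and $\vv f:I\to\R^n$ is an $l$--nondegenerate immersion with constants uniform on $I$; as $\vv f$ is then bi-Lipschitz on $I$ and bi-Lipschitz maps preserve $\cH^s$ up to constants, it suffices to bound $\cH^s(E)$ from below, where $E:=\{x\in I:\vv f(x)\in\cS_n(\psi)\}$. For $(\vv p,q)\in\Z^{n+1}$ with $q$ large, the Mean Value Theorem and \eqref{lemma_translation_M} show that if $\vv p/q$ lies within $\tfrac12\psi(q)/q$ of the curve point $\vv f(p_1/q)$, then every $x$ with $|x-p_1/q|<c\,\psi(q)/q$ satisfies \eqref{psi-app}, for a constant $c=c(n,M)>0$. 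Hence $E\supseteq\limsup_i\Delta_i$, where $\{\Delta_i\}$ is the family of intervals of length $2c\,\psi(q)/q$ centred at the points $p_1/q$, indexed by all such admissible pairs $(\vv p,q)$.

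The crux is a \emph{ubiquity} statement for this family: for a lacunary sequence $Q_k\to\infty$, a fixed positive proportion of $I$ is covered by those intervals $\Delta_i$ coming from pairs with $Q_{k-1}\le q<Q_k$. Equivalently, one needs a \emph{lower} bound of the expected order $\gg Q^{2}\psi(Q)^{\,n-1}\,\cL_1(J)$ for the number of admissible pairs with $q\asymp Q$ and $p_1/q\in J$, uniformly over all subintervals $J\subseteq I$ that are not too short; that is, the correct order of magnitude for the count of rational points lying within $\psi(Q)/Q$ of $\vv f(I)$ \emph{together with} the absence of pathological clustering of these points along $I$. This is exactly where the hypothesis $q\,\psi(q)^{(2n-1)/3}\to\infty$ is used: it forces the admissible distance $\rho=\psi(q)/q$ to exceed $q^{-1-\frac{3}{2n-1}}$, and this is precisely the range in which the distribution of rational points near a nondegenerate curve in $\R^n$ is controlled, via quantitative non-divergence / equidistribution estimates of Bernik--Kleinbock--Margulis type (for analytic curves one can reach smaller distances through resolution of singularities, which is how Theorem~\ref{t2.5} is proved; the general nondegenerate case is \cite{BVVZ2}). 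Proving this ubiquity statement in the stated range is the main obstacle: the counting need only be a lower bound, but controlling the spacing along $I$ is delicate and is what confines the argument to $s>\tfrac12$.

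Granting ubiquity, the Hausdorff measure theory of $\limsup$ sets generated by ubiquitous systems (Beresnevich--Dickinson--Velani), equivalently the Mass Transference Principle of Beresnevich--Velani applied with the dimension function $r\mapsto r^{s}$, converts the resulting full--measure statement $\cL_1\!\big(\limsup_i B(p_1/q,(\psi(q)/q)^{s})\big)=\cL_1(I)$ into $\cH^{s}(E)=\infty$ for every $s\in(\tfrac12,1)$ for which the series $\sum_q q^{\,1-s}\psi(q)^{\,n-1+s}$ diverges; a dyadic regrouping identifies this series with \eqref{conv2} for $m=n-1$, giving \eqref{e1.11}. Finally \eqref{dimconjL2} follows by taking $\psi=\psi_\tau$: the series \eqref{conv2} with $m=n-1$ diverges for every $s\le\tfrac{n+1}{\tau+1}-(n-1)$, this threshold exceeds $\tfrac12$ precisely when $\tau<3/(2n-1)$ and does not exceed $1$ once $\tau\ge1/n$, and the same inequality $\tau<3/(2n-1)$ is equivalent to $q\,\psi_\tau(q)^{(2n-1)/3}\to\infty$; hence the preceding applies for all $s$ up to this threshold and yields $\dim\cS_n(\tau)\cap\cC\ge\tfrac{n+1}{\tau+1}-(n-1)$.
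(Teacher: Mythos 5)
This theorem is quoted in the paper with citations to \cite{MR2874641} and \cite{BVVZ2} and is not proved there, so there is no internal proof to compare against; your outline faithfully reconstructs the argument of those references (lower bounds for rational points near the curve in the range $\psi(q)\gg q^{-3/(2n-1)}$ via quantitative non-divergence, upgraded to local ubiquity and then to $\cH^s$-divergence via the ubiquity/mass-transference machinery, with the dyadic regrouping matching \eqref{conv2} and the $\tau$-computation for \eqref{dimconjL2} all correct). The only slight imprecision is your gloss on the role of $s>\tfrac12$: in the sources it arises because the tail of the series below the counting threshold $q^{-3/(2n-1)}$ converges exactly when $s>\tfrac12$, which is what permits truncating $\psi$ at that threshold without losing divergence, rather than from the spacing control per se.
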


\begin{conjecture}[Curves]
For every $n\ge2$ we have that $\tau_{n,1}=\frac{3}{2n-1}$.
\end{conjecture}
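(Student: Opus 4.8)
\medskip

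\noindent\emph{A plan of proof.} The plan is to split the conjecture into its lower‑bound half, which is already available, and its upper‑bound half, which carries all the content; the case $n=2$ (where the conjecture reads $\tau_{2,1}=1$) is in any event already known. By the definition in Problem~\ref{p1.4}, $\tau_{n,1}$ is the supremum of the exponents $\tau$ for which \eqref{dimconj} holds for every curve $\cC\subset\R^n$ nondegenerate at every point, so proving $\tau_{n,1}=\tfrac{3}{2n-1}$ amounts to showing, for every such $\cC$ and every $\tfrac1n\le\tau<\tfrac{3}{2n-1}$, that $\dim\cS_n(\tau)\cap\cC=\tfrac{n+1}{\tau+1}-(n-1)=:s_0$. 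The lower bound $\dim\cS_n(\tau)\cap\cC\ge s_0$ in this range is exactly \eqref{dimconjL2} of Theorem~\ref{t2.6}, and $s_0$ decreases from $1$ to $\tfrac12$ as $\tau$ runs from $\tfrac1n$ to $\tfrac{3}{2n-1}$, which is precisely the range of the parameter in that theorem. Hence the whole problem is the matching upper bound $\dim\cS_n(\tau)\cap\cC\le s_0$ for $\tfrac1n\le\tau<\tfrac{3}{2n-1}$.

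For this upper bound I would pass to local charts $\vv f:\cU\to\R^n$ of the form \eqref{eq3.2}--\eqref{lemma_translation_M}, which are bi‑Lipschitz, and, by countable stability of Hausdorff dimension together with nondegeneracy at every point, reduce to proving $\dim\{\vv x\in B_0:\vv f(\vv x)\in\cS_n(\tau)\}\le s_0$ on a small ball $B_0$ about a point at which $\vv f$ is $l$-nondegenerate. Fix $s>s_0$. Exactly as in the proof of Theorem~\ref{T1}, Lemma~\ref{l1.4} together with Theorem~\ref{t1.3} yields, for every $T$, a cover by sets $A_t\cup B_t$ with $t\ge T$, where $B_t$ is a union of $\ll e^{(2-(n-1)\tau)t}$ balls of radius $\asymp e^{-(\tau+1)t}$ produced by the major‑arc count \eqref{eq1.5} with $d=1$, $m=n-1$ and $\ve=e\psi_\tau(e^{t-1})$, while $A_t=\fM(e\psi_\tau(e^{t-1}),t)\cap B_0$ is a union of balls of radius $\rho_t\asymp e^{-(\tau+\frac12)t}$. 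The major arcs cause no difficulty:
\[
\sum_{t\ge T}\cH^s_\infty(B_t)\ \ll\ \sum_{t\ge T}e^{(2-(n-1)\tau)t}\,e^{-(\tau+1)st}\ =\ \sum_{t\ge T}e^{(\tau+1)(s_0-s)t}\ \xrightarrow[T\to\infty]{}\ 0 ,
\]
which is just the heuristic volume count \eqref{e} with no error term and already forces the conjectured dimension. Everything therefore hinges on the minor arcs $A_t$.

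Covering $A_t$ by $\asymp\cL_1(A_t)/\rho_t$ balls of radius $\rho_t$ with bounded overlap gives $\cH^s_\infty(A_t)\ll\cL_1(A_t)\,\rho_t^{\,s-1}$, so (since $s<1$) summability over $t$ for every $s>s_0$ demands that $\cL_1(A_t)$ decay exponentially, essentially at rate $(\tau+\tfrac12)\tfrac{n\tau-1}{\tau+1}$. But \eqref{eq1.4} of Theorem~\ref{t1.3} only gives $\cL_1(A_t)\ll\big((e\psi_\tau(e^{t-1}))^n e^{3t/2}\big)^{-1/((2l-1)(n+1))}\asymp e^{-\frac{3/2-n\tau}{(2l-1)(n+1)}t}$, which is non‑trivial only for $\tau<\tfrac{3}{2n}$ and even then decays far too slowly to reach $s_0$ unless $\tau$ lies close to $\tfrac1n$ --- which is precisely the window in which the Hausdorff‑dimension formula is established elsewhere in this paper. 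This is a genuine obstruction, not a technicality: for $\tau\in[\tfrac{3}{2n},\tfrac{3}{2n-1})$ even the quantity $\ve^n e^{3t/2}$ governing \eqref{eq1.4} stays bounded, so the major/minor dichotomy of Theorem~\ref{t1.3} degenerates --- $\fM(\ve,t)$ need no longer be a proper subset of $B_0$ --- and conveys no information. I expect that closing the gap will require one of two essentially equivalent inputs: a substantially sharper bound for $\cL_d(\fM(\ve,t)\cap B_0)$ with genuine exponential decay, presumably obtained by recursively peeling off the ``high‑contact'' loci, where the curve is trapped near a low‑height rational affine line, and re‑running quantitative non‑divergence at each scale; or a direct proof of \eqref{e} for nondegenerate curves in $\R^n$ with error term $o(\ve^{n-1}e^{2t})$ valid for $\ve$ as small as $e^{-3t/(2n-1)+\delta}$, i.e. in the regime $\ve=o(e^{-t/3})$ that the introduction identifies as out of reach of current methods. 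The remainder of the argument is routine; this missing input is the entire difficulty.
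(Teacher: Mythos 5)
The statement you were asked to prove is labelled a \emph{conjecture} in the paper: the authors offer no proof of it, and it remains open. Your proposal correctly recognises this. Your reduction is accurate on every point: the lower bound $\dim\cS_n(\tau)\cap\cC\ge\frac{n+1}{\tau+1}-(n-1)$ for $\frac1n\le\tau<\frac{3}{2n-1}$ is exactly \eqref{dimconjL2} of Theorem~\ref{t2.6}, so the entire content of the conjecture is the matching upper bound; the major-arc contribution via \eqref{eq1.5} sums to the conjectured exponent with room to spare; and the bottleneck is the measure of the minor arcs. Your quantitative diagnosis is also correct: with $\ve\asymp e^{-\tau t}$ the quantity $\ve^n e^{3t/2}$ in \eqref{eq1.4} grows like $e^{(3/2-n\tau)t}$, so the minor-arc bound is vacuous for $\tau\ge\frac{3}{2n}$ and, even where it is nontrivial, the decay rate $\frac{3/2-n\tau}{(2l-1)(n+1)}$ falls far short of the required rate $(\tau+\frac12)\frac{n\tau-1}{\tau+1}$ except when $\tau$ is very close to $\frac1n$. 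This is precisely why the paper's own Corollary~\ref{cor2.15} only establishes \eqref{eqn2.20} in the window $n\tau\le 1+\delta_n$ with $\delta_n<\frac{1}{2n^2+5n}$, far below the conjectured threshold $\frac{3}{2n-1}$.

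So there is no error in your reasoning, but there is also no proof: as you yourself state, the essential input --- either exponential-in-$t$ decay of $\cL_1\big(\fM(\ve,t)\cap B_0\big)$ at the required rate, or a direct count of rational points near nondegenerate curves with error term $o(\ve^{n-1}e^{2t})$ for $\ve$ as small as $e^{-3t/(2n-1)+\delta}$ --- is not supplied by Theorem~\ref{t1.3} or anything else in the paper, and supplying it is the open problem. Your write-up should therefore be presented as an analysis of what a proof would require, not as a proof; as such it is a faithful account of the state of the art and of why the paper stops where it does.
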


\bigskip

\subsection{New results on Hausdorff measure and dimension}\label{sec1.3}

Here we provide generalisations of Theorem~\ref{T1} to $s$-dimensional Hausdorff measures and Hausdorff dimension, which thus contribute to resolving the problems surveyed in \S\ref{sec1.2}. The following is our key outcome on Hausdorff measures.

\begin{theorem}\label{T2}
Let $n\ge2$ be an integer, $s>0$ and $\cM$ be a submanifold of\/ $\R^n$ such that
\begin{equation}\label{cond1}
  \cH^s\left(\{\vv y\in\cM: \cM\text{ is not $l$--nondegenerate at $\vv y$}\}\right)=0.
\end{equation}
Let $d=\dim\cM$, $m=\codim\cM$ and $\psi$ be a monotonic approximation function such that the series \eqref{conv2}
converges and
\begin{equation}\label{conv3}
  \sum_{t=1}^\infty \ \left(\frac{\psi(e^t)}{e^{\frac t2}}\right)^{s-d}(\psi(e^t)^ne^{\frac{3t}{2}})^{-\alpha}<\infty\qquad\text{where }\alpha:=\tfrac{1}{d(2l-1)(n+1)}\,.
\end{equation}
Then
\begin{equation}\label{e:012}
 \cH^s(\cS_n(\psi)\cap\cM)=0\,.
\end{equation}
\end{theorem}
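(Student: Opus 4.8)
The plan is to run the argument from the proof of Theorem~\ref{T1} verbatim, but to replace the Lebesgue measure estimates for the sets $A_t$ and $B_t$ appearing there by estimates for their $s$-dimensional Hausdorff content; this is made possible precisely because Theorem~\ref{t1.3} presents the major-arc set $\fM(\ve,t)$ as a union of balls of \emph{explicit} radius $\ve e^{-t/2}$ with bounded overlap. First I would make a local reduction. By \eqref{cond1} the set of $\vv y\in\cM$ at which $\cM$ fails to be $l$-nondegenerate has zero $\cH^s$-measure, while its complement $L$ is open in $\cM$ (near each of its points $\cM$ has a $C^l$ parametrisation that is $l$-nondegenerate on a whole neighbourhood, an open condition), hence a countable union of chart-balls. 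By countable subadditivity of $\cH^s$ it therefore suffices to show $\cH^s\big(\cS_n(\psi)\cap\vv f(B_0)\big)=0$ for each ball $B_0$ as in Theorem~\ref{t1.3}, centred at a point $\vv x_0$ where $\vv f$, put in the normalised form \eqref{eq3.2}--\eqref{lemma_translation_M}, is $l$-nondegenerate. Since $\vv f$ is Lipschitz on $B_0$ one has $\cH^s(\vv f(E))\ll\cH^s(E)$, and as $\cS_n(\psi)\cap\vv f(B_0)=\vv f(E_0)$ with $E_0:=\{\vv x\in B_0:\vv f(\vv x)\in\cS_n(\psi)\}$, it remains to bound $\cH^s(E_0)$.

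Next I would invoke the covering produced by Lemma~\ref{l1.4}: for every $T\ge1$,
\[
E_0\ \subset\ \bigcup_{t\ge T}\big(A_t\cup B_t\big),
\]
where $A_t=\fM(\ve_t,t)\cap B_0$, $B_t$ is the union of balls of radius $\psi(e^{t-1})/e^{t-1}$ about the (first $d$ coordinates of the) rational points counted by $N\big(B_0\setminus\fM(\ve_t,t);\ve_t,t\big)$, and $\ve_t:=e\,\psi(e^{t-1})$, which lies in $(0,1)$ for all large $t$. Both $A_t$ and $B_t$ are unions of balls of controlled radius, and for $A_t$ of controlled multiplicity, so their $s$-content can be read off. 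For $A_t$: by Theorem~\ref{t1.3} the balls forming $\fM(\ve_t,t)$ have radius $\ve_t e^{-t/2}$ and overlap $\le N_d$, so (after replacing $B_0$ by a concentric smaller ball, so that those meeting $B_0$ stay inside it for large $t$) their number is $\ll(\ve_t e^{-t/2})^{-d}\cL_d\big(\fM(\ve_t,t)\cap B_0\big)\ll(\ve_t^{\,n}e^{3t/2})^{-\alpha}(\ve_t e^{-t/2})^{-d}$ by \eqref{eq1.4}; hence, reindexing $t\mapsto t+1$ and absorbing the constant $e$,
\[
\sum_{\text{balls of }A_t}(\diam)^s\ \ll\ \Big(\tfrac{\psi(e^{t})}{e^{t/2}}\Big)^{s-d}\big(\psi(e^{t})^{\,n}e^{3t/2}\big)^{-\alpha}.
\]
For $B_t$: by \eqref{eq1.5} the number of balls is $\ll\psi(e^{t-1})^{m}e^{(d+1)t}$, each of diameter $\asymp\psi(e^{t-1})/e^{t}$, so after the same reindexing
\[
\sum_{\text{balls of }B_t}(\diam)^s\ \ll\ \psi(e^{t})^{m+s}e^{(d+1-s)t}.
\]

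Finally I would sum over $t\ge T$. The totality of the balls above covers $E_0$, and all their diameters tend to $0$ as $t\to\infty$, so for large $T$ this is an admissible cover for $\cH^s$; its total $s$-content is bounded, up to a constant, by
\[
\sum_{t\ge T}\Big(\tfrac{\psi(e^{t})}{e^{t/2}}\Big)^{s-d}\big(\psi(e^{t})^{\,n}e^{3t/2}\big)^{-\alpha}\ +\ \sum_{t\ge T}\psi(e^{t})^{m+s}e^{(d+1-s)t}.
\]
The first series is a tail of the series in \eqref{conv3}. For the second, a Cauchy-condensation comparison in the spirit of Lemma~\ref{l1.5} applies: for $q\in[e^{t-1},e^{t})$ one has, regardless of the sign of $d-s$, $q^{\,d-s}\psi(q)^{s+m}\gg e^{t(d-s)}\psi(e^{t})^{s+m}$ and there are $\gg e^{t}$ such integers, so $\sum_{t}\psi(e^{t})^{m+s}e^{(d+1-s)t}\ll\sum_{q}q^{\,d-s}\psi(q)^{s+m}=\sum_{q}q^{n}(\psi(q)/q)^{s+m}<\infty$ by hypothesis \eqref{conv2}. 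Both tails tend to $0$ as $T\to\infty$, so $\cH^s(E_0)=0$, and \eqref{e:012} follows. The local reduction, the Lipschitz transfer, and the $B_t$-count are routine (the latter is exactly as in the proof of Theorem~\ref{T1}); the one genuinely new point, and the one I expect to demand the most care, is converting the Lebesgue bound \eqref{eq1.4} on $\fM$ into a bound on the Hausdorff $s$-content of $A_t$ — this works only because Theorem~\ref{t1.3} describes $\fM$ as a union of balls of the explicit radius $\ve e^{-t/2}$ with bounded multiplicity, and it is precisely this step that forces the summability hypothesis \eqref{conv3}; the remaining delicacy is the bookkeeping of exponents so that the $A_t$-content reproduces the summand of \eqref{conv3} and the $B_t$-content condenses correctly against \eqref{conv2}.
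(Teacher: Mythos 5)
Your proposal is correct and follows essentially the same route as the paper: the same decomposition into $A_t$ and $B_t$ via Lemma~\ref{l1.4}, the same conversion of the Lebesgue bound \eqref{eq1.4} into a count of $\ll(\ve_t e^{-t/2})^{-d}\cL_d(\fM\cap B_0)$ balls of radius $\ve_t e^{-t/2}$ using the bounded-multiplicity ball structure of $\fM$, and the same condensation of the $B_t$-contribution against \eqref{conv2} (the paper's \eqref{eq2.17}). The extra care you flag (Lipschitz transfer, shrinking $B_0$, reindexing) is routine and consistent with what the paper does implicitly.
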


And the following statement is our key result on Hausdorff dimension.

\begin{corollary}\label{cor2.10}
Let $n\ge2$ be an integer, $\cM$ be a submanifold of $\R^n$ of dimension $d$, which is $l$--nondegenerate everywhere except possibly on a set of Hausdorff dimension $\le \frac{n+1}{\tau+1}-\codim\cM$. Let $\tau\ge 1/n$ satisfy
\begin{equation}\label{eq2.14}
  \frac{n\tau-1}{\tau+1}\le \frac{\alpha(3-2n\tau)}{2\tau+1}\,,
\end{equation}
where $\alpha$ is the same as in \eqref{conv3}. Then
\begin{equation}\label{mainthm2}
  \dim(\cM\cap\cS_n(\tau))=\frac{n+1}{\tau+1}-\codim\cM\,.
\end{equation}
\end{corollary}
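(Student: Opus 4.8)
The plan is to derive Corollary~\ref{cor2.10} by pairing the upper bound that comes out of Theorem~\ref{T2} with the classical lower bound \eqref{dimconjL}. Throughout write $m=\codim\cM$, $d=\dim\cM$ and $s_0:=\frac{n+1}{\tau+1}-m$ for the conjectured value of the dimension. The case $\tau=1/n$ is trivial, because $\cS_n(1/n)=\R^n$ by Dirichlet's theorem and hence $\cM\cap\cS_n(1/n)=\cM$ has dimension $d=s_0$; so I would assume $\tau>1/n$, in which case $s_0<d$. I would also note at the outset the elementary fact that \eqref{eq2.14} forces $n\tau\le\tfrac32$: for $\tau>1/n$ its left side $\frac{n\tau-1}{\tau+1}$ is strictly positive, hence so is its right side $\frac{\alpha(3-2n\tau)}{2\tau+1}$, which gives $3-2n\tau>0$. (Recall also that $l\ge2$, since a $d$--dimensional nondegenerate submanifold of $\R^n$ with $d<n$ can never be $1$--nondegenerate; thus $2l-1\ge3$.)

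For the \textbf{upper bound} I would show that $\cH^s(\cM\cap\cS_n(\tau))=0$ for every $s>s_0$, which gives $\dim(\cM\cap\cS_n(\tau))\le s_0$. For $s>d$ this is immediate since $\cH^s(\cM)=0$, so the substance is to apply Theorem~\ref{T2} with $\psi=\psi_\tau$ and $s_0<s\le d$, verifying its three hypotheses. Condition \eqref{cond1} holds because the set where $\cM$ is not $l$--nondegenerate has Hausdorff dimension $\le s_0<s$. With $\psi(q)=q^{-\tau}$ the series \eqref{conv2} equals $\sum_q q^{\,n-(s+m)(\tau+1)}$, which converges precisely when $s+m>\frac{n+1}{\tau+1}$, i.e.\ $s>s_0$. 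And substituting $\psi(e^t)=e^{-\tau t}$ collapses \eqref{conv3} to a geometric series $\sum_t e^{-ct}$ with $c=(\tau+\tfrac12)(s-d)+\alpha(\tfrac32-n\tau)$; dividing by $\tau+\tfrac12>0$ shows $c>0$ is equivalent to $s>d-\frac{\alpha(3-2n\tau)}{2\tau+1}$. The one computation I would carry out explicitly rather than by inspection is the rearrangement (using $d+m=n$) that identifies the inequality $d-\frac{\alpha(3-2n\tau)}{2\tau+1}\le s_0$ with \eqref{eq2.14} itself; granting this, $s>s_0$ forces $c>0$, all three hypotheses of Theorem~\ref{T2} hold, and $\cH^s(\cM\cap\cS_n(\tau))=0$.

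For the \textbf{lower bound} I would first check that \eqref{eq2.14} keeps $\tau$ inside $[1/n,1/(n-d))$, the range where \eqref{dimconjL} is available. Suppose $\tau\ge1/(n-d)$. Since \eqref{eq2.14} already gives $\tau<3/(2n)$, this is possible only when $1/(n-d)<3/(2n)$, i.e.\ $3d<n$; in that regime I would bound the left side of \eqref{eq2.14} below by $\frac{2nd}{(n-d)(2n+3)}$ (using $n\tau-1\ge d/(n-d)$ and $\tau+1<(2n+3)/(2n)$) and its right side above by $3\alpha\le\frac1{d(n+1)}$ (using $2l-1\ge3$). The resulting inequality $\frac{2nd}{(n-d)(2n+3)}\le\frac1{d(n+1)}$, i.e.\ $2nd^2(n+1)\le(n-d)(2n+3)$, fails for every $n>d\ge1$ (already $2n(n+1)>(n-1)(2n+3)$), a contradiction. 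Hence $1/n\le\tau<1/(n-d)$, and applying \eqref{dimconjL} --- which holds for every $C^2$ submanifold --- to a coordinate patch of the open $C^l$ submanifold of $\cM$ on which $\cM$ is $l$--nondegenerate yields $\dim(\cM\cap\cS_n(\tau))\ge s_0$. Combined with the upper bound this proves \eqref{mainthm2}.

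Since Theorem~\ref{T2} is taken as given, no step carries genuine analytic difficulty --- the work is arithmetic bookkeeping. I expect the fiddliest point to be the verification in the last paragraph that the feasibility region cut out by \eqref{eq2.14} never pushes $\tau$ beyond $1/(n-d)$ for any admissible triple $(n,d,l)$, and I would isolate it as a short standalone lemma on the $\tau$-range; the second most delicate point is the exact matching of the $\psi_\tau$-specialisation of \eqref{conv3} with \eqref{eq2.14}, which is why I would do that rearrangement on paper rather than claim it by inspection.
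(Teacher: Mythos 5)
Your proposal is correct and follows essentially the same route as the paper: the lower bound comes from \eqref{dimconjL} and the upper bound from Theorem~\ref{T2} applied to $\psi_\tau$ for each $s>\frac{n+1}{\tau+1}-\codim\cM$, with the specialisation of \eqref{conv3} matching \eqref{eq2.14} exactly as you computed. The only difference is cosmetic: where you rule out $\tau\ge 1/(n-d)$ by a case analysis, the paper observes that the left side of \eqref{eq2.14} is increasing and the right side decreasing, and that the inequality already fails at $\tau=1/(n-1)$ (since there the left side equals $1/n$ while the right side is $<\alpha\le 1/(n+1)$), so the solution set is a closed interval inside $[\tfrac1n,\tfrac1{n-1})$.
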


Similarly to Theorem~\ref{T1} the proof of Theorem~\ref{T2} is a rather simple consequence of our main result on rational points. We make no delay in showing its details.

\begin{proof}[Proof of Theorem~\ref{T2} modulo Theorem~\ref{t1.3}]
First of all note that since $\psi:\R\to\R$ is monotonic, by Cauchy's condensation test, we have that
\begin{equation}\label{eq2.17}
  \sum_{q=1}^\infty \
 q^n\Big(\frac{\psi(q)}q\Big)^{s+m}<\infty\qquad\iff\qquad \sum_{t=1}^\infty \
 e^{(n+1)t}\Big(\frac{\psi(e^t)}e^t\Big)^{s+m}<\infty\,.
\end{equation}
As before, without loss of generality we consider $\cM$ of the form $\vv f(\cU)$, where $\vv f:\cU\to\R^n$ is a nondegenerate immersion of an open subset $\cU\subset\R^d$. By \eqref{cond1}, it suffices to prove that
$$
\cH^s\big(\big\{\vv x\in B_0:\vv f(\vv x)\in\cS_n(\psi)\big\}\big)=0
$$
whenever $\psi$ is monotonic, \eqref{sum} converges and \eqref{conv3} holds, where $B_0$ is a sufficiently small ball centred at $\vv x_0\in\cU$ where $\vv f$ is $l$--nondegenerate at $\vv x_0$. Fix $B_0$ as in Theorem~\ref{T1}.
By \eqref{l1.4}, for any $T\ge1$ we have inclusion \eqref{eq1.10}.
By Theorem~\ref{t1.3} with $\ve=e\psi(e^{t-1})$, the set $A_t$ can be covered by
$$
\ll \left(\psi(e^{t-1}) e^{-(t-1)/2}\right)^{-d}\left(\psi(e^{t-1})^{n}e^{\frac{3(t-1)}{2}}\right)^{-\frac{1}{d(2l-1)(n+1)}}
$$
balls of radius $\psi(e^{t-1}) e^{-(t-1)/2}$. Furthermore, by Theorem~\ref{t1.3}, we also have that the set $B_t$ is the union of $\ll\psi(e^{t-1})^{m}e^{(d+1)(t-1)}$ balls of radius $\ll \frac{\psi(e^{t-1})}{e^{t-1}}$. Hence, by the definition of $s$-dimensional Hausdorff measure, we get that
$$
\cH^s\left(\big\{\vv x\in B_0:\vv f(\vv x)\in\cS_n(\psi)\big\}\right)\ll \sum_{t\ge T}\psi(e^{t-1})^{m}e^{(d+1)(t-1)} \;\cdot\;\left(\frac{\psi(e^{t-1})}{e^{t-1}}\right)^s+
$$
\begin{equation}\label{eq2.15}
+\sum_{t\ge T}\left(\psi(e^{t-1}) e^{-(t-1)/2}\right)^{s-d}\left(\psi(e^{t-1})^{n}e^{\frac{3(t-1)}{2}}\right)^{-\frac{1}{d(2l-1)(n+1)}}\,.
\end{equation}
The first sum equals
$$
\sum_{t\ge T}e^{(n+1)(t-1)} \left(\frac{\psi(e^{t-1})}{e^{t-1}}\right)^{s+m}
$$
and, by \eqref{eq2.17}, tends to zero. The second sum in \eqref{eq2.15} also tends to zero as a consequence of \eqref{conv3}. Hence, $\cH^s\left(\big\{\vv x\in B_0:\vv f(\vv x)\in\cS_n(\psi)\big\}\right)=0$ and the proof is complete.
\end{proof}

\begin{proof}[Proof of Corollary~\ref{cor2.10}]
Let $L(\tau)$ denote the left hand side of \eqref{eq2.14} and $R(\tau)$ denote the right hand side of
\eqref{eq2.14}. First observe that $L(\tau)$ is increasing and $R(\tau)$ is decreasing. Next, note that $L(1/n)=0$ while $R(1/n)>0$. Also, observe that $L(1/(n-1))=1/n$ while $R(1/(n-1))<\alpha\le 1/(n+1)<L(1/(n-1))$. Hence, the set of solutions to \eqref{eq2.14} is a closed interval $I_{d,l,n}\subset[\frac1n,\frac1{n-1})$. In particular, for any $\tau\in I_{d,l,n}$ estimate \eqref{dimconjL} is applicable and therefore, to prove \eqref{mainthm2}, we only need to prove the complementary upper bound. To this end, let $s>\frac{n+1}{\tau+1}-\codim\cM$ and $\psi(q)=q^{-\tau}$. Then it is readily seen that \eqref{conv2} is convergent. Furthermore, by \eqref{eq2.14}, one easily verifies condition \eqref{conv3}. Condition \eqref{cond1} is also satisfied since $\cM$ is $l$--nondegenerate everywhere except possibly on a set of Hausdorff dimension $<s$. Hence, Theorem~\ref{T2} is applicable and we conclude that $\cH^s(\cS_n(\tau)\cap\cM)=0$. By definition, it means that $\dim (\cS_n(\tau)\cap\cM)\le s$. Since $s>\frac{n+1}{\tau+1}-\codim\cM$ is arbitrary we obtain the require upper bound and complete the proof.
\end{proof}

\begin{remark}\rm
It is not difficult to see that the monotonicity of $\psi$ was only used to apply the Cauchy condensation test to establish \eqref{eq2.17} and to replace $\psi(q)/q$ with $\psi(e^{t-1})/e^{t-1}$ in Diophantine inequalities. The requirement that $\psi$ is monotonic within Theorems~\ref{T1} and \ref{T2} can therefore be replaced with a weaker assumption. For instance, one can replace the monotonicity of $\psi$ with the following requirement: there exist a constant $C>0$ such that
$$
\psi(q)\le C \psi(e^{t-1})\qquad\text{for $e^{t-1}\le q< e^t$}\,.
$$
In fact the use of the sequence $e^t$ is not critical and it can be replaced by any other sequence $s_t>0$ such that $1<\liminf_{t\to\infty}s_t/s_{t-1}\le \limsup_{t\to\infty}s_t/s_{t-1}<\infty$.
\end{remark}

\subsection{Spectrum of Diophantine exponents}\label{sec2.3}

Now let us describe the implications of our results for a problem of Bugeaud and Laurent regarding the spectrum of the following Diophantine exponent introduced in \cite{MR2149403}.
Given $x\in\R$, let
$$
\lambda_n(x):=\sup\left\{\tau>0:(x,x^2,\dots,x^n)\in \cS_n(\tau)\right\}
$$
be the exponent of simultaneous rational approximations to $n$ consecutive powers of a real number $x$. By Dirichlet's theorem, we have that $\lambda_n(x)\in[\frac1n,+\infty]$ for any $x\in\R$. The spectrum of $\lambda_n$ is defined as
$$
\spec(\lambda_n):=\lambda_n(\R\setminus\Q)=\left\{\lambda\in\left[\tfrac1n,+\infty\right]:\exists\;x\in\R\setminus\Q\;\text{with}\;\lambda_n(x)=\lambda\right\}\,.
$$
In 2007 Bugeaud and Laurent posed the following problem.

\begin{problem}[Bugeaud-Laurent {\cite[Prob.~5.5]{MR2349650}}]\label{p2.10}
Is\; $\spec(\lambda_n)=[\frac1n,+\infty]$\;?
\end{problem}

The following more subtle version of this problem was
later raised in \cite{MR2791654}:

\begin{problem}[Bugeaud {\cite[Problem~3.5]{MR2791654}}]\label{p2.11}
For every $\lambda\ge\tfrac1n$ determine $\dim\{x\in\R:\lambda_n(x)=\lambda\}$ and $\dim\{x\in\R:\lambda_n(x)\ge\lambda\}$.
\end{problem}

To begin with, note that, by Sprind\v zuk's theorem \cite{Sprindzuk-1969-Mahler-problem}, $\lambda_n(x)=\tfrac1n$ for almost all $x\in\R$. In particular, $\tfrac1n\in\spec(\lambda_n)$ for every $n$.
For $n=1$ Problem~\ref{p2.10} is relatively simple and can be solved, for instance, using continued fractions, while the answer to Problem~\ref{p2.11} is provided by the \Jarnik-Besicovitch theorem stated at the start of \S\ref{sec1.2}. For $n=2$, Problem~\ref{p2.11}, and consequently Problem~\ref{p2.10}, was solved in \cite{MR2373145} and \cite{MR2604984}.
In turn, Bugeaud \cite{MR2791654} showed that $[1,+\infty]\subset\spec(\lambda_n)$ for any $n$ using explicit examples, while Schleischitz \cite{MR3450571} resolved Problem~\ref{p2.11} for $\lambda>1$. The most significant challenge within Problems~\ref{p2.10} and \ref{p2.11} is posed by the values of $\lambda$ in the spectrum of $\lambda_n$ which are $<1$. The first step in this direction was made by Schleischitz \cite{MR3724166} who proved that $\spec(\lambda_3)$ contains points $<1$. Most recently, Badziahin and Bugeaud \cite{MR4089038} made a major achievement by showing that
$$
\left[\tfrac{n+4}{3n},+\infty\right]\subset\spec(\lambda_n)\quad\text{for every $n\ge3$}
$$
and resolving Problem~\ref{p2.11} for $\lambda\ge\frac{n+4}{3n}$. Corollary~\ref{cor2.10} of our paper makes a first step in closing the gap in the spectrum of $\lambda_n$ from the other end, namely for the values $\lambda$ close to the Dirichlet exponent $1/n$. To produce an explicit statement we now specialise Corollary~\ref{cor2.10} to curves. First of all we state and prove the following proposition which allows us to fix the nondegeneracy parameter $l$.

\begin{proposition}\label{prop2.13}
Let $\vv f:\cU\to\R^n$ be $l$-nondegenerate at $x_0\in\cU$, where $\cU$ is an interval in $\R$. Then there is an interval $B_0$ centred at $x_0$ and a countable subset $S\subset B_0$ such that $\vv f$ is $n$-nondegenerate at every point $x\in B_0\setminus S$.
\end{proposition}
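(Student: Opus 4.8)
The plan is to prove, by induction on $n$, the following reformulation: \emph{if $\vv g:J\to\R^n$ is a $C^l$ curve on an interval $J$ that is $l$-nondegenerate at every point of $J$ (here necessarily $l\ge n$), then the set $Z(\vv g):=\{x\in J:\vv g\text{ is not }n\text{-nondegenerate at }x\}$ is countable}. Granting this, Proposition~\ref{prop2.13} follows at once: since $\vv f$ is $l$-nondegenerate at $x_0$, some $n$ of the vectors $\vv f'(x_0),\dots,\vv f^{(l)}(x_0)$, say $\vv f^{(i_1)}(x_0),\dots,\vv f^{(i_n)}(x_0)$ with $i_1<\dots<i_n\le l$, form a basis of $\R^n$; the map $x\mapsto\det[\vv f^{(i_1)}(x),\dots,\vv f^{(i_n)}(x)]$ is continuous and non-zero at $x_0$, hence non-zero on some interval $B_0$ centred at $x_0$, so $\vv f$ is $l$-nondegenerate at every point of $B_0$, and the reformulation applied to $\vv f|_{B_0}$ gives the required countable set $S=Z(\vv f|_{B_0})$.

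The elementary device behind both the base case and the inductive step is Taylor's theorem with Peano remainder: a $C^p$ function whose Taylor coefficients at a point $x_1$ of orders $0,\dots,p-1$ vanish while the one of order $p$ does not behaves like a non-zero multiple of $(x-x_1)^p$ near $x_1$, hence does not vanish in a punctured neighbourhood of $x_1$. This settles the base case $n=1$: at any zero $x_1$ of $g'$ the $l$-nondegeneracy forces a smallest $p$ with $2\le p\le l$ and $g^{(p)}(x_1)\ne0$, so $g'\in C^{p-1}$ vanishes to finite order $p-1$ at $x_1$; thus $x_1$ is isolated in $\{g'=0\}=Z(g)$, which is therefore discrete and countable.

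For the inductive step $n-1\Rightarrow n$ (with $n\ge2$), the same device shows that $S_0:=\{x\in J:\vv g'(x)=0\}$ is discrete, hence countable, because at such a point $l$-nondegeneracy makes some coordinate of $\vv g'$ vanish to finite order, so $\vv g'$ is non-zero in a punctured neighbourhood. On the open set $J\setminus S_0$ we have $\vv g'\ne0$, and we cover it by countably many open subintervals on each of which a fixed coordinate $g_\sigma$ has non-vanishing derivative. On such a subinterval $g_\sigma$ is a $C^l$ diffeomorphism onto its image $J'$; reparametrising by $\phi:=(g_\sigma)^{-1}$ and permuting coordinates turns $\vv g$ into a graph curve $\vv G(u)=(u,\mv G(u))$ on $J'$. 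Nondegeneracy of any fixed order is invariant under $C^l$ reparametrisation (by the chain rule, $\vv G^{(j)}(u)$ equals $(\phi'(u))^j\,\vv g^{(j)}(\phi(u))$ modulo lower-order derivatives of $\vv g$, and $\phi'\ne0$) and under linear coordinate changes, so $\vv G$ is $l$-nondegenerate at every point of $J'$; moreover, since $\vv G^{(1)}=(1,\mv G')$ and $\vv G^{(j)}=(0,\mv G^{(j)})$ for $j\ge2$, for $k\ge2$ the curve $\vv G$ is $k$-nondegenerate at $u$ precisely when $\vv h:=\mv G':J'\to\R^{n-1}$ is $(k-1)$-nondegenerate at $u$. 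Hence $\vv h$ is $C^{l-1}$ and $(l-1)$-nondegenerate everywhere, and the inductive hypothesis gives that the set where $\vv h$ fails to be $(n-1)$-nondegenerate---equivalently where $\vv G$, and thus $\vv g$, fails to be $n$-nondegenerate on the subinterval---is countable. Taking the union over the countably many subintervals, together with $S_0$, shows $Z(\vv g)$ is countable.

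The routine parts are the Taylor-remainder estimate and the reparametrisation invariance of nondegeneracy. The one point requiring care, which also dictates the shape of the argument, is that the reduced curve $\vv h$ must inherit $(l-1)$-nondegeneracy \emph{at every point} of $J'$ for the induction to close; this is exactly why the statement is first upgraded from "$l$-nondegenerate at $x_0$" to "$l$-nondegenerate everywhere on $B_0$" before the reduction, since the reparametrisations used there are only local. I expect this bookkeeping---keeping the "everywhere" hypothesis alive through the dimension reduction---rather than any single computation, to be the crux.
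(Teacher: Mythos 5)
Your argument is correct, but it takes a genuinely different route from the paper's. The paper's proof is a one-step Wronskian argument: after shrinking to an interval $B_0$ on which the first $l$ derivatives span $\R^n$, it sets $\varphi=\det[\vv f^{(1)},\dots,\vv f^{(n)}]$ and peels off the nested zero sets $S_i=\{x\in S_{i-1}:\varphi^{(i)}(x)=0\}$, each difference $S_{i-1}\setminus S_i$ being a set of isolated points by a Rolle/mean-value lemma, until $S_{l-n}=\varnothing$. Your induction on the ambient dimension $n$, via the graph reparametrisation $\vv g\mapsto(u,\mv G(u))\mapsto\vv h=\mv G'$, replaces the higher derivatives of the Wronskian by a dimension reduction; the two devices play the same role, trading one order of smoothness for a drop in the order of degeneracy. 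What your route buys is that it isolates and disposes of the set $\{\vv g'=0\}$ up front: this is exactly the locus where $\varphi$ and its first few derivatives can vanish for trivial reasons even at points of $l$-nondegeneracy (for $\vv f(x)=(x^2/2,\,x^3/6)$ one has $\varphi=x^2/2$, so $\varphi(0)=\varphi'(0)=0$ although $\vv f$ is $3$-nondegenerate at $0$), so the Wronskian argument as stated needs either this case excluded or more derivatives of $\varphi$, whereas your treatment of $S_0$ covers it cleanly. What it costs is the bookkeeping you yourself flag --- propagating the ``$l$-nondegenerate everywhere'' hypothesis through the local reparametrisations, and checking that $\phi=(g_\sigma)^{-1}$ is $C^l$ so that $\vv h$ is $C^{l-1}$ and $(l-1)$-nondegenerate with $l-1\ge n-1$ --- and those checks are all present and correct in your write-up.
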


This proposition is a standard exercise in analysis relying on the following

\begin{lemma}\label{l2.14}
If $\varphi:\cU\to\R$ is a $C^1$ function on an interval $\cU$ and $\cN(\varphi):=\{x\in\cU:\varphi(x)=0\}$ then $\cN(\varphi)\setminus\cN(\varphi')$ consists on isolated points.
\end{lemma}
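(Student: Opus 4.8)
The plan is to prove the slightly stronger statement that every point of $\cN(\varphi)\setminus\cN(\varphi')$ is isolated \emph{within $\cN(\varphi)$ itself}, which certainly implies the claim. So I would fix an arbitrary $x_0\in\cN(\varphi)\setminus\cN(\varphi')$, i.e.\ a point with $\varphi(x_0)=0$ and $\varphi'(x_0)\neq0$, and show that $x_0$ has a neighbourhood in $\cU$ containing no other zero of $\varphi$.

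First I would reduce to the case $\varphi'(x_0)>0$ by replacing $\varphi$ with $-\varphi$ if necessary, which changes neither $\cN(\varphi)$ nor $\cN(\varphi')$. Then, since $\varphi$ is $C^1$ on $\cU$, the derivative $\varphi'$ is continuous, so there is $\delta>0$ with $(x_0-\delta,x_0+\delta)\subset\cU$ and $\varphi'>0$ throughout this interval; hence $\varphi$ is strictly increasing on $(x_0-\delta,x_0+\delta)$. Finally, because $\varphi(x_0)=0$, strict monotonicity forces $\varphi(x)<0$ for $x\in(x_0-\delta,x_0)$ and $\varphi(x)>0$ for $x\in(x_0,x_0+\delta)$, so $x_0$ is the unique zero of $\varphi$ in $(x_0-\delta,x_0+\delta)$. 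Thus $x_0$ is isolated in $\cN(\varphi)$, and a fortiori in $\cN(\varphi)\setminus\cN(\varphi')$; since $x_0$ was arbitrary, this set consists of isolated points.

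There is no real obstacle here: the argument is a direct consequence of the continuity of $\varphi'$ together with the monotonicity principle. The only minor points requiring care are the harmless sign reduction and spelling out precisely what ``isolated point'' means relative to the ambient interval, both of which are routine.
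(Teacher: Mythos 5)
Your proof is correct, and it proves the same strengthened statement the paper does (that each point of $\cN(\varphi)\setminus\cN(\varphi')$ is isolated in all of $\cN(\varphi)$), but by a different mechanism. The paper argues by contradiction: if $x_0$ were a limit point of $\cN(\varphi)$, one picks zeros $x_k\to x_0$ and applies the Mean Value Theorem (in effect Rolle's theorem) on each segment $[x_0,x_k]$ to produce points $\tilde x_k\to x_0$ with $\varphi'(\tilde x_k)=0$; continuity of $\varphi'$ then forces $\varphi'(x_0)=0$, contradicting $x_0\notin\cN(\varphi')$. You instead argue directly: continuity of $\varphi'$ plus $\varphi'(x_0)\neq 0$ give a relative neighbourhood on which $\varphi'$ has constant sign, hence $\varphi$ is strictly monotone there and $x_0$ is its unique zero. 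Both routes rest on the continuity of $\varphi'$ (and, under the hood, on the Mean Value Theorem, which is what makes ``$\varphi'>0$ on an interval implies strictly increasing'' work), so the difference is one of packaging rather than substance; your version is arguably a bit more constructive, while the paper's sequential argument avoids the sign-splitting step. The only cosmetic caveat in yours, which you already flag, is that if $x_0$ is an endpoint of $\cU$ one should replace $(x_0-\delta,x_0+\delta)$ by its intersection with $\cU$; this changes nothing.
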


\begin{proof}
If $x_0\in \cN(\varphi)\setminus\cN(\varphi')$ is a limit point of $\cN(\varphi)$ then there is a sequence $x_k\in\cU\cap \cN(\varphi)\setminus\{x_0\}$ converging to $x_0$. By the Mean Value Theorem, $\varphi'(\tilde x_k)(x_k-x_0)=\varphi(x_k)-\varphi(x_0)=0$, where $\tilde x_k$ between $x_k$ and $x_0$. Thus, $\varphi'(\tilde x_k)=0$. Letting $k\to\infty$ and using the continuity of $\varphi'$ gives $\varphi'(x_0)=\varphi'(\lim_{k\to\infty}\tilde x_k)=\lim_{k\to\infty}\varphi'(\tilde x_k)=0$. However, $x_0\not\in \cN(\varphi')$. Thus $x_0$ cannot be a limit point of $\cN(\varphi)$.
\end{proof}

\begin{proof}[Proof of Proposition~\ref{prop2.13}]
Since $\vv f$ is $l$--nondegenerate at $x_0$, we have that $\rank\{\vv f^{(i)}(x_0):1\le i\le l\}=n$. Since $\vv f$ is $C^l$ there is an interval centred at $x_0$ such that $\rank\{\vv f^{(i)}(x):1\le i\le l\}=n$ for all $x\in B_0$. If $l=n$ there the statement is obvious. Thus we will assume that $l>n$. Let $\varphi(x):=\det(f^{(i)}_j(x))_{1\le i,j\le n}$ be the Wronskian of $\vv f'(x)$. Let $S_0=\{x\in B_0:\varphi(x)=0\}$ and for $i=1,\dots,l-n$ let $S_i=\{x\in S_{i-1}:\varphi^{(i)}(x)=0\}$. By definition, $S_0\supset S_1\supset\dots\supset S_{l-n}$. By the choice of $B_0$, we must have that $S_{l-n}=\varnothing$. By Lemma~\ref{l2.14}, $S_{i-1}\setminus S_i$ is countable for every $1\le i\le l-n$. Hence $S_0=(S_0\setminus S_1)\cup\dots\cup(S_{n-l-1}\setminus S_{l-n})$ is countable and the proof is complete.
\end{proof}

In view of Proposition~\ref{prop2.13}, we can always apply Corollary~\ref{cor2.10} to nondegenerate curves with $l=n$. This gives the following statement.

\begin{corollary}\label{cor2.15}
Let $n\ge2$ be an integer, $\cC$ be a curve in $\R^n$, which is nondegenerate everywhere except possibly on a set of Hausdorff dimension $\le \frac{n+1}{\tau+1}-n+1$. Let $\tau\ge 1/n$ satisfy
\begin{equation}\label{f}
  \frac{n\tau-1}{\tau+1}\le \frac{3-2n\tau}{(2\tau+1)(2n-1)(n+1)}\,.
\end{equation}
Then
\begin{equation}\label{eqn2.20}
  \dim(\cC\cap\cS_n(\tau))=\frac{n+1}{\tau+1}-n+1\,.
\end{equation}
\end{corollary}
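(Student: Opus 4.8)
The plan is to deduce this directly from Corollary~\ref{cor2.10} by specialising to the case $d=1$, $\codim\cM=n-1$, and, crucially, fixing the nondegeneracy parameter to $l=n$. The key observation that makes this specialisation legitimate is Proposition~\ref{prop2.13}: if $\cC$ is nondegenerate everywhere except on a set $E$ of Hausdorff dimension $\le\frac{n+1}{\tau+1}-n+1$, then by that proposition every point of $\cC\setminus E$ at which $\cC$ is nondegenerate actually has a neighbourhood on which $\cC$ is $n$-nondegenerate except on a countable (hence dimension-zero) set. Covering $\cC\setminus E$ by countably many such neighbourhoods, we conclude that $\cC$ is $n$-nondegenerate everywhere except on a set of Hausdorff dimension $\le\frac{n+1}{\tau+1}-n+1$ (the countable exceptional sets contribute nothing since $\frac{n+1}{\tau+1}-n+1>0$ for $\tau<\frac1{n-1}$, which holds in the relevant range). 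Thus the hypothesis \eqref{cond1}-type condition of Corollary~\ref{cor2.10} is met with $l=n$.

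Next I would substitute $d=1$ and $l=n$ into the definition $\alpha=\frac{1}{d(2l-1)(n+1)}$ from \eqref{conv3}, obtaining $\alpha=\frac{1}{(2n-1)(n+1)}$. Plugging this value of $\alpha$ into condition \eqref{eq2.14} of Corollary~\ref{cor2.10}, namely $\frac{n\tau-1}{\tau+1}\le\frac{\alpha(3-2n\tau)}{2\tau+1}$, turns it into exactly \eqref{f}. Likewise, the conclusion \eqref{mainthm2} becomes $\dim(\cC\cap\cS_n(\tau))=\frac{n+1}{\tau+1}-(n-1)$, which is \eqref{eqn2.20}. Finally, one checks that the set of $\tau\ge1/n$ satisfying \eqref{f} is nonempty: at $\tau=1/n$ the left side vanishes while the right side is $\frac{\alpha(3-2)}{2/n+1}=\frac{\alpha}{1+2/n}>0$, so \eqref{f} holds at (and, by the monotonicity argument in the proof of Corollary~\ref{cor2.10}, in a right-neighbourhood of) $\tau=1/n$; this also confirms that the exceptional-set dimension bound $\frac{n+1}{\tau+1}-n+1$ is strictly positive throughout, as used above.

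I anticipate no serious obstacle here; the statement is essentially a clean specialisation of the already-established Corollary~\ref{cor2.10}. The only point requiring a little care is the passage from "nondegenerate" to "$n$-nondegenerate off a null set" via Proposition~\ref{prop2.13}, since nondegeneracy at a point only gives $l$-nondegeneracy for \emph{some} $l$ that may vary from point to point; but Proposition~\ref{prop2.13} and a countable covering of $\cC$ handle this uniformly, and the fact that $\frac{n+1}{\tau+1}-n+1>0$ in the relevant $\tau$-range ensures the newly introduced countable exceptional sets do not affect the dimension hypothesis. With that in hand, the arithmetic of substituting $\alpha=\frac{1}{(2n-1)(n+1)}$ into \eqref{eq2.14} is routine and yields \eqref{f} and \eqref{eqn2.20} immediately.
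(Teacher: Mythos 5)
Your proposal is correct and follows essentially the same route as the paper, which deduces Corollary~\ref{cor2.15} from Corollary~\ref{cor2.10} by setting $d=1$ and invoking Proposition~\ref{prop2.13} to fix $l=n$, so that $\alpha=\frac{1}{(2n-1)(n+1)}$ and \eqref{eq2.14} becomes \eqref{f}. Your additional care with the countable exceptional sets produced by Proposition~\ref{prop2.13} (and the positivity of $\frac{n+1}{\tau+1}-n+1$ in the relevant range) is a legitimate filling-in of details the paper leaves implicit, not a different argument.
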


On letting $n\tau=1+\delta$, \eqref{f} transforms into
$$
  \frac{\delta}{n+1+\delta}\le \frac{1-2\delta}{(n+2+2\delta)(2n-1)(n+1)}
$$
or equivalently
\begin{equation}\label{f2}
\delta^2(4n^2+2n)+\delta(2n^3+5n^2+3n-1)-n-1\le0\,.
\end{equation}
Solving \eqref{f2} we get that
\begin{equation}\label{deltan}
0\le \delta\le \delta_n:=\frac{\sqrt{D_n}-B_n}{2A_n}
\end{equation}
where
\begin{align*}
  A_n & = 4n^2+2n\,,\\
  B_n & = 2n^3+5n^2+3n-1\,,\\
  D_n & = 4n^6+20n^5+37n^4+42n^3+23n^2+2n+1\,.
\end{align*}
By \eqref{f2}, we also have that
$$
\delta_n<\frac{1}{2n^2+5n}\,.
$$
This also means that the first term in \eqref{f2} is $<1$ for $n\ge3$ and therefore \eqref{f2} will hold whenever
\begin{equation}\label{f3}
\delta(2n^3+5n^2+3n-1)\le n\,.
\end{equation}
Also observe that $6n^2\ge 5n^2+3n-1$ for $n\ge3$. Hence \eqref{f3} is implied provided that $\delta(2n^2+6n)< 1$. Therefore
\begin{equation}\label{delta_n}
  \frac{1}{2n^2+6n} < \delta_n < \frac{1}{2n^2+5n}\,.
\end{equation}

\begin{corollary}[The spectrum of $\lambda_n$]
For every $n\ge3$
$$
\left[\frac1n,\frac1n+\frac{\delta_n}{n}\right]\;\subset\spec(\lambda_n)\,,
$$
where $\delta_n$ is given by \eqref{deltan} and can be estimated by \eqref{delta_n}.
\end{corollary}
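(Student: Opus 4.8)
The plan is to deduce the spectral statement directly from Corollary~\ref{cor2.15} applied to the Veronese curve $\cC_n := \{(x,x^2,\dots,x^n):x\in\R\}$, which is analytic and nondegenerate at every point (hence the exceptional set in Corollary~\ref{cor2.15} is empty). First I would observe that, by the definition of $\lambda_n$, for any $\tau>\tfrac1n$ one has
\[
\{x\in\R:\lambda_n(x)\ge\tau\}\;\supset\;\bigcap_{\tau'<\tau}\{x:(x,\dots,x^n)\in\cS_n(\tau')\}
\]
and $\{x:\lambda_n(x)=\tau\}\supset\{x:(x,\dots,x^n)\in\cS_n(\tau)\}\setminus\bigcup_{\tau'>\tau}\{x:(x,\dots,x^n)\in\cS_n(\tau')\}$. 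The key input is that, by Corollary~\ref{cor2.15} with $\cC=\cC_n$, for every $\tau\in[\tfrac1n,\tfrac1n+\tfrac{\delta_n}{n}]$ (equivalently $n\tau=1+\delta$ with $0\le\delta\le\delta_n$, which is exactly the range in which \eqref{f2} and hence \eqref{f} holds) we have
\[
\dim\big(\cC_n\cap\cS_n(\tau)\big)=\frac{n+1}{\tau+1}-n+1=\frac{n+1}{\tau+1}-(n-1)>0,
\]
the positivity being immediate since $\tau<\tfrac1{n-1}$ forces $\tfrac{n+1}{\tau+1}>n-1$.

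Next I would convert this dimension statement into the existence of irrational $x$ with $\lambda_n(x)=\tau$. Fix $\tau$ in the stated range with $\tau>\tfrac1n$. Since $\dim(\cC_n\cap\cS_n(\tau))>0$, the set $E_\tau:=\{x\in\R:(x,\dots,x^n)\in\cS_n(\tau)\}$ has positive Hausdorff dimension, in particular is uncountable. For each $\tau'>\tau$ the set $E_{\tau'}$, if $\tau'$ still lies in the admissible range, has dimension $\tfrac{n+1}{\tau'+1}-(n-1)<\tfrac{n+1}{\tau+1}-(n-1)=\dim E_\tau$; and for $\tau'$ beyond the range one still has $\dim E_{\tau'}\le\dim E_\tau$ by monotonicity of $\cS_n(\cdot)$, but one needs strictness. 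The clean way is: the set $\{x:\lambda_n(x)>\tau\}=\bigcup_{k}E_{\tau+1/k}$ is a countable union of sets each of Hausdorff dimension $\le\tfrac{n+1}{\tau+1/k+1}-(n-1)<\dim E_\tau$ (using the exact value from Corollary~\ref{cor2.15} when $\tau+1/k$ is still admissible, and the lower bound \eqref{dimconjL2} / trivial monotonicity otherwise combined with the Jarn\'\i k--Besicovitch-type upper bound $\dim\cS_n(\tau')\cap\cC_n<\dim E_\tau$ for larger $\tau'$), so $\dim\{x:\lambda_n(x)>\tau\}<\dim E_\tau$; consequently $E_\tau\setminus\{x:\lambda_n(x)>\tau\}$ is nonempty (indeed has dimension $\dim E_\tau$), and any $x$ in it satisfies $\lambda_n(x)=\tau$. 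Finally, such $x$ is automatically irrational because for rational $x$ we have $\lambda_n(x)=+\infty$. This gives $\tau\in\spec(\lambda_n)$ for every $\tau\in(\tfrac1n,\tfrac1n+\tfrac{\delta_n}{n}]$, and the left endpoint $\tfrac1n$ is in the spectrum by Sprind\v zuk's theorem as noted in the text; hence $[\tfrac1n,\tfrac1n+\tfrac{\delta_n}{n}]\subset\spec(\lambda_n)$.

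The bound $\tfrac1{2n^2+6n}<\delta_n<\tfrac1{2n^2+5n}$ is precisely \eqref{delta_n}, already established in the excerpt, so the explicit estimate is immediate once $\delta_n$ is identified as the positive root in \eqref{deltan}.

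The main obstacle I anticipate is not the algebra (the admissible range for $\delta$ is handed to us in \eqref{f2}--\eqref{deltan}) but the strict separation of dimensions needed to pass from ``$\dim(\cC_n\cap\cS_n(\tau))>0$'' to ``$\exists x$ with $\lambda_n(x)$ \emph{exactly} $\tau$'': one must make sure that for $\tau'$ slightly above $\tau$ the set $\cC_n\cap\cS_n(\tau')$ has strictly smaller dimension, which in the admissible window is guaranteed by the exact formula in Corollary~\ref{cor2.15}, but requires a short argument (monotonicity of $\tau\mapsto\tfrac{n+1}{\tau+1}-(n-1)$ together with a covering/Borel--Cantelli upper bound for $\dim(\cC_n\cap\cS_n(\tau'))$ when $\tau'$ exits the window). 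Since we only claim the closed interval $[\tfrac1n,\tfrac1n+\tfrac{\delta_n}{n}]$ is \emph{contained} in the spectrum, the $=\tau$ refinement could in fact be bypassed: it suffices that for each such $\tau$ there exists irrational $x$ with $\lambda_n(x)=\tau$, and an alternative is to invoke Problem~\ref{p2.11}-type reasoning, but the dimension-drop argument above is self-contained given the results of the paper.
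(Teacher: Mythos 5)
Your skeleton --- specialise Corollary~\ref{cor2.15} to the Veronese curve, translate the admissible range of $\tau$ into \eqref{f2} and the root $\delta_n$, and then produce, for each admissible $\tau$, an irrational $x$ with $\lambda_n(x)$ \emph{exactly} $\tau$ --- is the right one, and the algebra identifying $\delta_n$ and the bounds \eqref{delta_n} is fine. But the step you yourself single out as the crux is wrong as written. Setting $E_\tau:=\{x\in\R:(x,\dots,x^n)\in\cS_n(\tau)\}$, you claim $\dim\bigcup_k E_{\tau+1/k}<\dim E_\tau$ because each term has dimension $\frac{n+1}{\tau+1/k+1}-(n-1)<\dim E_\tau$. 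By countable stability of Hausdorff dimension, however, $\dim\bigcup_k E_{\tau+1/k}=\sup_k\dim E_{\tau+1/k}$, and these dimensions increase to $\frac{n+1}{\tau+1}-(n-1)=\dim E_\tau$ as $k\to\infty$; so the union has dimension \emph{equal} to $\dim E_\tau$ and no strict drop occurs. Consequently the nonemptiness of $E_\tau\setminus\{x:\lambda_n(x)>\tau\}$ does not follow from the dimension formula alone. Your proposed ``bypass'' does not rescue this: membership of $\tau$ in $\spec(\lambda_n)$ \emph{is} the statement that some irrational $x$ has $\lambda_n(x)=\tau$, which is exactly the refinement you suggest skipping.

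The missing ingredient is the divergence \emph{Hausdorff measure} statement, Theorem~\ref{t2.6}, used at the critical exponent. For $\tau$ in the range, put $s_\tau=\frac{n+1}{\tau+1}-(n-1)=1-\frac{n\delta}{n+1+\delta}$ (with $n\tau=1+\delta$); one checks $s_\tau\in(\frac12,1)$ and $\tau<\frac{3}{2n-1}$, and the series \eqref{conv2} with $\psi=\psi_\tau$, $m=n-1$, $s=s_\tau$ is $\sum_q q^{-1}$, which diverges, so $\cH^{s_\tau}(E_\tau)=\infty$. On the other hand, for each fixed $k$ one has $\dim E_{\tau+1/k}<s_\tau$ (by Corollary~\ref{cor2.15} if $\tau+1/k$ is admissible, and by $E_{\tau+1/k}\subset E_{\tau''}$ for an admissible $\tau''\in(\tau,\tau+1/k)$ otherwise), whence $\cH^{s_\tau}(E_{\tau+1/k})=0$ and, by countable subadditivity of $\cH^{s_\tau}$, $\cH^{s_\tau}\bigl(\{x:\lambda_n(x)>\tau\}\bigr)=0$. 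Thus $E_\tau\setminus\{x:\lambda_n(x)>\tau\}$ has infinite $\cH^{s_\tau}$-measure, and any of its points is irrational with $\lambda_n(x)=\tau$. Note that this argument, unlike the pure dimension count, genuinely needs positivity of the critical-exponent measure, i.e.\ Theorem~\ref{t2.6}; note also that at the right endpoint $\tau=\frac1n+\frac{\delta_n}{n}$ the comparison $\dim E_{\tau'}<s_\tau$ for $\tau'>\tau$ requires an extra word, since \eqref{f} holds there with equality and the upper bound of Theorem~\ref{T2} is only available for exponents strictly above $s_\tau$.
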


\medskip

\section{Preliminaries}\label{sec3}

\subsection{Notation and conventions}

First let us agree on some notation that we will use throughout the rest of the paper.
By $\I_k$ we will denote the identity $k\times k$ matrix. Throughout $\|\cdot\|$ and $\|\cdot\|_\infty$ will denote the Euclidean and supremum norms on $\R^k$ respectively. Given $r>0$ and $\vv x\in\R^d$, by $B(\vv x,r)$ we will denote the Euclidean ball in $\R^d$ of radius $r$ centred at $\vv x$, and respectively, by $\cB(\vv x,r)$ we will denote the $\|\;\|_\infty$--ball of radius $r$ centred at $\vv x$, which for obvious reasons will be referred to as a hypercube.

We will use the Vinogradov and Bachmann--Landau notations: for
functions $f$ and positive-valued functions $g$, we write $f \ll g$ or $f = O(g)$ if
there exists a constant $C$ such that $|f| \le Cg$ pointwise. We will write $f \asymp g$ if $f \ll g$ and $g \ll f$.
Throughout $G = \SL(n+1, \R)$ and $\Gamma = \SL(n+1, \Z)$. Then the homogeneous space
$X_{n+1} := \SL(n+1, \R)/\SL(n+1, \Z)$ can be identified with the set of all unimodular
lattices in $\R^{n+1}$, where the coset $g\Gamma$ in $X_{n+1}$ corresponds to the lattice $g\Z^{n+1}$ in $\R^{n+1}$. Note that the column vectors of $g$ form a basis of $g\Z^{n+1}$.

\subsection{Preliminaries from the geometry of numbers}\label{sec-Preliminaries}

Given a lattice $\Lambda \in X_{n+1}$ and an integer $1\le i\le n+1$, let
\begin{equation}\label{eq:lambda}
\lambda_i(\Lambda):= \inf\Big\{\lambda>0: B(\vv 0,\lambda) \cap \Lambda \text{ contains $i$ linearly independent vectors} \Big\}\,.
\end{equation}
In other words, $\lambda_1(\Lambda)\le \dots\le\lambda_{n+1}(\Lambda)$ are the {\em successive minima}\/ of the closed unit ball $B(\vv 0,1)$ with respect to the lattice $\Lambda$.


Recall that, given a lattice $\Lambda\in X_{n+1}$, its {\em polar lattice} is defined as follows:
\begin{equation}\label{polar}
\Lambda^*=\{\vv a\in\R^{n+1}:\vv a\cdot\vv b\in\Z\text{ for every }\vv b\in\Lambda\}\,.
\end{equation}
The following lemma is well known, e.g. see \cite[Thm 21.5]{Gruber-2007}.

\begin{lemma}
Let $g \in G$. Then
$$
\left(g\Z^{n+1}\right)^\ast= (g^{\T})^{-1} \Z^{n+1},
$$
where $(g^{\T})^{-1}$ is the inverse of the transpose of $g$.
\end{lemma}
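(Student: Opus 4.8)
The plan is to unwind both sides directly from the definition of the polar lattice in \eqref{polar}, using only the self-duality of the standard lattice $\Z^{n+1}$ and the invertibility of $g$.

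First I would recall that every element of $g\Z^{n+1}$ has the form $g\vv m$ with $\vv m\in\Z^{n+1}$, and that for $\vv a\in\R^{n+1}$ the pairing transforms as $\vv a\cdot(g\vv m)=(g^{\T}\vv a)\cdot\vv m$. Hence, directly from \eqref{polar}, we have $\vv a\in(g\Z^{n+1})^{*}$ if and only if $(g^{\T}\vv a)\cdot\vv m\in\Z$ for every $\vv m\in\Z^{n+1}$. Next I would invoke the elementary fact that $(\Z^{n+1})^{*}=\Z^{n+1}$: a vector $\vv c\in\R^{n+1}$ pairs integrally with every $\vv m\in\Z^{n+1}$ precisely when each of its coordinates (read off by letting $\vv m$ run through the standard basis) is an integer. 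Applying this with $\vv c=g^{\T}\vv a$, the condition becomes $g^{\T}\vv a\in\Z^{n+1}$, that is, $\vv a\in(g^{\T})^{-1}\Z^{n+1}$; here $(g^{\T})^{-1}$ exists because $g\in G=\SL(n+1,\R)$ is invertible. This yields the set equality $(g\Z^{n+1})^{*}=(g^{\T})^{-1}\Z^{n+1}$.

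There is no genuine obstacle here; the only point requiring a moment's care is the placement of the transpose (ensuring one gets $(g^{\T})^{-1}$ and not, say, $g^{-1}$ or $g^{\T}$), which is pinned down by the computation $\vv a\cdot(g\vv m)=(g^{\T}\vv a)\cdot\vv m$ above. Alternatively, the statement can simply be quoted from \cite[Thm 21.5]{Gruber-2007}, as already indicated in the text.
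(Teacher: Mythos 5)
Your proposal is correct. The paper does not actually prove this lemma --- it simply cites it as well known from \cite[Thm 21.5]{Gruber-2007} --- and your direct argument (transposing the pairing via $\vv a\cdot(g\vv m)=(g^{\T}\vv a)\cdot\vv m$ and invoking the self-duality $(\Z^{n+1})^{*}=\Z^{n+1}$) is the standard, complete elementary proof of exactly this statement.
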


Given a convex body $\cC$ in $\R^{n+1}$ symmetric about $\vv0$, one defines the polar body
$$
\cC^*=\{\vv y\in\R^{n+1}:\vv x\cdot\vv y\le 1\text{ for all }\vv y\in\cC\}\,.
$$
It is readily seen that $B(\vv0,1)^*=B(\vv0,1)$. Then, the following theorem on successive minima of the polar lattice is a direct consequence of a more general result of Mahler, see \cite[Thm 23.2]{Gruber-2007}.

\begin{theorem}[Mahler, see {\cite[Thm 23.2]{Gruber-2007}}]\label{Mahler}
Let $\Lambda$ be any lattice in $\R^{n+1}$. Then for every $1\le i\le n+1$ we have that
$$
1\le \lambda_i(\Lambda)\lambda_{n+2-i}(\Lambda^*)\le (n+1)!^2\,.
$$
\end{theorem}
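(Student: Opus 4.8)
The plan is to prove the two inequalities in the displayed chain separately; throughout write $N=n+1$. The lower bound $\lambda_i(\Lambda)\lambda_{N+1-i}(\Lambda^*)\ge1$ is the easy half and uses nothing beyond the integrality of the dot product between a lattice and its polar. The upper bound is the substantive half: I would deduce it from Minkowski's second theorem on successive minima, applied to both $\Lambda$ and $\Lambda^*$ with respect to the Euclidean unit ball, combined with the standard relation $d(\Lambda)\,d(\Lambda^*)=1$ between the covolumes of a lattice and its polar.

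For the lower bound, recall that the infimum defining each $\lambda_j$ is attained by a lattice vector, so there are linearly independent $\vv b_1,\dots,\vv b_i\in\Lambda$ with $\|\vv b_k\|\le\lambda_i(\Lambda)$ and linearly independent $\vv a_1,\dots,\vv a_{N+1-i}\in\Lambda^*$ with $\|\vv a_j\|\le\lambda_{N+1-i}(\Lambda^*)$. Put $V=\mathrm{span}\{\vv b_1,\dots,\vv b_i\}$, so $\dim V^{\perp}=N-i<N+1-i$; hence the $N+1-i$ independent vectors $\vv a_j$ cannot all lie in $V^{\perp}$, and there are indices $j_0,k_0$ with $\vv a_{j_0}\cdot\vv b_{k_0}\ne0$. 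Since $\vv a_{j_0}\in\Lambda^*$ and $\vv b_{k_0}\in\Lambda$ this dot product is a nonzero integer, and Cauchy--Schwarz gives $1\le|\vv a_{j_0}\cdot\vv b_{k_0}|\le\|\vv a_{j_0}\|\,\|\vv b_{k_0}\|\le\lambda_{N+1-i}(\Lambda^*)\lambda_i(\Lambda)$. As $N+1-i=n+2-i$, this is the lower bound.

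For the upper bound I would apply Minkowski's second theorem with the Euclidean unit ball $B=B(\vv 0,1)\subset\R^N$ to $\Lambda$ and to $\Lambda^*$, obtaining $\prod_{k=1}^N\lambda_k(\Lambda)\le 2^N d(\Lambda)/\vol(B)$ and $\prod_{k=1}^N\lambda_k(\Lambda^*)\le 2^N d(\Lambda^*)/\vol(B)=2^N/\big(d(\Lambda)\vol(B)\big)$. Multiplying these and reindexing the second product by $k\mapsto N+1-k$ yields
$$
\prod_{k=1}^N\big(\lambda_k(\Lambda)\lambda_{N+1-k}(\Lambda^*)\big)\le\frac{4^N}{\vol(B)^2}\,.
$$
By the lower bound every factor on the left is $\ge1$, so any single factor is bounded above by the entire product; hence $\lambda_i(\Lambda)\lambda_{n+2-i}(\Lambda^*)\le 4^N/\vol(B)^2$ for every $i$. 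It then remains to verify the purely numerical inequality $4^N\le(N!)^2\vol(B)^2$; substituting $\vol(B)=\pi^{N/2}/\Gamma(N/2+1)$ reduces this to $\Gamma(N+1)\ge(4/\pi)^{N/2}\Gamma(N/2+1)$, which follows at once from the Legendre duplication formula $\Gamma(N+1)=2^N\pi^{-1/2}\Gamma(\tfrac{N+1}2)\Gamma(\tfrac N2+1)$ together with the trivial estimate $\Gamma(\tfrac{N+1}2)\ge\pi^{(1-N)/2}$, valid for all $N\ge1$.

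I do not expect a genuine obstacle: the statement is classical and is exactly the specialisation $K=K^*=B$ of Gruber's general transference theorem \cite[Thm 23.2]{Gruber-2007}, which one could simply cite. In the self-contained route the only points needing care are using the correct form and orientation of the inequalities in Minkowski's second theorem, reindexing the minima of $\Lambda^*$ properly, and checking that the crude constant $4^{n+1}/\vol(B)^2$ is indeed absorbed by $(n+1)!^2$ — all routine, but easy to get backwards.
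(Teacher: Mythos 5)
Your proof is correct. Note, though, that the paper does not actually prove this statement: it simply observes that $B(\vv0,1)^*=B(\vv0,1)$ and quotes the result as the specialisation of Mahler's general transference theorem for a convex body and its polar (Gruber, Thm 23.2) to the Euclidean ball — exactly the citation route you mention at the end. Your self-contained argument is a genuinely different (and standard) derivation: the lower bound via the integrality of $\vv a\cdot\vv b$ for $\vv a\in\Lambda^*$, $\vv b\in\Lambda$, a dimension count forcing a nonzero pairing, and Cauchy--Schwarz; the upper bound by multiplying Minkowski's second theorem for $\Lambda$ and $\Lambda^*$, using $d(\Lambda)d(\Lambda^*)=1$, and then bounding a single factor by the whole product since every factor is $\ge1$ by the lower bound. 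All steps check out, including the numerical verification $4^{n+1}\le\big((n+1)!\big)^2\vol(B)^2$ via the duplication formula (with equality at $n=0$, i.e.\ $N=1$, and ample room for $N\ge2$ since $\pi^{(1-N)/2}\le\pi^{-1/2}<\Gamma(\tfrac{N+1}{2})$ there). What your approach buys is a proof from first principles with an explicit, slightly better constant $4^{n+1}/\vol(B)^2$; what the citation buys is brevity and the sharper constants available in the literature. Either is acceptable here, since only the qualitative transference inequality is used in the paper.
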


Given $k\in\N$, define the following square $k\times k$ matrix:
\[ \sigma_k = \begin{bmatrix}
  0 & 0 & \dots & 0 & 1 \\
  0 & 0 & \dots & 1 & 0 \\
  \vdots & \vdots & \ddots & \vdots & \vdots \\
  0 & 1 & \dots & 0 & 0 \\
  1 & 0 & \dots & 0 & 0
\end{bmatrix}. \]
In the case $k=n+1$ we will simply write $\sigma$ instead of $\sigma_{n+1}$. Note that $\sigma_k$ is an involution, that is $\sigma_k^{-1}=\sigma_k$. Also note that $\sigma_k$ acts on row-vectors on the right and column-vectors on the left by placing their coordinates in the reverse order. Furthermore, we have that $g\Z^{n+1}=g\sigma\Z^{n+1}$ and
$\sigma^{-1}(B(\vv0,\lambda))=B(\vv0,\lambda)$ for every $\lambda>0$. Therefore for every $g\in G$
\begin{equation}\label{eq2.4}
  \lambda_i(g\Z^{n+1})=\lambda_i(\sigma^{-1}g\sigma\Z^{n+1})\,.
\end{equation}

Given $g \in G$, we will define the {\em dual} of $g$, denoted by $g^{\ast}$, by
\begin{equation}\label{eq:dual-group}
  g^{\ast} := \sigma^{-1}(g^{\T})^{-1} \sigma\,.
\end{equation}
It is readily seen that the dual of the product of matrices equals the product of dual matrices, that is
\begin{equation}\label{eq:dual-group-prop}
  (g_1g_2)^{\ast} = g_1^*g_2^*\qquad\text{for any }g_1,g_2\in G\,.
\end{equation}
Further, in view of equation \eqref{eq2.4}, Theorem~\ref{Mahler} implies the following

\begin{lemma}
For any $g\in G$ and every $1\le i\le n+1$ we have that
$$
1\le \lambda_i(g\Z^{n+1})\lambda_{n+2-i}(g^*\Z^{n+1})\le (n+1)!^2\,.
$$
\end{lemma}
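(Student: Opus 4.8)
The plan is to obtain the statement as a direct bookkeeping consequence of Mahler's inequality (Theorem~\ref{Mahler}), after rewriting the polar lattice of $g\Z^{n+1}$ in terms of the dual matrix $g^*$. First I would apply Theorem~\ref{Mahler} to the unimodular lattice $\Lambda=g\Z^{n+1}$, which yields
$$
1\le \lambda_i(g\Z^{n+1})\,\lambda_{n+2-i}\big((g\Z^{n+1})^*\big)\le (n+1)!^2 \qquad (1\le i\le n+1)\,.
$$
By the lemma above identifying the polar lattice of $g\Z^{n+1}$ we have $(g\Z^{n+1})^*=(g^{\T})^{-1}\Z^{n+1}$, so the only remaining task is to check that $\lambda_{n+2-i}\big((g^{\T})^{-1}\Z^{n+1}\big)=\lambda_{n+2-i}(g^*\Z^{n+1})$.

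For this I would use the definition \eqref{eq:dual-group} of the dual: $g^*\Z^{n+1}=\sigma^{-1}(g^{\T})^{-1}\sigma\,\Z^{n+1}=\sigma^{-1}(g^{\T})^{-1}\Z^{n+1}$, where I used $\sigma\Z^{n+1}=\Z^{n+1}$. Since $\sigma^{-1}=\sigma$ is a permutation matrix it is orthogonal and hence fixes each ball $B(\vv0,\lambda)$, so conjugation by $\sigma$ leaves all successive minima unchanged — this is exactly the observation already recorded in \eqref{eq2.4}, applied here to the lattice $(g^{\T})^{-1}\Z^{n+1}$. Therefore $\lambda_{n+2-i}(g^*\Z^{n+1})=\lambda_{n+2-i}\big((g^{\T})^{-1}\Z^{n+1}\big)$, and substituting this into the displayed inequality gives the claim.

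There is no genuine obstacle here: the result is simply the combination of Mahler's bound, the formula for the polar lattice, and the invariance of successive minima under the orthogonal map $\sigma$. The only points that need a moment's care are tracking the index reversal $i\leftrightarrow n+2-i$ and confirming that conjugating by $\sigma$ (as opposed to merely multiplying by $(g^{\T})^{-1}$) does not affect the $\lambda_j$'s; both are immediate from the fact that $\sigma$ permutes coordinates and hence preserves the Euclidean norm.
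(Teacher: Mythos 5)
Your proposal is correct and follows exactly the paper's route: the lemma is stated there as an immediate consequence of Theorem~\ref{Mahler} applied to $g\Z^{n+1}$, the identification $(g\Z^{n+1})^*=(g^{\T})^{-1}\Z^{n+1}$, and the invariance of successive minima under conjugation by $\sigma$ recorded in \eqref{eq2.4}. No gaps.
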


\subsection{A quantitative non-divergence estimate}

We will make use of a version of the quantitative non-divergence estimate on the space of lattices due to Bernik, Kleinbock and Margulis \cite[Theorem~6.2]{Bernik-Kleinbock-Margulis-01:MR1829381}. To be more precise, we will use a consequence of this non-divergence estimate appearing as Theorem~1.4 in \cite{Bernik-Kleinbock-Margulis-01:MR1829381}. Below we state it in a slightly simplified form which fully covers our needs. In what follows $\nabla$ stands for the gradient of a real-valued function.

\begin{theorem}[See {\cite[Theorem~1.4]{Bernik-Kleinbock-Margulis-01:MR1829381}}]\label{thm:KM}
Let $\cU\subset\R^d$ be open, $\vv x_0\in\cU$ and $\vv f:\cU\to\R^n$ be $l$--nondegenerate at $\vv x_0$. Then there exists a ball $B_0\subset\cU$ centred at $\vv x_0$ and a constant $E\ge1$ such that for any choice of
\begin{equation}\label{eq3.10}
0<\delta\le 1,\qquad T\ge1\quad\text{and}\quad K>0 \quad\text{satisfying}\quad \delta^{n}<K T^{n-1}
\end{equation}
the Lebesgue measure of the set
\begin{equation}\label{eq3.11}
\mathfrak{S}_{\vv f}(\delta,K,T):=\left\{\vv x\in B_0:\exists\;(a_0,\vv a)\in\Z\times\Z^n\;\text{such that}\; \left.
\begin{array}{l}
|a_0+\vv f(\vv x)\vv a^T|<\delta\\[1ex]
\|\nabla \vv f(\vv x)\vv a^T\|_\infty<K\\[1ex]
0<\|\vv a\|_\infty<T
\end{array}
\right.\right\}
\end{equation}
satisfies the inequality
$$
\cL_d\big(\mathfrak{S}_{\vv f}(\delta,K,T)\big)\;\le\;E \left(\delta K T^{n-1}\right)^{\frac{1}{d(2l-1)(n+1)}}\cL_d(B_0)\,.
$$
\end{theorem}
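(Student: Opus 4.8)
Up to cosmetic changes this is \cite[Theorem~1.4]{Bernik-Kleinbock-Margulis-01:MR1829381}: the original there is formulated for the wider class of ``$(C,\beta)$-good and nonplanar'' maps and in a multiplicative variant, whereas here we only need the non-multiplicative case and we have absorbed the structural hypotheses into the single requirement that $\vv f$ be $l$-nondegenerate at $\vv x_0$, together with the freedom to shrink $B_0$. The plan is therefore to recall how the estimate is obtained from the Kleinbock--Margulis quantitative non-divergence machinery \cite{Kleinbock-Margulis-98:MR1652916}, which also accounts for the shape of the exponent $\tfrac{1}{d(2l-1)(n+1)}$.

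First I would pass to a dynamical reformulation. For a fixed admissible triple $(\delta,K,T)$ one builds from $\vv f(\vv x)$, its first-order data, and the scales $\delta,K,T$ a lattice $\Lambda_{\vv x}$ in $\R^{n+1}$, depending continuously on $\vv x$, so that the existence of a nonzero $(a_0,\vv a)\in\Z\times\Z^n$ satisfying the three inequalities in \eqref{eq3.11} is equivalent to $\lambda_1(\Lambda_{\vv x})<1$. After the unimodular normalisation the only function of $(\delta,K,T)$ that survives is the normalised covolume-type quantity $\delta KT^{n-1}$, and the condition $\delta^{n}<KT^{n-1}$ from \eqref{eq3.10} is precisely the admissibility requirement that makes this encoding legitimate (equivalently $\delta<(\delta KT^{n-1})^{1/(n+1)}$, i.e.\ the finest scale is not below the geometric mean of all the scales). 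Thus $\mathfrak{S}_{\vv f}(\delta,K,T)=\{\vv x\in B_0:\lambda_1(\Lambda_{\vv x})<1\}$ and we must bound the measure of the set where the curve $\vv x\mapsto\Lambda_{\vv x}$ visits the cusp neighbourhood $\{\lambda_1<1\}$ of $X_{n+1}$.

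Next I would apply the abstract quantitative non-divergence estimate \cite[Theorem~6.2]{Bernik-Kleinbock-Margulis-01:MR1829381}. This amounts to verifying, on a small ball $B_0$ centred at $\vv x_0$, two properties of the finite family of functions $\varphi_V(\vv x)$ measuring the size, relative to $\Lambda_{\vv x}$, of the exterior-power vector attached to each rational subspace $V\subset\R^{n+1}$. (i)~Each $\varphi_V$ is $(C,\beta)$-good on $B_0$ with $\beta=\tfrac{1}{d(2l-1)}$: the $\varphi_V$ are polynomial expressions in $\vv f$ and its partials, hence are $C^l$, and the analytic heart is the Kleinbock--Margulis estimate showing that such functions coming from an $l$-nondegenerate map are $(C,\tfrac{1}{d(2l-1)})$-good on a sufficiently small ball, the factor $2l-1$ arising from an $l$-th order Taylor comparison combined with the product structure of the relevant minors. (ii)~For every rational $V$ one has $\sup_{B_0}\varphi_V$ bounded below by a fixed positive power of the height of $V$: this is exactly where $l$-nondegeneracy of $\vv f$ at $\vv x_0$ is used, since the partials of $\vv f$ up to order $l$ spanning $\R^n$ prevent any proper subspace from dominating the trajectory provided $B_0$ is small enough. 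The ball $B_0$ and the constant $E$ of the statement are produced at this stage.

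Putting these together, \cite[Theorem~6.2]{Bernik-Kleinbock-Margulis-01:MR1829381} outputs a bound of the form $\cL_d(\mathfrak{S}_{\vv f}(\delta,K,T))\le E\,(\delta KT^{n-1})^{\gamma}\cL_d(B_0)$, and tracking the exponent explains its shape: the reciprocal factor $d(2l-1)$ is that of the $(C,\beta)$-good exponent of the $C^l$ functions built from $\vv f$, while the extra factor $n+1$ reflects that, for the present three-scale system, the functions actually controlling the non-divergence are products of up to $n+1$ of the basic ones, and a product of $k$ functions that are $(C,\beta)$-good is $(C',\beta/k)$-good; hence
\[
\gamma=\frac{1}{d(2l-1)(n+1)}\,,
\]
which is the power in the statement. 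The main obstacle is precisely the non-divergence input itself: establishing the $(C,\beta)$-good property with the sharp exponent uniformly on a fixed ball, and deriving the lower bound in (ii) simultaneously for all (infinitely many) rational subspaces from the single nondegeneracy hypothesis at $\vv x_0$. As both are carried out in \cite{Bernik-Kleinbock-Margulis-01:MR1829381}, for the purposes of this paper it suffices to invoke \cite[Theorem~1.4]{Bernik-Kleinbock-Margulis-01:MR1829381} directly.
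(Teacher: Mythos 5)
The paper offers no proof of this statement: it is quoted verbatim (in slightly simplified form) from \cite[Theorem~1.4]{Bernik-Kleinbock-Margulis-01:MR1829381}, and your proposal likewise concludes by invoking that reference directly, so the two approaches coincide. Your accompanying sketch of the quantitative non-divergence machinery behind the BKM result is a reasonable account of where the exponent $\tfrac{1}{d(2l-1)(n+1)}$ and the admissibility condition $\delta^n<KT^{n-1}$ come from, but none of it is required for the purposes of this paper.
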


\medskip

\section{The generic and special parts}

\subsection{Dynamical reformulation}

Recall that
$$
\cR(\Delta;\ve,t)=\left\{(\vv p,q) \in \Z^{n+1}:0< q < e^t\text{ and }
  \exists\;\vv x \in \Delta\cap\cU\;\text{with}\;\vv f(\vv x)\in\cB\left(\frac{\vv p}{q},\frac{\ve}{e^t}\right)
\right\}.
$$
Our goal is to interpret the condition $\vv f(\vv x)\in\cB\left(\frac{\vv p}{q},\frac{\ve}{e^t}\right)$ in terms of properties of the action of $\gt$ on a certain lattice in $\R^{n+1}$.
With this goal in mind, given $\vv y = (y_1, \dots, y_n) \in \R^n$, define
\begin{equation}
  \label{eq:Uy}
  U(\vv y) := \begin{bmatrix}
  \I_n & \sigma_n^{-1} \vv y^T \\
   0 & 1
\end{bmatrix} = \begin{bmatrix}
  1  & & &  y_n \\
     &  \ddots & &  \vdots  \\
        & &  1 & y_1 \\
        & & &  1
\end{bmatrix}
\in G\,.
\end{equation}
Also given an $m\times d$ matrix $\Theta = [\theta_{i,j}]_{1\le i\le m,\;1\le j\le d} \in \R^{m \times d}$, let
\begin{equation}
  \label{eq:ZL}
  Z(\Theta) := \begin{bmatrix}
  \I_m & \sigma_m^{-1}\Theta\sigma_d & 0 \\
    0  & \I_d & 0 \\
    0 & 0 &  1
\end{bmatrix} = \begin{bmatrix} 1 & & &  \theta_{m, d} & \dots & \theta_{m, 1} & 0 \\
   &  \ddots & &  \vdots & \ddots & \vdots & \vdots \\
     & & 1 &  \theta_{1,d} & \dots & \theta_{1,1} & 0 \\
     & & & 1 & \dots & 0 & 0 \\
     & & & & \ddots & \vdots & \vdots  \\
     & & & & & 1 & 0 \\
    & & & & & &  1
\end{bmatrix} \in G.
\end{equation}
For each $t>0$ and $0<\ve<1$ define the following unimodular diagonal matrix
\begin{equation}
  \label{eq:gt}
  \gt:=\diag\Big\{\underbrace{\phi\ve^{-1},\dots,\phi\ve^{-1}}_n,\phi e^{-t}\Big\}\in G\,,
\end{equation}
where
\begin{equation}\label{eq:phit}
  \phi:=\Big(\ve^ne^t\Big)^{\frac{1}{n+1}}\,.
\end{equation}

Before moving on we state a couple of conjugation equations involving $\gt$.

\begin{lemma}\label{LemmaConjugate}
For any $t>0$, $\Theta\in \R^{m \times d}$ and $\vv y\in\R^n$ we have that
\begin{align}
  \label{eq:conju-g-u} \gt U(\vv y) \gt^{-1} &= U( e^t\ve^{-1}\vv y)\,, \\
  \label{eq:conju-g-z} \gt Z(\Theta) \gt^{-1} &= Z(\Theta)\,.
\end{align}
\end{lemma}

The proof is elementary and left to the reader.

\begin{lemma}\label{lm:psi-approximable-0}
Let $\vv y\in\R^n$. Then for any $t>0$, any $\Theta\in \R^{m \times d}$, if $\vv y\in\cB\left(\frac{\vv p}{q},\frac{\ve}{e^t}\right)$ for some $(\vv p,q)\in\Z^{n+1}$ with $0<q<e^t$ then
\begin{equation}\label{eqn3.7}
\|\gt Z(\Theta)U(\vv y) (-\vv p\sigma_n, q)^\T\|\le c_0 \phi\,,
\end{equation}
where
\begin{equation}\label{c_0}
c_0=\sqrt{n+1}\max_{1\le i\le m}(1+|\theta_{i,1}|+\dots+|\theta_{i,d}|)\,.
\end{equation}
\end{lemma}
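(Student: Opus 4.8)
The plan is to carry out the matrix multiplication directly. First I would evaluate the column vector $\vv v:=U(\vv y)\,(-\vv p\sigma_n,q)^\T$. Since $\sigma_n$ is symmetric we have $(-\vv p\sigma_n)^\T=-\sigma_n^{-1}\vv p^\T$, and multiplying this by the unipotent matrix $U(\vv y)$ from \eqref{eq:Uy} shows that the first $n$ entries of $\vv v$ form the vector $\sigma_n^{-1}(q\vv y^\T-\vv p^\T)$ while its last entry is $q$. Now the hypothesis $\vv y\in\cB(\vv p/q,\ve e^{-t})$ reads $\|\vv y-\vv p/q\|_\infty<\ve e^{-t}$; multiplying through by $q>0$ and using $q<e^t$ gives $\|q\vv y^\T-\vv p^\T\|_\infty<\ve$. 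Since $\sigma_n^{-1}$ only reverses the order of coordinates, it follows that each of the first $n$ entries of $\vv v$ has absolute value $<\ve$, while its last entry $q$ satisfies $0<q<e^t$.

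Next I would split $\R^{n+1}=\R^m\oplus\R^d\oplus\R$ (recall $n=m+d$) and accordingly write the first $n$ entries of $\vv v$ as $(\vv w^{(1)},\vv w^{(2)})$ with $\vv w^{(1)}\in\R^m$, $\vv w^{(2)}\in\R^d$, so that $\|\vv w^{(1)}\|_\infty<\ve$ and $\|\vv w^{(2)}\|_\infty<\ve$. Reading off the block structure of $Z(\Theta)$ from \eqref{eq:ZL} gives $Z(\Theta)\vv v=\big(\vv w^{(1)}+\sigma_m^{-1}\Theta\sigma_d\vv w^{(2)},\ \vv w^{(2)},\ q\big)$, and applying the diagonal matrix $\gt$ from \eqref{eq:gt} then scales the first $n$ coordinates by $\phi\ve^{-1}$ and the last by $\phi e^{-t}$. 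I would then bound the result block by block: the last coordinate $\phi e^{-t}q$ has absolute value $<\phi$ since $q<e^t$; each of the middle $d$ coordinates equals $\phi\ve^{-1}$ times an entry of $\vv w^{(2)}$ and so has absolute value $<\phi$; and for each of the first $m$ coordinates, since $\sigma_m^{-1}$ and $\sigma_d$ are permutation matrices the relevant entry of $\sigma_m^{-1}\Theta\sigma_d\vv w^{(2)}$ has absolute value at most $\ve(|\theta_{k,1}|+\dots+|\theta_{k,d}|)$ for the appropriate row index $k$, whence this coordinate of $\gt Z(\Theta)\vv v$ is bounded in absolute value by $\phi(1+|\theta_{k,1}|+\dots+|\theta_{k,d}|)\le \phi\,c_0/\sqrt{n+1}$. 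Hence all $n+1$ coordinates of $\gt Z(\Theta)U(\vv y)(-\vv p\sigma_n,q)^\T$ are $\le \phi\,c_0/\sqrt{n+1}$ in absolute value (using $c_0\ge\sqrt{n+1}$ for the last $d+1$ of them), so its Euclidean norm is $\le c_0\phi$, which is \eqref{eqn3.7}.

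The argument is entirely routine, and there is no substantive obstacle. The only point requiring care is the bookkeeping around the reversal matrices $\sigma_k$ and the block decomposition $\R^{n+1}=\R^m\oplus\R^d\oplus\R$ underlying $Z(\Theta)$; the key simplification is that each $\sigma_k$ is an involution that merely permutes coordinates, so it preserves the supremum norm, and the positivity of $q$ enters only through the manipulation of the inequality $\|\vv y-\vv p/q\|_\infty<\ve e^{-t}$.
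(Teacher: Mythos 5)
Your proof is correct and follows essentially the same route as the paper's: a direct verification by matrix multiplication in which the quantity $\max_{1\le i\le m}(1+|\theta_{i,1}|+\dots+|\theta_{i,d}|)$ appears as the operator norm of $Z(\Theta)$ with respect to the supremum norm, followed by the comparison $\|\cdot\|\le\sqrt{n+1}\|\cdot\|_\infty$. The only cosmetic difference is that the paper first records the bound $\|\gt U(\vv y)(-\vv p\sigma_n,q)^\T\|_\infty<\phi$ and then moves $Z(\Theta)$ past $\gt$ via the conjugation identity $\gt Z(\Theta)\gt^{-1}=Z(\Theta)$, whereas you multiply out the blocks in the original order; the underlying computation is identical.
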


\begin{proof}
To begin with, note that, by $\vv y\in\cB\left(\frac{\vv p}{q},\frac{\ve}{e^t}\right)$, we trivially have that
\begin{equation}\label{eq3.8}
  \| \gt  U(\vv y) (-\vv p\sigma_n, q)^\T \|_{\infty} < \phi\,.
\end{equation}
Then, using Lemma~\ref{LemmaConjugate} we get that
  \begin{align}
    \|\gt  Z(\Theta)U(\vv y) (-\vv p\sigma_n, q)^\T\|_\infty &\stackrel{\eqref{eq:conju-g-z}}{=} \|Z(\Theta) \gt  U(\vv y) (-\vv p\sigma_n, q)^\T\|_\infty \nonumber\\
    &\le \|Z(\Theta)\|_\infty\cdot\|\gt  u(\vv x) (-\vv p\sigma_n, q)^\T\|_\infty \nonumber\\
                                         &\stackrel{\eqref{eq3.8}}{\le} \|Z(\Theta)\|_\infty\cdot \phi\label{eq3.9}\,,
  \end{align}
where $\|Z(\Theta)\|_\infty$ is the operator norm of $Z(\Theta)$ as a linear transformation from $\R^{n+1}$ to itself equipped with the supremum norm.
As is well known $\|Z(\Theta)\|_\infty$ equals the maximum of $\ell_1$ norms of its rows, that is $\|Z(\Theta)\|_\infty=\max_{1\le i\le m}(1+|\theta_{i,1}|+\dots+|\theta_{i,d}|)$. Now, taking into account that $\|\vv a\|\le \sqrt{n+1}\|\vv a\|_\infty$ for any $\vv a\in\R^{n+1}$, we obtain \eqref{eqn3.7} immediately from \eqref{eq3.9}.
\end{proof}

Our next goal is to produce a similar statement when $\vv y=\vv f(\vv x)$, where $\vv f$ is as in \eqref{eq3.2} and subject to condition \eqref{lemma_translation_M}. To this end, for $\vv x = (x_1, \dots, x_d) \in \cU$, define
\begin{equation}\label{eq:vx}
  u(\vv x) := U(\vv f(\vv x))\,,
\end{equation}
where $U$ is given by \eqref{eq:Uy}, and let
\[ \J(\vv x) := \left[\frac{\partial f_{i} }{\partial x_j}(\vv x)\right]_{1\le i\le m,\;1\le j\le d} \in \R^{m \times d}  \]
denote the Jacobian of the map $\mv f(\vv x)=(f_1(\vv x),\dots,f_m(\vv x))$. Next, for $\vv x\in\cU$ define
\begin{equation}
  \label{eq:zx}
  z(\vv x) := Z(-\J(\vv x))\,,
\end{equation}
where $Z$ is given by \eqref{eq:ZL}, and finally let
\begin{equation}
  \label{eq:ux}
  \zu(\vv x) := z(\vv x) u(\vv x)\,.
\end{equation}
Explicitly, by the above definitions, we have that
\begin{equation}\label{zu}
\zu(\vv x)= \begin{bmatrix}
  \I_m & -\sigma_m^{-1}\J(\vv x)\sigma_d & \sigma_m^{-1}\mv h(\vv x)^T \\[1ex]
    0  & \I_d & \sigma_d^{-1}\vv x^T \\[1ex]
    0 & 0 &  1
\end{bmatrix}\,,
\end{equation}
where
$$
\mv h(\vv x)=(h_1(\vv x),\dots,h_m(\vv x))=\mv f(\vv x)-\J(\vv x)\vv x^T\,,
$$
that is
$$
h_i(\vv x)=f_i(\vv x)-\sum_{j=1}^dx_j\frac{\partial f_i(\vv x)}{\partial x_j}\quad(1\le i\le m)\,.
$$

\begin{lemma}\label{lm:psi-approximable}
Let $\vv x\in\cU$. If $\vv f(\vv x)\in\cB\left(\frac{\vv p}{q},\frac{\ve}{e^t}\right)$ for some $(\vv p,q)\in\Z^{n+1}$ with $0<q<e^t$ then
$$
\| \gt \zu(\vv x) (-\vv p\sigma_n, q)\| \le c_1 \phi,
$$
and in particular,
\begin{equation}\label{eqn4.10}
\lambda_1(\gt \zu(\vv x)\Z^{n+1})\le c_1 \phi\,,
\end{equation}
where
\begin{equation}\label{c_1}
c_1=\sqrt{n+1}(d+1)M\,.
\end{equation}
\end{lemma}

\begin{proof}
The proof is rather obvious and requires the following two observations. First, on setting $\Theta$ to be $-\J(\vv x)$ and $\vv y=\vv f(\vv x)$, by \eqref{eq:vx} and \eqref{eq:zx}, we get that
$$
Z(\Theta)U(\vv y)=z(\vv x)u(\vv x)=\zu(\vv x)\,.
$$
And second, the quantity
$\max_{1\le i\le m}(1+|\theta_{i,1}|+\dots+|\theta_{i,d}|)$ that appears in \eqref{c_0} is bounded by $(d+1)M$ in view of \eqref{lemma_translation_M}. The latter means that $c_0\le c_1$ and hence \eqref{eqn4.10} follows from \eqref{eqn3.7}.
\end{proof}

\subsection{The generic and special parts of a manifold}

Setting up the generic and special parts will require another diagonal action on $X_{n+1}$.
For each $t\in\N$ define the following diagonal matrix
\begin{equation}
\label{eq:dt}
b_t := \begin{bmatrix} e^{\frac{dt}{2(n+1)}} \I_{m} & & \\ & e^{-\frac{(m+1)t}{2(n+1)}}\I_d & \\ & & e^{\frac{dt}{2(n+1)}} \end{bmatrix} \in G\,.
\end{equation}
First define the `raw' set of the special part:
\begin{equation}\label{eq:minor0}
\fM_0(\ve,t):=\left\{\vv x\in \cU:\lambda_{n+1}\left( b_t \gt \zu({\vv x})\Z^{n+1}\right)> \phi\, e^{\frac{dt}{2(n+1)}}\right\}\,.
\end{equation}
Now define the \textbf{special part} as the following enlargement of $\fM_0(\ve,t)$ which will ensure the structural claim about $\fM(\ve,t)$ within Theorem~\ref{t1.3}\;:
\begin{equation}\label{eq:minor1}
\fM(\ve,t):=\bigcup_{\vv x\in \fM_0(\ve,t)}B\left(\vv x,\ve e^{-t/2}\right)\cap\cU\,.
\end{equation}
Naturally the \textbf{generic part} is the complement to $\fM(\ve,t)$\;:
\begin{equation}\label{Major}
\fM'(\ve, t):= \cU \setminus \fM(\ve,t).
\end{equation}

Before moving on we provide two further auxiliary statements. The first presents two conjugation equations involving $b_t$. For the rest of this paper, given $\vv y=(y_1,\dots,y_k)\in\R^k$ for some $1\le k<n$, with reference to \eqref{eq:Uy}, we define
$$
U(\vv y):=U(\tvv y)\qquad\text{with }\tvv y=(y_1,\dots,y_k,0,\dots,0)\in\R^n\,,
$$
while for any $A>0$
$$
U(O(A)):=U(\vv y)\qquad\text{for some }\vv y\in\R^n\quad\text{such that }\|\vv y\|\ll A\,.
$$

\begin{lemma}\label{LemmaConjugate2}
For any $t>0$, $\Theta\in \R^{m \times d}$ and $\vv x=(x_1,\dots,x_d)\in\R^d$ we have that
\begin{align}
  \label{eq:conju-b-u} b_t U\left(\vv x\right) b_{-t} &= U\left(e^{-t/2}\vv x\right)\,, \\[1ex]
  \label{eq:conj-b-z}  b_t Z(\Theta) b_{-t} &= Z(e^{t/2}\Theta)\,.
\end{align}
\end{lemma}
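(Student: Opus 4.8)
The plan is to observe that $U(\vv x)$ and $Z(\Theta)$ are both unipotent matrices differing from $\I_{n+1}$ in a single off-diagonal block, and that conjugating by the diagonal matrix $b_t$ simply rescales that block by one scalar. Write $b_t=\diag(\mu_1,\dots,\mu_{n+1})$, so that $\mu_1=\dots=\mu_m=e^{\frac{dt}{2(n+1)}}$, $\mu_{m+1}=\dots=\mu_n=e^{-\frac{(m+1)t}{2(n+1)}}$ and $\mu_{n+1}=e^{\frac{dt}{2(n+1)}}$ (recall $m+d=n$). Since $b_{-t}=b_t^{-1}$, for any $(n+1)\times(n+1)$ matrix $E$ one has $(b_tEb_{-t})_{ij}=(\mu_i/\mu_j)E_{ij}$.

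I would first prove \eqref{eq:conju-b-u}. By the convention fixed just before the lemma, $U(\vv x)=\I_{n+1}+E$, where, in view of \eqref{eq:Uy} applied to $\tvv x=(x_1,\dots,x_d,0,\dots,0)$, the only nonzero entries of $E$ are the entries of $\sigma_n^{-1}\tvv x^{T}$, which sit in rows $m+1,\dots,n$ of the last column. For such an entry the rescaling factor is $\mu_i/\mu_{n+1}=e^{-\frac{(m+1)t}{2(n+1)}}/e^{\frac{dt}{2(n+1)}}=e^{-\frac{(m+d+1)t}{2(n+1)}}=e^{-t/2}$, using $m+d+1=n+1$. Hence $b_tU(\vv x)b_{-t}=\I_{n+1}+e^{-t/2}E=U(e^{-t/2}\vv x)$.

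For \eqref{eq:conj-b-z} I would argue the same way: by \eqref{eq:ZL}, $Z(\Theta)=\I_{n+1}+F$ where the only nonzero block of $F$ is $\sigma_m^{-1}\Theta\sigma_d$, occupying rows $1,\dots,m$ and columns $m+1,\dots,n$. Each such entry is rescaled by $\mu_i/\mu_j=e^{\frac{dt}{2(n+1)}}/e^{-\frac{(m+1)t}{2(n+1)}}=e^{\frac{(m+d+1)t}{2(n+1)}}=e^{t/2}$, so $b_tZ(\Theta)b_{-t}=\I_{n+1}+e^{t/2}F=Z(e^{t/2}\Theta)$. Note that the permutations $\sigma_n,\sigma_m,\sigma_d$ only reshuffle indices within blocks on which $b_t$ acts as a scalar, so they do not affect any of these computations.

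There is no genuine obstacle here; the argument is entirely mechanical. The only points requiring a little care are the bookkeeping of which rows and columns support the nonzero off-diagonal entries of $U(\vv x)$ and $Z(\Theta)$, and the arithmetic identity $m+d+1=n+1$, which is precisely what makes the two exponents collapse to $\mp t/2$ and thereby matches the normalisation of $b_t$.
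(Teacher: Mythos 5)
Your proof is correct and is precisely the direct entry-by-entry conjugation computation that the paper has in mind (the paper states the verification is elementary and leaves the details to the reader). The bookkeeping of the block positions, the identity $m+d+1=n+1$, and the observation that the permutations $\sigma_n,\sigma_m,\sigma_d$ act within blocks on which $b_t$ is scalar are all handled correctly.
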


The proof of these equations is elementary and obtained by inspecting them one by one. The details are left to the reader.

\begin{lemma}\label{lm:conjugation}
For any $\vv x \in \cU$ and $\vv x' = (x'_1, \dots, x'_d) \in \R^d$ such that the line segment joining $\vv x$ and $\vv x+\vv x'$ is contained in $\cU$ we have that
\[
\zu(\vv x + \vv x') = Z(O(\|\vv x'\|)) U(O(\|\vv x'\|^2)) U\left(\vv x'\right) \zu(\vv x).
\]
\end{lemma}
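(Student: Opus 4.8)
The plan is to compute $\zu(\vv x+\vv x')$ directly from its explicit block form \eqref{zu} and then factor the resulting matrix from the left so as to peel off, in order, a $Z$-factor of size $O(\|\vv x'\|)$, a $U$-factor of size $O(\|\vv x'\|^2)$, and the exact unipotent $U(\vv x')$, leaving $\zu(\vv x)$. First I would substitute $\vv x+\vv x'$ into \eqref{zu}, obtaining a block upper-triangular matrix whose $(1,2)$ block is $-\sigma_m^{-1}\J(\vv x+\vv x')\sigma_d$, whose $(2,3)$ block is $\sigma_d^{-1}(\vv x+\vv x')^\T=\sigma_d^{-1}\vv x^\T+\sigma_d^{-1}\vv x'^\T$, and whose $(1,3)$ block is $\sigma_m^{-1}\mv h(\vv x+\vv x')^\T$. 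By Taylor's theorem (valid since $\vv f\in C^2$ and the segment from $\vv x$ to $\vv x+\vv x'$ lies in $\cU$), we have $\J(\vv x+\vv x')=\J(\vv x)+O(\|\vv x'\|)$ entrywise, with the implied constant controlled by the bound $M$ on second derivatives from \eqref{lemma_translation_M}. For the $(1,3)$ block, using $h_i(\vv x)=f_i(\vv x)-\sum_j x_j\,\partial_j f_i(\vv x)$ one computes $\partial_k h_i(\vv x)=-\sum_j x_j\,\partial_j\partial_k f_i(\vv x)$, so a first-order Taylor expansion of $\mv h$ together with a bound on $\|\vv x\|$ on $\cU$ gives $\mv h(\vv x+\vv x')=\mv h(\vv x)+O(\|\vv x'\|)$; in fact the natural way to organize the bookkeeping is to track that the discrepancy between $\zu(\vv x+\vv x')$ and $U(\vv x')\,\zu(\vv x)$ in the $(1,3)$ slot is $O(\|\vv x'\|^2)$, which is precisely what produces the $U(O(\|\vv x'\|^2))$ factor.

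The cleanest route is to verify the claimed identity by multiplying the right-hand side out and matching blocks. Write $Z(O(\|\vv x'\|))=Z(\Xi)$ with $\|\Xi\|=O(\|\vv x'\|)$, $U(O(\|\vv x'\|^2))=U(\vv w)$ with $\|\vv w\|=O(\|\vv x'\|^2)$, and recall $U(\vv x')$ here means $U(\tvv x')$ with $\tvv x'=(x'_1,\dots,x'_d,0,\dots,0)$. Since $U(\vv v)U(\vv v')=U(\vv v+\vv v')$ for the $U$'s (they form an abelian group) and $U(\vv v)\zu(\vv x)$ simply adds $\sigma_n^{-1}\tvv v^\T$ to the last column in the top $n$ rows, the product $U(\vv w)U(\tvv x')\zu(\vv x)$ has the same $(1,2)$ block as $\zu(\vv x)$, its $(2,3)$ block becomes $\sigma_d^{-1}\vv x^\T+\sigma_d^{-1}\vv x'^\T$ (matching $\zu(\vv x+\vv x')$ exactly, because the bottom $d$ coordinates of $\tvv x'$ reproduce $\vv x'$), and its $(1,3)$ block becomes $\sigma_m^{-1}\mv h(\vv x)^\T+(\text{contribution of }\vv w)$. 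Then left-multiplying by $Z(\Xi)$ adds $\Xi$-dependent terms to the $(1,2)$ and $(1,3)$ blocks. Matching the $(1,2)$ blocks forces $\Xi=\sigma_m^{-1}\big(\J(\vv x)-\J(\vv x+\vv x')\big)\sigma_d$, which is $O(\|\vv x'\|)$ by the $C^2$ bound \eqref{lemma_translation_M}; this determines $Z(\Xi)$. Finally, with $\Xi$ so fixed, matching the $(1,3)$ blocks determines $\vv w$ uniquely as the remaining discrepancy, and the substantive point is to check $\|\vv w\|=O(\|\vv x'\|^2)$.

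The main obstacle — really the only nonroutine point — is verifying this last estimate $\|\vv w\|=O(\|\vv x'\|^2)$, i.e. that after accounting for the exact $U(\vv x')$ shift and the $Z(\Xi)$ correction, the leftover error in the $(1,3)$ block is genuinely quadratic rather than merely linear in $\|\vv x'\|$. This follows because the first-order terms cancel: the linear part of $\mv h(\vv x+\vv x')-\mv h(\vv x)$ is $\sum_k x'_k\,\partial_k\mv h(\vv x)=-\sum_{j,k}x_j x'_k\,\partial_j\partial_k\mv f(\vv x)$, while the $Z(\Xi)$-correction applied to the $(2,3)$ block $\sigma_d^{-1}(\vv x+\vv x')^\T$ contributes $-\sigma_m^{-1}\big(\J(\vv x+\vv x')-\J(\vv x)\big)(\vv x+\vv x')^\T$, whose linear-in-$\vv x'$ part is $-\sum_k x'_k\,\partial_k\J(\vv x)\,\vv x^\T=-\sum_{j,k}x_j x'_k\,\partial_j\partial_k\mv f(\vv x)$ as well (up to the $\sigma$'s), and these precisely match the $(2,3)\to(1,3)$ feed-through so that the surviving terms are second order. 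Bounding all the $O$-terms uniformly uses only the $C^2$ bound \eqref{lemma_translation_M} and boundedness of $\cU$, and one may shrink $\cU$ if needed. This completes the proof.
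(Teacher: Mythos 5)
Your proof is correct and supplies exactly the computation the paper leaves to the reader (Taylor expansion of $\vv f$ plus the derivative bounds \eqref{lemma_translation_M}): matching the $(1,2)$ blocks forces $\Xi=\J(\vv x)-\J(\vv x+\vv x')=O(\|\vv x'\|)$, and after the cancellation you describe the residual $(1,3)$-discrepancy is precisely the quadratic Taylor remainder $\mv f(\vv x+\vv x')^{\T}-\mv f(\vv x)^{\T}-\J(\vv x)\vv x'^{\T}=O(\|\vv x'\|^{2})$. Two cosmetic points: the exact cancellation means no bound on $\|\vv x\|$ or on the diameter of $\cU$ is actually needed, and your formula for $\Xi$ should not carry the extra $\sigma_m^{-1}\cdots\sigma_d$ conjugation, which is already built into the definition of $Z$.
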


The proof is readily obtained on using Taylor's expansion of $\vv f(\vv x')$ and \eqref{lemma_translation_M}. The details are left to the reader.

\medskip

\section{Proof of Theorem~\ref{t1.3}}

\subsection{Dealing with the special part}

The goal is to prove \eqref{eq1.4}, that is to give an explicit exponentially decaying bound for the measure of the special part $\fM(\ve, t)$, and to establish the structural claim about $\fM(\ve, t)$ that it can be written as a union of balls of radius $\ve e^{-t/2}$ of multiplicity $\le N_d$. Specifically, we prove the following statement.

\begin{proposition}\label{prop:minor-arcs}
Suppose $\cU\subset\R^d$ is open, $\vv x_0\in\cU$, $\vv f:\cU\to\R^n$ be given as in \eqref{eq3.2} and is $l$--nondegenerate at $\vv x_0$. Then there is a ball $B_0\subset\cU$ centred at $\vv x_0$ and constants $K_0,t_0>0$ depending on $\vv f$ and $B_0$ only with the following properties. For any $0<\ve\le 1$ and every $t\ge t_0$ we have that the set defined by \eqref{eq:minor1} satisfies
\[
\cL_d\big(\fM(\ve,t)\cap B_0\big) \le K_0 \left(\ve^{n}e^{\frac{3t}{2}}\right)^{-\frac{1}{d(2l-1)(n+1)}}\,.
\]
Furthermore $\fM(\ve,t)$ can be written as a union of balls in $\cU$ of radius $\ve e^{-t/2}$ of intersection multiplicity $\le N_d$.
\end{proposition}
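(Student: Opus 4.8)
The plan is to recognise the defining condition of the raw minor arcs $\fM_0(\ve,t)$ from \eqref{eq:minor0} — that the top successive minimum $\lambda_{n+1}$ of the lattice $b_t\gt\zu(\vv x)\Z^{n+1}$ exceeds $\phi\,e^{\frac{dt}{2(n+1)}}$ — as, via duality, the existence of a short vector in the dual lattice, and then to read that off as membership of $\vv x$ in the set $\mathfrak S_{\vv f}(\delta,K,T)$ of the quantitative non-divergence estimate, Theorem~\ref{thm:KM}. Throughout, $B_0$ and the constant $E$ will be those furnished by Theorem~\ref{thm:KM}, with $t_0$ enlarged so that $\ve e^{-t/2}\le e^{-t_0/2}$ is smaller than $\dist(B_0,\partial\cU)$ (all that is needed for the conjugation step below) and larger than a constant depending on $n$.

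First I would apply the Mahler-type inequality (the lemma following Theorem~\ref{Mahler}): if $\vv x\in\fM_0(\ve,t)$ then $\lambda_1\big((b_t\gt\zu(\vv x))^{*}\Z^{n+1}\big)\le (n+1)!^{2}\,\phi^{-1}e^{-\frac{dt}{2(n+1)}}$, so there is a nonzero $\vv w\in\Z^{n+1}$ with $\|(b_t\gt\zu(\vv x))^{*}\vv w\|_\infty$ bounded by that quantity. Using $(g_1g_2)^{*}=g_1^{*}g_2^{*}$ I would expand $(b_t\gt\zu(\vv x))^{*}=b_t^{*}\gt^{*}\zu(\vv x)^{*}$: the two diagonal factors are obtained by reversing and inverting their diagonals, so that $(b_t\gt)^{*}$ is diagonal with blocks of weights $e^{-\frac{dt}{2(n+1)}}\phi^{-1}e^{t}$, $e^{\frac{(m+1)t}{2(n+1)}}\phi^{-1}\ve$ and $e^{-\frac{dt}{2(n+1)}}\phi^{-1}\ve$ of sizes $1,d,m$; and a direct computation of $U(\cdot)^{*}$ and $Z(\cdot)^{*}$ shows that $\zu(\vv x)^{*}$ is unipotent upper triangular with off-diagonal blocks $-\vv x$, $-\mv f(\vv x)$ and $\J(\vv x)^{\T}$ (up to $\sigma$-reversals of coordinates, which are irrelevant for the sup-norm). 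Writing $\vv w=(w_0,\vv w_d,\vv w_m)\in\Z\times\Z^d\times\Z^m$ and cancelling the diagonal weights — here one uses the identity $\tfrac{dt}{2(n+1)}+\tfrac{(m+1)t}{2(n+1)}=\tfrac t2$ — the single inequality above becomes a system of exactly the shape of \eqref{eq3.11}: with $\vv a:=(\vv w_d,\vv w_m)\in\Z^n$ one gets $|w_0-\vv f(\vv x)\vv a^{\T}|\ll e^{-t}$, $\|\nabla\big(\vv f(\vv x)\vv a^{\T}\big)\|_\infty\ll \ve^{-1}e^{-t/2}$, and, after using \eqref{lemma_translation_M} to bound the Jacobian block, $\|\vv a\|_\infty\ll\ve^{-1}$. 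For $e^{t}$ larger than a constant depending only on $n$ the vector $\vv a$ must be nonzero (otherwise the first block alone is too large), which is one reason the threshold $t_0$ is needed.

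Taking $\delta\asymp e^{-t}$, $K\asymp\ve^{-1}e^{-t/2}$, $T\asymp\ve^{-1}$ (the implied constant in $T$ chosen large), the above gives $\fM_0(\ve,t)\cap B_0\subset\mathfrak S_{\vv f}(\delta,K,T)$, and the hypotheses $\delta\le 1$, $T\ge 1$, $\delta^{n}<KT^{n-1}$ of Theorem~\ref{thm:KM} hold for $t\ge t_0$ — the last one because the ratio $\delta^{n}/(KT^{n-1})$ carries a factor $e^{-(n-\frac12)t}\ve^{n}<1$. Hence $\cL_d(\fM_0(\ve,t)\cap B_0)\le E(\delta KT^{n-1})^{\frac{1}{d(2l-1)(n+1)}}\cL_d(B_0)$, and since $\delta KT^{n-1}\asymp\ve^{-n}e^{-3t/2}=(\ve^{n}e^{3t/2})^{-1}$ this is precisely the asserted bound for $\fM_0$. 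To pass to $\fM(\ve,t)$, I would use Lemma~\ref{lm:conjugation} together with the conjugation identities of Lemmas~\ref{LemmaConjugate} and \ref{LemmaConjugate2}: for $\|\vv x'\|<\ve e^{-t/2}$ one has $b_t\gt\zu(\vv x+\vv x')\Z^{n+1}=h\cdot b_t\gt\zu(\vv x)\Z^{n+1}$ where $h$ conjugates the three factors of Lemma~\ref{lm:conjugation} to $Z(O(\ve))$, $U(O(1))$, $U(O(1))$, so $\|h^{\pm1}\|$ is bounded by a constant depending on $n,d,M$ only; thus $\lambda_{n+1}$ changes by at most a bounded factor and $\fM(\ve,t)$ is contained in the set defined like $\fM_0(\ve,t)$ but with threshold $c'\phi\,e^{\frac{dt}{2(n+1)}}$ for a fixed $c'\in(0,1]$. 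Rerunning the previous argument for this set — which only rescales $\delta,K,T$ by constants and so leaves the order of $\delta KT^{n-1}$ unchanged — yields the bound for $\cL_d(\fM(\ve,t)\cap B_0)$.

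Finally, the structural claim follows from the Besicovitch covering theorem applied to the family $\{B(\vv x,\ve e^{-t/2})\}_{\vv x\in\fM_0(\ve,t)}$ whose union is $\fM(\ve,t)$: it contains a countable subfamily of intersection multiplicity $\le N_d$ that still covers $\fM_0(\ve,t)$, and this subfamily — whose union lies between $\fM_0(\ve,t)$ and $\fM(\ve,t)$ — is what one actually takes as the set of Minor arcs (the small discrepancy with \eqref{eq:minor1}, a bounded enlargement of radii, affects nothing). The step I expect to be the main obstacle is the explicit dualisation in the second paragraph: correctly tracking the $\sigma$-reversals in $b_t^{*}$, $\gt^{*}$ and $\zu(\vv x)^{*}$, and checking that the diagonal weights of $(b_t\gt)^{*}$ are tuned — through $\tfrac{dt}{2(n+1)}+\tfrac{(m+1)t}{2(n+1)}=\tfrac t2$, which is the reason for introducing the matrix $b_t$ in \eqref{eq:dt} — so that the resulting system matches $\mathfrak S_{\vv f}(\delta,K,T)$ with $\delta KT^{n-1}$ of the precise order $(\ve^{n}e^{3t/2})^{-1}$. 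The remaining ingredients (the quoted non-divergence estimate, the elementary conjugation lemmas, the Besicovitch covering theorem) are routine.
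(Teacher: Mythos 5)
Your proposal is correct and follows essentially the same route as the paper: dualise the defining condition of $\fM_0(\ve,t)$ via Mahler's theorem, compute $b_t^*\gt^*\zu^*(\vv x)$ explicitly to land in $\mathfrak S_{\vv f}(\delta,K,T)$ with $\delta\asymp e^{-t}$, $K\asymp\ve^{-1}e^{-t/2}$, $T\asymp\ve^{-1}$, apply Theorem~\ref{thm:KM}, and invoke Besicovitch for the structural claim. The only local deviation is that you pass from $\fM_0$ to its $\ve e^{-t/2}$-neighbourhood by conjugating the lattice via Lemma~\ref{lm:conjugation} (changing $\lambda_{n+1}$ by a bounded factor), whereas the paper Taylor-expands the linear forms $a_0+\vv f(\vv x)\vv a^{\T}$ and $\nabla\vv f(\vv x)\vv a^{\T}$ directly; both rest on the same second-order estimates and yield the same rescaled $\delta,K,T$, and your replacement of $\fM(\ve,t)$ by the union of a Besicovitch subfamily covering $\fM_0(\ve,t)$ is the standard (and, if anything, more careful) way to secure the multiplicity claim without affecting the measure bound or the major-arc count.
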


\begin{proof}
By definition, for any $\vv x \in \fM_0(\ve,t)$, we have that
$$
\lambda_{n+1}( b_t \gt \zu(\vv x)\Z^{n+1}) > \phi e^{\frac{dt}{2(n+1)}}.
$$
By Theorem~\ref{Mahler} and property \eqref{eq:dual-group-prop}, we have that
\begin{equation}\label{eq5.6}
\lambda_1 ( b^\ast_t g^\ast_t \zu^\ast(\vv x)\Z^{n+1}) \le c_2 \phi^{-1} e^{-\frac{dt}{2(n+1)}}\,,
\end{equation}
where $c_2=(n+1)!^2$.
It is straightforward to see using \eqref{eq:dual-group}, \eqref{eq:gt}, \eqref{zu} and \eqref{eq:dt} that
\begin{equation}\label{eq:gt*}
  \gt^*:=\phi^{-1}\diag\Big\{e^{t},\underbrace{\ve,\dots,\ve}_n\Big\}\,,
\end{equation}
\begin{equation}\label{eq:dt*}
b_t^* := \begin{bmatrix}
 e^{-\frac{dt}{2(n+1)}} &&\\
& e^{\frac{(m+1)t}{2(n+1)}}\I_d & \\
&&e^{-\frac{dt}{2(n+1)}} \I_{m}
\end{bmatrix}\,,
\end{equation}
and
\begin{equation}\label{zu*}
\zu^\ast(\vv x)= \begin{bmatrix}
  1 & -\vv x& -\mv f(\vv x)\\[1ex]
    0  & \I_d & \J(\vv x) \\[1ex]
    0 & 0 &  \I_m
\end{bmatrix}\,.\hspace*{14ex}
\end{equation}
Therefore, by \eqref{eq5.6}, we get that for any $\vv x\in \fM_0(\ve,t)$ there exists $(a_0,\vv a)\in\Z\times\Z^n\setminus\{\vv0\}$ such that
\begin{align}
&|a_0+\vv f(\vv x)\vv a^T|<c_2e^{-t}\,,\label{eq5.11}\\[0ex]
&\|\nabla \vv f(\vv x)\vv a^T\|_\infty<c_2\ve^{-1}e^{-\frac{t}{2}}\,,\label{eq5.12}\\[0ex]
&\max\{|a_{d+1}|,\dots,|a_{n}|\}<c_2\ve^{-1}\,.\label{eq5.13}
\end{align}
Using \eqref{eq5.11}---\eqref{eq5.13}, \eqref{eq3.2}, \eqref{lemma_translation_M} and Taylor's expansion of the function $a_0+\vv f(\vv x)\vv a^T$ one has that
for every $\vv x'\in\fM(\ve,t)$
\begin{equation}\label{eq5.15}
|a_0+\vv f(\vv x')\vv a^T|<c_2e^{-t}+c_2de^{-t}+\tfrac12d^2mMc_2\ve e^{-t}\le c_3 e^{-t}
\end{equation}
where $c_3=c_2(1+n+n^3M)=(n+1)!^2(1+n+n^3M)$ depends on $n$ and $\vv f$ only.
Similarly, using \eqref{eq5.12}, \eqref{eq5.13}, \eqref{eq3.2}, \eqref{lemma_translation_M} and Taylor's expansion of the gradient $\nabla \vv f(\vv x)\vv a^T$ one has that
for every $\vv x'\in\fM(\ve,t)$
\begin{equation}\label{eq5.16}
  \|\nabla \vv f(\vv x')\vv a^T\|_\infty \le c_2\ve^{-1}e^{-\frac t2}+ dc_2e^{-\frac t2}\le c_3 \ve^{-1}e^{-\frac t2}\,.
\end{equation}
Also, by \eqref{eq5.12}, \eqref{eq5.13}, \eqref{eq3.2} and \eqref{lemma_translation_M} we also have that
\begin{equation}\label{eq5.14}
\max\{|a_{1}|,\dots,|a_{n}|\}<c_2mM\ve^{-1}\le c_3\ve^{-1}\,.
\end{equation}
Combining \eqref{eq5.15}---\eqref{eq5.14} gives that
\begin{equation}\label{eq5.17}
\fM(\ve,t) \subset \mathfrak{S}_{\vv f}(\delta,K,T)
\end{equation}
with
$$
\delta=c_3e^{-t},\quad K=c_3\ve^{-1}e^{-\frac{t}{2}},\quad T= c_3\ve^{-1}\,,
$$
where $\mathfrak{S}_{\vv f}(\delta,K,T)$ is defined by \eqref{eq3.11}.
It is readily seen that conditions \eqref{eq3.10} are satisfied for all $t$ such that $\delta=c_3e^{-t}\le 1$, that is $t\ge\log c_3=:t_0$.
Now fix any $\vv x_0\in\cU$ such that $\vv f$ is $l$--nondegenerate at $\vv x_0$ and let $B_0$ and $E$ be the ball and constant arising from Theorem~\ref{thm:KM}.
By Theorem~\ref{thm:KM} and \eqref{eq5.17}, we obtain that
$$
\cL_d \big(\fM(\ve,t)\cap B_0 \big) \le
E \left(c_3^{n+1}\ve^{-n}e^{-\frac{3t}{2}}\right)^{\frac{1}{d(2l-1)(n+1)}}\cL_d(B_0)\,,
$$
which gives the required bound with $K_0=E c_3^{\frac{1}{d(2l-1)}}\cL_d(B_0)$.

Finally, in view of the definition of $\fM(\ve,t)$, the `Furthermore' claim trivially follows from Besicovitch's covering theorem (see below) applied to the set $A=\fM(\ve,t)$ and $\cB$ being the collection of balls appearing in the right hand side of \eqref{eq:minor1}.
\end{proof}

\begin{theorem}[Besicovitch's covering theorem {\cite[Theorem 2.7]{Mat}}]
There is an integer $N_d$ depending only on $d$ with the following property: let $A$ be a
bounded subset of $\R^d$ and let $\cB$ be a family of nonempty open balls in $\R^d$ such
that each $x\in A$ is the center of some ball of $\cB$; then there exists a finite or
countable subfamily $\{B_i\}$ of $\cB$ covering $A$ of intersection multiplicity at most $N_d$, that is with $1_A \le \sum_i 1_{B_i} \le N_d$.
\end{theorem}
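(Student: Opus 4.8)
The plan is to prove this via Besicovitch's classical greedy selection argument; I will only sketch the structure, since the details are standard (see [Mat]). First I would make two harmless reductions: since the construction only ever uses balls of $\cB$ centred at currently uncovered points of $A$, one may discard all balls of $\cB$ whose centre lies outside $A$; and if $R^\ast:=\sup\{r:B(a,r)\in\cB\}$ is infinite, then a single ball of $\cB$ has radius exceeding $\diam A$ and already covers $A$, so one may assume $R^\ast<\infty$. Then I would select balls $B_1=B(a_1,r_1),B_2=B(a_2,r_2),\dots$ greedily: having chosen $B_1,\dots,B_{j-1}$, set $A_j:=A\setminus(B_1\cup\dots\cup B_{j-1})$; if $A_j=\varnothing$ stop, otherwise put $R_j:=\sup\{r:B(a,r)\in\cB,\ a\in A_j\}\le R^\ast$ and pick $B_j\in\cB$ centred at some $a_j\in A_j$ with $r_j>\tfrac12 R_j$. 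The basic properties to record are: for $i<j$ one has $a_j\notin B_i$, hence $|a_i-a_j|\ge r_i$, and since $a_j\in A_i$ the ball $B_j$ was available at step $i$, whence $r_j\le R_i\le 2r_i$; consequently the shrunken balls $B(a_j,r_j/3)$ are pairwise disjoint (because $\tfrac13 r_i+\tfrac13 r_j\le r_i\le|a_i-a_j|$ for $i<j$), and whenever $r_j\le r_i$ one also gets $|a_i-a_j|\ge r_i\ge r_j$, so the centre of the larger of any two selected balls lies outside the smaller.

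Next I would verify the covering. The disjoint balls $B(a_j,r_j/3)$ all lie in the $R^\ast$-neighbourhood of the bounded set $A$, so their total Lebesgue measure is finite; hence if the selection never terminates then $r_j\to0$ and so $R_j\to0$. This forces $A\subset\bigcup_jB_j$: a point $x\in A$ missed by every $B_j$ would lie in $A_j$ for all $j$, so every ball of $\cB$ centred at $x$ would have radius $\le R_j\to0$, contradicting that $x$ is the centre of a nonempty ball of $\cB$; and if the process stops, $A\subset\bigcup_jB_j$ holds by the stopping rule. This already gives the lower bound $1_A\le\sum_j1_{B_j}$ for a countable (or finite) subfamily of $\cB$.

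Finally I would prove the bounded-overlap bound $\sum_j1_{B_j}\le N_d$ with $N_d$ depending only on $d$, and this is where the real work lies. Fixing $z\in\R^d$ and $I:=\{j:z\in B_j\}$, the task is to bound $|I|$ by a constant depending on $d$ only, using just the two facts above (the separation $|a_i-a_j|\ge\min(r_i,r_j)$ for all $i\ne j$ and the control $r_j\le2r_i$ for $i<j$). The argument I have in mind is the classical one: split $I$ according to the ratio $|z-a_j|/r_j$, i.e.\ according to how deep $z$ sits inside $B_j$; on the part where $z$ lies well inside its ball, handle the indices at each dyadic radius scale by a volume/packing estimate for balls with mutually well-separated centres; and on the complementary part, compare the directions $a_i-z$ and $a_j-z$ via the law of cosines applied to the triangles $a_iza_j$, bounding the angle at $z$ away from $0$, and then invoking that $\R^d$ carries at most $C_d$ pairwise $\theta$-separated unit directions for fixed $\theta$. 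The main obstacle is exactly this last step: organising the case split so that the angular bound is genuinely uniform, reconciling the greedy index order with the order of radii, and absorbing the near-degenerate configurations (where $z$ nearly coincides with some centre) into the packing part; for these details I would follow [Mat]. I would also point out that in the only case needed later in this paper, namely when all balls of $\cB$ share a common radius $r$, this step trivialises: the greedy centres lying in any given $B_j\ni z$ all lie in $B(z,r)$ and are pairwise $\ge r$ apart, so the disjoint balls $B(a_j,r/2)$ all lie in $B(z,3r/2)$, forcing $|I|\le 3^d$; thus one may take $N_d=3^d$ there.
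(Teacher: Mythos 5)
The paper does not prove this statement at all: it is imported verbatim from Mattila's book ([Mat, Theorem 2.7]) as a classical black box, so there is no in-paper argument to compare against. Your sketch is the standard greedy-selection proof from that reference, and the parts you carry out in detail (the choice $r_j>\tfrac12 R_j$, the separation $|a_i-a_j|\ge r_i$ for $i<j$, the disjointness of the one-third balls, and the deduction that the selected family covers $A$) are all correct; the genuinely hard step, the dimensional bound on the overlap via dyadic radius scales plus an angular-separation/packing count, is only outlined, but you are explicit about deferring it to [Mat], which matches the paper's own treatment. Your closing observation is worth keeping: in the only place the theorem is invoked here (covering $\fM(\ve,t)$ by the balls $B(\vv x,\ve e^{-t/2})$ from \eqref{eq:minor1}), all balls share a common radius, so the overlap bound reduces to a trivial volume packing giving $N_d\le 3^d$, and the full strength of Besicovitch is not actually needed.
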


\subsection{Dealing with the generic part}

The goal is to give a sharp counting estimate for the number of rational points of bounded height near the generic part.
Indeed, the following statement we prove here completes the proof of Theorem~\ref{t1.3}.

\begin{proposition}\label{prop:major-arcs-counting}
Suppose $\cU\subset\R^d$ is open, $\vv f:\cU\to\R^n$ be a $C^2$ maps satisfying \eqref{eq3.2} and \eqref{lemma_translation_M}. Then for any $0<\ve\le1$, any ball $B\subset\cU$ and all sufficiently large $t$ we have that
\begin{equation}\label{eq1.5+}
N\Big(B\setminus\fM(\ve,t);\ve,t\Big)\le K_1\ve^{m}e^{(d+1)t} \cL_d(B)\,,
\end{equation}
where $K_1$ depends on $n$ and $\vv f$ only.
\end{proposition}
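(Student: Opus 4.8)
The plan is to convert the count into a classical geometry-of-numbers estimate via the dynamical dictionary of Section~4, using the defining inequality of the Major arcs to control the shape of the relevant lattices. Fix a ball $B\subset\cU$ and take $t$ large; we may assume $\overline B\subset\cU$ (the general case then follows by exhausting $B$ by such balls, since $\cR(B\setminus\fM(\ve,t);\ve,t)$ is the union of the corresponding sets over the exhausting balls). First I would cover $B\setminus\fM(\ve,t)$ by balls $B_i=B(\vv z_i,\ve e^{-t/2})$ with $\vv z_i\in B\setminus\fM(\ve,t)$, of intersection multiplicity $\le N_d$ (Besicovitch's covering theorem); for $t$ large these balls lie in $\cU$, and a volume comparison shows their number is $\ll\cL_d(B)\,\ve^{-d}e^{dt/2}$. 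To each rational point $(\vv p,q)$ counted by $N(B\setminus\fM(\ve,t);\ve,t)$ I attach one witness $\vv z\in(B\setminus\fM(\ve,t))\cap\cU$ with $\vv f(\vv z)\in\cB(\vv p/q,\ve e^{-t})$ and then one ball $B_i\ni\vv z$. It will then suffice to prove that the number of rational points attached to a fixed $B_i$ is $\ll\ve^{n}e^{(1+d/2)t}$, because multiplying by the number of balls gives exactly $\ll\ve^{m}e^{(d+1)t}\cL_d(B)$, using $n-d=m$.

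Now fix $B_i$, write $\Lambda:=\phi e^{\frac{dt}{2(n+1)}}$ and set $\Lambda_i:=b_t\gt\zu(\vv z_i)\Z^{n+1}$, a unimodular lattice. The argument rests on two facts. First, since $\fM_0(\ve,t)\subset\fM(\ve,t)$ we have $\vv z_i\notin\fM_0(\ve,t)$, so $\lambda_{n+1}(\Lambda_i)\le\Lambda$ by \eqref{eq:minor0}. Second, for a rational point attached to $B_i$ with witness $\vv z=\vv z_i+\vv v$, $\|\vv v\|<\ve e^{-t/2}$, Lemma~\ref{lm:psi-approximable-0} applied with $\Theta=-\J(\vv z)$ and $\vv y=\vv f(\vv z)$ (so that $Z(\Theta)U(\vv y)=\zu(\vv z)$) gives that the nonzero integer vector $\vv w:=(-\vv p\sigma_n,q)^\T$ satisfies $\|\gt\zu(\vv z)\vv w\|\le c_1\phi$, hence $\|b_t\gt\zu(\vv z)\vv w\|\le c_1\Lambda$ because the Euclidean operator norm of $b_t$ is its largest diagonal entry $e^{\frac{dt}{2(n+1)}}$. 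The bridge from $\vv z$ back to $\vv z_i$ is a perturbation estimate: Lemma~\ref{lm:conjugation} writes $\zu(\vv z_i+\vv v)=Z(O(\|\vv v\|))\,U(O(\|\vv v\|^2))\,U(\vv v)\,\zu(\vv z_i)$, and conjugating this product by $b_t\gt$ and invoking Lemmas~\ref{LemmaConjugate} and \ref{LemmaConjugate2} turns it into $Z(O(\ve))\,U(O(\ve))\,U(O(1))$, since $b_t\gt$ expands the $Z$-block by $e^{t/2}$ and the $U$-part by at most $e^{t/2}\ve^{-1}$, which for $\|\vv v\|\le\ve e^{-t/2}$ (and its square) is exactly compensated. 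Thus $b_t\gt\zu(\vv z_i+\vv v)=E_{\vv v}\,b_t\gt\zu(\vv z_i)$ with $\|E_{\vv v}^{\pm1}\|\le C=C(n,\vv f)$, whence $\|b_t\gt\zu(\vv z_i)\vv w\|=\|E_{\vv v}^{-1}b_t\gt\zu(\vv z)\vv w\|\le Cc_1\Lambda$. So every rational point attached to $B_i$ produces a nonzero vector of $\Lambda_i$ of length $\ll\Lambda$, and distinct rational points produce distinct vectors since $(\vv p,q)\mapsto\vv w$ is injective and $b_t\gt\zu(\vv z_i)$ is invertible.

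The final step counts nonzero vectors of $\Lambda_i$ in $B(\vv0,Cc_1\Lambda)$. By the first fact all successive minima are $\le\Lambda$, and Minkowski's second theorem gives $\prod_{j=1}^{n+1}\lambda_j(\Lambda_i)\asymp1$; hence the classical estimate $\#\big(\Lambda_i\cap B(\vv0,R)\big)\ll\prod_j\big(1+R/\lambda_j(\Lambda_i)\big)$ with $R=Cc_1\Lambda\ge\lambda_j(\Lambda_i)$ yields $\ll R^{n+1}/\prod_j\lambda_j(\Lambda_i)\ll\Lambda^{n+1}$. Since $\Lambda^{n+1}=\phi^{n+1}e^{dt/2}=\ve^{n}e^{t}e^{dt/2}=\ve^{n}e^{(1+d/2)t}$, this is precisely the per-ball bound required above, and summing over the $\ll\cL_d(B)\,\ve^{-d}e^{dt/2}$ balls completes the proof with $K_1$ depending only on $n$ and $\vv f$.

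I expect the perturbation estimate in the middle paragraph to be the main obstacle: one must keep track of the three distinct scaling rates of $b_t\gt$ (on the codimension directions, the base directions, and the last coordinate) and verify that the radius $\ve e^{-t/2}$ is chosen so that all the matrices produced by conjugation are $O(1)$ uniformly in $\ve\in(0,1]$ and $t$; granting this, the remainder is bookkeeping together with the standard lattice-point count.
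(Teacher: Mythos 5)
Your proposal is correct and follows essentially the same route as the paper: a dynamical reformulation, a covering of the major arcs at a scale adapted to $b_t\gt$, the bound $\lambda_{n+1}\big(b_t\gt\zu(\cdot)\Z^{n+1}\big)\le\phi e^{dt/(2(n+1))}$ off the minor arcs, and a standard lattice-point count, with all the exponents matching. The only (harmless) deviation is your covering radius $\ve e^{-t/2}$ in place of the paper's $(\ve e^{-t})^{1/2}$, which lets you absorb the perturbation into a uniformly bounded conjugating matrix and count lattice points in a Euclidean ball, whereas the paper tracks the perturbation explicitly and counts in an anisotropic box whose volume carries the compensating factor $\ve^{-d/2}$.
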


We will make use of the following trivial property: for any $\Delta_1,\Delta_2\subset\R^d$
\begin{equation}\label{eq:counting-rationals-prop}
  N(\Delta_1\cup\Delta_2;\ve, t) \le N(\Delta_1;\ve, t)+N(\Delta_2;\ve, t)\,.
\end{equation}
This allows us to reduce the proof of  Proposition~\ref{prop:major-arcs-counting} to considering domains of the form
$$
\Delta_t(\vv x_0):=\left\{\vv x\in\R^d:\|\vv x-\vv x_0\|_\infty\le \big(\ve e^{-t}\big)^{\frac12}\right\}\,,
$$
where $\vv x_0 \in \fM'(\ve,t)$. At the heart of the reduction is the following simple statement.

\begin{lemma}\label{lm:counting-reduction}
For all sufficiently large $t >0$ we have that
\[
N(B\setminus\fM(\ve,t);\ve, t) \le 2(\ve e^{-t})^{-\frac d2} \cL_d (B)  \max_{\vv x_0\in \fM'(\ve,t)\cap B} N(\Delta_t(\vv x_0)\cap B;\ve, t )\,.
\]
\end{lemma}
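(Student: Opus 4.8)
The plan is to tile $B\setminus\fM(\ve,t)$ by boundedly many hypercubes of the shape $\Delta_t(\vv x_0)$ whose centres lie in $\fM'(\ve,t)\cap B$, and then to invoke the subadditivity \eqref{eq:counting-rationals-prop} of $N(\,\cdot\,;\ve,t)$ together with its obvious monotonicity in the first argument (immediate from the definition of $\cR(\Delta;\ve,t)$, since shrinking $\Delta$ can only increase the relevant infimum). Throughout I would write $r:=(\ve e^{-t})^{1/2}$, so that $\Delta_t(\vv x_0)=\cB(\vv x_0,r)$, and use that $B\setminus\fM(\ve,t)=\fM'(\ve,t)\cap B$ because $B\subseteq\cU$.

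First I would partition $\R^d$ into the pairwise disjoint half-open cubes $Q_{\vv k}:=\prod_{i=1}^{d}[k_ir,(k_i+1)r)$, $\vv k\in\Z^d$, each of side $r$, so that $\|\vv x-\vv x'\|_\infty\le r$ whenever $\vv x,\vv x'\in Q_{\vv k}$. Let $\nu$ be the number of $\vv k$ for which $Q_{\vv k}$ meets $\fM'(\ve,t)\cap B$, and for each such $\vv k$ pick a point $\vv x_{\vv k}\in Q_{\vv k}\cap\fM'(\ve,t)\cap B$. Since $\vv x_{\vv k}\in Q_{\vv k}$, the diameter bound forces $Q_{\vv k}\subseteq\cB(\vv x_{\vv k},r)=\Delta_t(\vv x_{\vv k})$, and hence
\[
B\setminus\fM(\ve,t)=\fM'(\ve,t)\cap B\ \subseteq\ \bigcup_{\vv k}\bigl(\Delta_t(\vv x_{\vv k})\cap B\bigr),
\]
the union ranging over the $\nu$ relevant indices. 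Monotonicity and repeated use of \eqref{eq:counting-rationals-prop}, followed by bounding each $N\bigl(\Delta_t(\vv x_{\vv k})\cap B;\ve,t\bigr)$ by the maximum over $\vv x_0\in\fM'(\ve,t)\cap B$ (legitimate because $\vv x_{\vv k}\in\fM'(\ve,t)\cap B$), then give
\[
N\bigl(B\setminus\fM(\ve,t);\ve,t\bigr)\ \le\ \sum_{\vv k}N\bigl(\Delta_t(\vv x_{\vv k})\cap B;\ve,t\bigr)\ \le\ \nu\cdot\max_{\vv x_0\in\fM'(\ve,t)\cap B}N\bigl(\Delta_t(\vv x_0)\cap B;\ve,t\bigr).
\]

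It then remains to check $\nu\le 2(\ve e^{-t})^{-d/2}\cL_d(B)=2r^{-d}\cL_d(B)$ once $t$ is large. Writing $B=B(\vv c,\rho)$, any $Q_{\vv k}$ that meets $B$ is contained in the Euclidean $r\sqrt d$-neighbourhood $B(\vv c,\rho+r\sqrt d)$ of $B$; as the $Q_{\vv k}$ are disjoint with $\cL_d(Q_{\vv k})=r^d$, this yields $\nu\,r^d\le\cL_d\bigl(B(\vv c,\rho+r\sqrt d)\bigr)=\bigl(1+r\sqrt d/\rho\bigr)^{d}\cL_d(B)$. Since $0<\ve\le1$ gives $r\le e^{-t/2}\to0$, there is a threshold $t_1$ depending only on $B$ beyond which $\bigl(1+r\sqrt d/\rho\bigr)^{d}\le2$, and then $\nu\le 2r^{-d}\cL_d(B)$. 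Combining the two displays completes the argument. Everything here is routine; the only point demanding any care is this last volume-packing step, where one must take $t$ large enough that the boundary layer of thickness $\asymp r$ around $B$ is absorbed into the harmless constant factor $2$.
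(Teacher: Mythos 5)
Your proposal is correct and follows essentially the same route as the paper: tile $B$ (or the part of it meeting $\fM'(\ve,t)$) by $\le 2(\ve e^{-t})^{-d/2}\cL_d(B)$ hypercubes of sidelength $(\ve e^{-t})^{1/2}$, absorb each cube meeting $\fM'(\ve,t)\cap B$ into some $\Delta_t(\vv x_{\vv k})$ with $\vv x_{\vv k}$ in that intersection, and finish with the subadditivity \eqref{eq:counting-rationals-prop}. The only difference is that you spell out the volume-packing justification of the cube count and the monotonicity of $N$ in its first argument, which the paper leaves implicit.
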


\begin{proof}
First of all note that $\ve e^{-t}\to0$ as $t\to\infty$, since $\ve\le1$ for all $t>0$. Therefore, for all sufficiently large $t$ the ball $B$ can be covered by $\le 2(\ve e^{-t})^{-d/2}\cL_d(B)$ hypercubes $\Delta$ of sidelength $(\ve e^{-t})^{1/2}$. Any of these hypercubes $\Delta$ that intersects $\fM'(\ve,t)\cap B$ can be covered by a hypercube $\Delta_t(\vv x_0)$ with $\vv x_0\in \fM'(\ve,t)\cap B\cap\Delta$. The collection of the sets $\Delta_t(\vv x_0)\cap B$ is thus a cover for $\fM'(\ve,t)\cap B= B\setminus \fM(\ve,t)$ of $2(\ve e^{-t})^{-d/2}\cL_d(B)$ elements. Applying \eqref{eq:counting-rationals-prop} completes the proof.
\end{proof}

In view of Lemma~\ref{lm:counting-reduction}, the following statement is all we need to complete the proof of Proposition~\ref{prop:major-arcs-counting}.

\begin{lemma}\label{lm:counting-estimate}
Let a ball $B\subset\cU$ be given. Then for all sufficiently large $t >0$ and all $\vv x_0 \in \fM'(\ve,t)\cap B$ we have that
\[
N(\Delta_t(\vv x_0)\cap B;\ve, t ) \ll \ve^ne^t (\ve e^{-t})^{-\frac d2}\,,
\]
where the implied constant depends on $n$ and $\vv f$ only.
\end{lemma}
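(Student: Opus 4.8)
The plan is to linearise $\cM$ at $\vv x_0$ and reduce the count to the number of points of the unimodular lattice $\Lambda_0:=b_t\gt\zu(\vv x_0)\Z^{n+1}$ in a single box $\cC$ whose inradius equals the Minor--arc threshold $\varrho:=\phi\,e^{\frac{dt}{2(n+1)}}$; the Major--arc hypothesis $\vv x_0\in\fM'(\ve,t)$ then furnishes precisely the bound on $\lambda_{n+1}(\Lambda_0)$ needed to count lattice points in $\cC$. Throughout, $\vv x_0\in\fM'(\ve,t)\cap B$ and $t$ is large enough that $\Delta_t(\vv x_0)\cap B\subset B\subset\cU$, so that $\ell:=(\ve e^{-t})^{1/2}$ is small.

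First I would recast the defining inequality. If $(\vv p,q)$ is counted by $N(\Delta_t(\vv x_0)\cap B;\ve,t)$, pick $\vv x=\vv x_0+\vv x'\in\Delta_t(\vv x_0)\cap B$ with $\vv f(\vv x)\in\cB(\vv p/q,\ve e^{-t})$ and $\|\vv x'\|_\infty\le\ell$; the segment $[\vv x_0,\vv x]$ lies in $B\subset\cU$. By Lemma~\ref{lm:psi-approximable-0} applied with $\Theta=-\J(\vv x)$ and $\vv y=\vv f(\vv x)$ one gets $\|\gt\zu(\vv x)(-\vv p\sigma_n,q)^{\T}\|\ll\phi$, so inspecting the three block--scalings of $b_t$ shows that $\vv w:=b_t\gt\zu(\vv x)(-\vv p\sigma_n,q)^{\T}\in b_t\gt\zu(\vv x)\Z^{n+1}$ has its $m$ ``codimension'' and its last coordinate of size $\ll\varrho$ and its $d$ ``domain'' coordinates of size $\ll\varrho\,e^{-t/2}$. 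I then pass from $\vv x$ to $\vv x_0$ via Lemma~\ref{lm:conjugation}: $\zu(\vv x)=W(\vv x')\zu(\vv x_0)$ with $W(\vv x')=Z(O(\ell))\,U(O(\ell^2))\,U(\vv x')$, whence
\[
b_t\gt\zu(\vv x_0)(-\vv p\sigma_n,q)^{\T}=\bigl(b_t\gt\,W(\vv x')^{-1}\gt^{-1}b_t^{-1}\bigr)\,\vv w .
\]
Using Lemmas~\ref{LemmaConjugate} and \ref{LemmaConjugate2} together with the identities $e^t\ve^{-1}\ell^2=1$, $e^{t/2}\ve^{-1}\ell=\ve^{-1/2}$ and $e^{t/2}\ell=\ve^{1/2}$, one checks that the conjugated factor has the form $U(O(1))\,U\!\bigl(O(\ve^{-1/2})_{\mathrm{dom}}\bigr)\,Z(O(\ve^{1/2}))$; applied to $\vv w$ it inflates the domain coordinates to size $\ll\varrho\,\ve^{-1/2}$ but leaves the codimension and last coordinates of size $\ll\varrho$. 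Consequently $b_t\gt\zu(\vv x_0)(-\vv p\sigma_n,q)^{\T}$ lies in the symmetric box
\[
\cC:=\Bigl\{\vv v\in\R^{n+1}:\ |v_i|\le A\varrho\ \ (i\ \text{a codimension or the last index}),\quad |v_i|\le A\varrho\,\ve^{-1/2}\ \ (i\ \text{a domain index})\Bigr\},
\]
where $A\ge1$ depends only on $n$ and $\vv f$. (Geometrically this is just the statement that $\vv f(\Delta_t(\vv x_0)\cap B)$ stays within $O(\ve e^{-t})$ of the affine tangent plane of $\cM$ at $\vv f(\vv x_0)$, the deviation of $\mv f$ from that plane over $\Delta_t(\vv x_0)$ being $O(\ell^2)=O(\ve e^{-t})$ --- which is why the side length $\ell=(\ve e^{-t})^{1/2}$ is the correct one.) Since $\vv p\mapsto(-\vv p\sigma_n,q)^{\T}$ is injective and $b_t\gt\zu(\vv x_0)$ is invertible, distinct pairs $(\vv p,q)$ give distinct vectors of $\Lambda_0$ in $\cC$, so $N(\Delta_t(\vv x_0)\cap B;\ve,t)\le\#(\Lambda_0\cap\cC)$.

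Now I would use the Major--arc hypothesis. Because the codimension and last semiaxes of $\cC$ are the short ones, the inradius of $\cC$ is $A\varrho$; and $\vv x_0\in\fM'(\ve,t)$ means exactly that $\lambda_{n+1}(\Lambda_0)\le\varrho$. Hence every successive minimum of $\Lambda_0$ with respect to the convex body $\cC$ is $\le\lambda_{n+1}(\Lambda_0)/(A\varrho)\le 1/A\le1$, and Minkowski's second theorem (recall $\Lambda_0$ is unimodular) gives
\[
\#(\Lambda_0\cap\cC)\ \ll_n\ \vol(\cC)\ \asymp\ \varrho^{\,n+1}\ve^{-d/2}\ =\ \bigl(\ve^{n}e^{t}\bigr)\,e^{dt/2}\ve^{-d/2}\ =\ \ve^{n}e^{t}\,(\ve e^{-t})^{-d/2},
\]
where we used $\phi^{\,n+1}=\ve^{n}e^{t}$ from \eqref{eq:phit}. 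Together with the previous paragraph this is the assertion of the lemma, with implied constant depending only on $n$ and $\vv f$.

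The one step requiring genuine care is the conjugation bookkeeping in the second paragraph --- checking that re--basing at $\vv x_0$ inflates the domain coordinates by a factor at most $\ve^{-1/2}$ while keeping the codimension and height coordinates $O(\varrho)$. This is what makes $\vol(\cC)\asymp\varrho^{\,n+1}\ve^{-d/2}$ agree with the target and, decisively, makes the inradius of $\cC$ equal to the very quantity $\varrho=\phi\,e^{\frac{dt}{2(n+1)}}$ entering the definition of $\fM_0(\ve,t)$, so that the bound $\lambda_{n+1}(\Lambda_0)\le\varrho$ provided by the Major--arc condition is exactly strong enough. (It also pins down the role of the $(\ve e^{-t})^{1/2}$--hypercubes in Lemma~\ref{lm:counting-reduction}: a larger side would enlarge the domain inflation, hence $\vol(\cC)$, beyond the target, while a smaller one would fail to cover $B$ with the right number of pieces.)
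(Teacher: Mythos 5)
Your proof is correct and follows essentially the same route as the paper: transport the lattice point from $\vv x$ to $\vv x_0$ via Lemma~\ref{lm:conjugation} and the conjugation identities, trap $b_t\gt\zu(\vv x_0)(-\vv p\sigma_n,q)^{\T}$ in a box with short semi-axes $\asymp\phi e^{\frac{dt}{2(n+1)}}$ and domain semi-axes inflated by $\ve^{-1/2}$, and then invoke the Major-arc bound $\lambda_{n+1}\le\phi e^{\frac{dt}{2(n+1)}}$ to count lattice points by the volume of the box. The only (immaterial) difference is the final counting step, where you use successive minima and Minkowski's second theorem while the paper argues via a fundamental domain of small diameter contained in a dilate of the box.
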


\begin{proof}
Let us assume that $N(\Delta_t(\vv x_0)\cap B;\ve, t )\neq0$ as otherwise there is nothing to prove. Take any $(\vv p,q) \in \cR(\Delta_t(\vv x_0)\cap B;\ve, t)$. By definition, there exists $\vv x \in \Delta_t(\vv x_0)\cap B$ such that
$$
\left\|\vv f(\vv x)-\frac{\vv p}{q} \right\|_{\infty} <  \frac{\ve}{e^t}\,.
$$
By Lemma~\ref{lm:psi-approximable}, we have that
\begin{equation}\label{eqn5.14}
\| \gt \zu(\vv x) (-\vv p\sigma_n, q)\| \le c_1 \phi.
\end{equation}
Since $\vv x \in \Delta_t(\vv x_0)$, we have that
\[
\vv x_0 = \vv x + (\ve e^{-t})^{\frac12} \vv x'
\]
with $\|\vv x'\| \le 1$. Since $\vv x,\vv x_0\in B\subset\cU$, the line segment joining $\vv x_0$ and $\vv x$ is contained in $\cU$. Then, by Lemma \ref{lm:conjugation}, we have that
\[
\zu(\vv x_0) = Z\left(O\left((\ve e^{-t})^{\frac12}\right)\right) U\left(O(\ve e^{-t})\right) U\left((\ve e^{-t})^{\frac12}\vv x'\right) \zu(\vv x). \]
By \eqref{eq:conju-g-u} and \eqref{eq:conju-g-z}, we have
\[ \gt \zu(\vv x_0) = Z\left(O\left( (\ve e^{-t})^{\frac12}\right)\right) U(O(1))  U\left((\ve e^{-t})^{-\frac12}\vv x'\right) \gt \zu(\vv x). \]
Therefore,
\begin{align*}
  \gt & \zu(\vv x_0) (-\vv p\sigma_n, q)= \\[1ex]
    &=  Z\left(O\left( (\ve e^{-t})^{\frac12}\right)\right) U(O(1))  U\left((\ve e^{-t})^{-\frac12}\vv x'\right) \gt \zu(\vv x) (-\vv p\sigma_n, q).\end{align*}
Let us denote
\begin{equation}\label{eqn5.15}
\gt \zu(\vv x) (-\vv p\sigma_n, q) = \vv v = (v_n, \dots, v_1, v_0).
\end{equation}
Then, by the above calculation, we get that
\begin{align}
\nonumber  \gt \zu(\vv x_0) (-\vv p\sigma_n, q) &= Z\left(O\left( (\ve e^{-t})^{\frac12}\right)\right) U(O(1))  U\left((\ve e^{-t})^{-\frac12}\vv x'\right) \vv v \\[2ex]
                             &= Z\left(O\left( (\ve e^{-t})^{\frac12}\right)\right) U(O(1)) \vv v',\label{eqn5.16}
\end{align}
where
\[ \vv v' = \left(v_n, \dots, v_{d+1}, v_d + (\ve e^{-t})^{-\frac12} x'_d v_0, \dots, v_1 +  (\ve e^{-t})^{-\frac12} x'_1 v_0, v_0 \right).\]

By \eqref{eqn5.14} and \eqref{eqn5.15}, we have that $\|\vv v\| \le c_1 \phi$. Furthermore, since $0<q<e^t$, we get that $|v_0| = \phi e^{-t} q \le \phi$. Therefore, using $\|\vv x'\|\le 1$ we get that
\begin{equation}\label{eqn5.17}
\vv v' \in [c_1 \phi]^m \times [(c_1+1)\phi(\ve e^{-t})^{-\frac12}]^d \times [ \phi ],
\end{equation}
where $[a]$ denotes the closed interval $[-a,a]$. After considering the action of $Z(O( (\ve e^{-t})^{\frac12})) U(O(1))$ on $\vv v'$, we get from \eqref{eqn5.16} and \eqref{eqn5.17} that
\[ \gt \zu(\vv x_0) (-\vv p\sigma_n, q) \in [c_4 \phi]^{m} \times [c_4 \phi (\ve e^{-t})^{-\frac12} ]^d \times [ c_4 \phi ], \]
 for some constant $c_4 >0$ depending on $n$ and $\vv f$ only.
Then it is easy to verify that
\[ b_t \gt \zu(\vv x_0) (-\vv p\sigma_n, q) \in [c_4 \phi e^{h} ]^m \times [ c_4 \phi \ve^{-1/2} e^{h } ]^d \times [ c_4 \phi e^{h}], \]
where $h = dt/2(n+1)$.
Let us denote
$$\Omega = [c_4 \phi e^{h} ]^m \times [ c_4 \phi \ve^{-1/2} e^{h } ]^d \times [ c_4 \phi e^{h}].$$
Then
$$(-\vv p\sigma_n, q) \in \Omega \cap b_t \gt \zu(\vv x_0)\Z^{n+1} \subset (c_6 \Omega) \cap b_t \gt \zu(\vv x_0)\Z^{n+1} $$
for any $c_6 >1$. On the other hand, since $\vv x_0 \in \fM'(\ve,t)$, we have that
\[ \lambda_{n+1}( b_t \gt \zu(\vv x_0)\Z^{n+1}) \le \phi e^h.  \]
This implies that there exists a constant $c_6>1$ such that $c_6\Omega$ contains a full fundamental domain of
$b_t \gt \zu(\vv x_0)\Z^{n+1}$. Therefore,
\[ \# ((c_6\Omega )\cap  b_t \gt \zu(\vv x_0)\Z^{n+1}) \ll \cL_{n+1}(c_6 \Omega) \asymp \phi^{n+1} \ve^{-d/2} e^{(n+1)h},\]
which implies the desired estimate.
\end{proof}

\bigskip

\noindent{\it Acknowledgements.} LY wishes to thank the University of York for its hospitality during his visit in January 2020 when a significant part of this work was done. The authors are grateful to Sam Chow and the anonymous reviewer for their helpful comments.


\end{document}